\renewcommand{\leq}{\leqslant}
\renewcommand{\geq}{\geqslant}
\newcommand{\vp}{{\phi}}
\newcommand{\bw}{{\beta_{\mathrm{CW}}^*(p)}}
\newcommand{\sa}{{\mathcal{C}}}
\newcommand{\p}{{\mathbb{P}}}
\newcommand{\e}{{\mathbb{E}}}
\newcommand{\bs}{{\bm X}}
\newcommand{\cs}{{\bar{X}_N}}
\newcommand{\bt}{{\bm x}}
\title[Estimation in Tensor Ising Models]{Estimation in Tensor Ising Models}
\author[Mukherjee]{Somabha Mukherjee} 
\address{Department of Statistics, University of Pennsylvania, Philadelphia, USA, {\tt somabha@wharton.upenn.edu}}
\author[Son]{Jaesung Son}
\address{Department of Statistics, Columbia University, New York, USA, {\tt  js4638@columbia.edu }}
\author[Bhattacharya]{Bhaswar B. Bhattacharya}
\address{Department of Statistics, University of Pennsylvania, Philadelphia, USA, {\tt bhaswar@wharton.upenn.edu}}
\begin{document}

\begin{abstract} 
The $p$-tensor Ising model is a one-parameter discrete exponential family for modeling dependent binary data, where the sufficient statistic is a multi-linear form of degree $p \geq 2$. This is a natural generalization of the matrix Ising model, that provides a convenient mathematical framework for capturing, not just pairwise, but higher-order dependencies in complex relational data. In this paper, we consider the problem of estimating the natural parameter of the $p$-tensor Ising model given a single sample from the distribution on $N$ nodes. Our estimate is based on the maximum pseudo-likelihood (MPL) method, which provides a  computationally efficient algorithm for estimating the parameter that avoids computing the intractable partition function. We derive general conditions under which the  MPL estimate is $\sqrt N$-consistent, that is, it converges to the true parameter at rate $1/\sqrt N$. Our conditions are robust enough to handle a variety of commonly used tensor Ising models, including spin glass models with random interactions and models where the rate of estimation undergoes a   phase  transition. In particular, this includes results on $\sqrt N$-consistency of the MPL estimate in the well-known $p$-spin Sherrington-Kirkpatrick (SK) model, spin systems on general $p$-uniform hypergraphs, and Ising models on the hypergraph stochastic block model (HSBM). In fact, for the HSBM we pin down the exact location of the phase transition threshold, which is determined by the positivity of a certain mean-field variational problem, such that above this threshold the MPL estimate is $\sqrt N$-consistent, while below the threshold no estimator is consistent. Finally, we derive the precise fluctuations of the MPL estimate in the special case of the $p$-tensor Curie-Weiss model, which is the Ising model on the complete $p$-uniform hypergraph. An interesting consequence of our results is that the MPL estimate in the Curie-Weiss model saturates the Cramer-Rao lower bound at all points above the estimation threshold, that is, the MPL estimate incurs no loss in asymptotic statistical efficiency in the estimability regime, even though it is obtained by minimizing only an approximation of the true likelihood function for computational tractability. 
\end{abstract}


\keywords{Markov random fields, parameter estimation, hypergraphs, phase transitions, pseudo-likelihood, spin systems.}

	\maketitle

\section{Introduction}

The Ising model is a discrete exponential family with binary outcomes, where the sufficient statistic involves a quadratic term designed to capture correlations arising from pairwise interactions.  This was originally developed in statistical physics to model ferromagnetism \cite{ising}, and has since then found applications in various diverse fields, such as spatial statistics, social networks, computer vision, neural networks, and computational biology \cite{spatial,cd_trees,geman_graffinge,disease,neural,innovations}.  The increasing popularity of the Ising model as a foundational tool for understanding nearest-neighbor interactions in network data, has made it imperative to develop computationally tractable algorithms for learning the model parameters and understanding their rates of convergence (statistically efficiencies).  In particular, we are interested in estimating the parameters of the model given a single sample of binary outcomes from an underlying network. This problem was classically studied when the  underlying network was a spatial lattice, where consistency and optimality of the maximum likelihood (ML) estimates were derived~\cite{comets_exp,gidas,guyon,discrete_mrf_pickard}. However, for general networks, parameter estimation using the  ML method turns out to be notoriously hard due to the appearance of an intractable normalizing constant in the likelihood. To circumvent this issue, Chatterjee \cite{chatterjee} proposed using the maximum pseudolikelihood (MPL) estimator \cite{besag_lattice,besag_nl}, which is a computationally efficient algorithm for estimating the parameters of a  Markov random field, that maximizes an approximation to the likelihood function (a `pseudo-likelihood') based on conditional distributions. This method and results in \cite{chatterjee} were later generalized in \cite{BM16} and \cite{pg_sm} to obtain rates of estimation for Ising models on general weighted graphs and joint estimation of parameters, respectively. These techniques were recently  used in Daskalakis et al. \cite{cd_ising_II,cd_ising_I} to obtain rates of convergence of the MPLE in general logistic regression models with dependent observations. Very recently, Dagan et al. \cite{cd_ising_estimation} considered the problem of parameter estimation in a more general model where the binary outcomes can be influenced by various underlying networks, and, as a consequence, improved some of the results in \cite{BM16}. Related problems in hypothesis testing given a single sample from an Ising model are considered in \cite{gb_testing,ising_testing,rm_sm}.

In many situations, both in modeling real-world network data and  interacting spin systems, dependencies arise not just from pairs, but from interactions between groups of particles or individuals. This leads to the study of higher-order Ising models, specifically, the $p$-tensor Ising model, where the sufficient statistic is a multilinear polynomial of degree $p\geq 2$, for capturing higher-order interactions between the different particles. These models can be represented as a spin system on a $p$-uniform hypergraph, where the individual entities represent the vertices of the hypergraph and the $p$-tuples of interactions are indexed by the hyperedges.  More formally, given a vector of binary outcomes $\bm X := (X_1,\ldots,X_N) \in \cC_N:=\{-1, 1\}^N$ and a $p$-tensor $\bm J_N = ((J_{i_1 \ldots i_p}))_{1 \leq i_1 \ldots i_p \leq N}$, the $p$-tensor Ising model is a discrete exponential family with probability mass function: 
\begin{equation}\label{model}
\p_{\beta,p}(\bm X) = \frac{1}{2^N Z_N(\beta,p)} e^{\beta H_N(\bm X)}  ,
\end{equation}
where the sufficient statistic (Hamiltonian)
\begin{equation}\label{eq:HN}
H_N(\bm X) := \sum_{1\leq i_1,\ldots,i_p \leq N} J_{i_1\ldots i_p} X_{i_1}\ldots X_{i_p}, 
\end{equation} 
and $\beta \geq 0$ is the {\it natural parameter} (referred to as the {\it inverse temperature} in statistical physics) of the model. The  {\it normalizing constant} $Z_N(\beta,p)$ (also referred to as the {\it partition function}) is determined by the condition $\sum_{\bm X \in \mathcal{C}_N} \p_{\beta,p}(\bm X) = 1$, that is,  
\begin{equation*}
Z_N(\beta,p) = \frac{1}{2^N} \sum_{\bm X \in \mathcal{C}_N} \exp\left\{ \beta \sum_{1\leq i_1,\ldots,i_p \leq N} J_{i_1\ldots i_p} X_{i_1}\ldots X_{i_p} \right\}
\end{equation*} 
We will denote by $F_N(\beta,p) := \log Z_N(\beta,p)$ the {\it log-partition} function of the model. Higher-order Ising models  arise naturally in the study of multi-atom interactions in lattice gas models, such as the square-lattice eight-vertex model, the Ashkin-Teller model, and Suzuki's pseudo-3D anisotropic model  (cf.~  \cite{ab_ferromagnetic_pspin,multispin_simulations,ferromagnetic_mean_field,ising_partition_approximation,pspinref1,ising_suzuki,ising_general,turban,pspinref2} and the references therein).  More recently, higher-order spin systems  have also been proposed for modeling peer-group effects in social networks \cite{cd_ising_II}.

Hereafter, unless mentioned otherwise, we will assume that the tensor $\bm J_N$ satisfies the following two properties: 
\begin{itemize} 

\item[(1)] The tensor $\bm J_N$ is {\it symmetric}, that is, $J_{i_1\ldots i_p} = J_{i_{\sigma(1)}\ldots i_{\sigma(p)}}$ for every $1\leq i_1<\cdots<i_p\leq N$ and every permutation $\sigma$ of $\{1,\ldots,p\}$,  and 

\item[(2)] The tensor $\bm J_N$ has zeros on the `diagonals', that is, $J_{i_1\ldots i_p} = 0$, if $i_s = i_t$ for some $1\leq s<t\leq p$.

\end{itemize} 
In this paper, we consider the problem of estimating the parameter $\beta$ given a single sample $\bm X = (X_1, X_2, \ldots, X_n)$ from the $p$-tensor Ising model \eqref{model}. Extending the results of Chatterjee \cite{chatterjee} on MPL estimation in matrix ($p=2$) Ising models, we obtain a general theorem which gives conditions under which the MPL estimate is $\sqrt N$-consistent in the $p$-tensor Ising model, for any $p \geq 3$.\footnote{A sequence of estimators $\{\hat{\beta}_N\}_{N\geq 1}$ is said to be {\it consistent} at $\beta$, if $\hat{\beta}_N \pto \beta$ under $\P_{\beta}$, that is, for every $M > 0$, $\P_{\beta}(|\hat \beta_N(\bm X) - \beta| \leq M ) \rightarrow 1$ as $N \rightarrow \infty$. Moreover, a sequence of estimators $\{\hat{\beta}_N\}_{N\geq 1}$ is said to be $\sqrt N$-{\it consistent} at $\beta$, if for every $\delta > 0$, there exists $M:=M(\delta, \beta) > 0$ such that $\P_{\beta}(\sqrt N |\hat \beta_N(\bm X) - \beta| \leq M ) > 1-\delta$, for all $N$.} The main bottleneck in extending the results from the matrix to the tensor case, is the lack of a natural spectral condition that is strong enough to control the fluctuations of the MPL function, but still verifiable in natural examples. To this end, we introduce the notion of a {\it local interaction matrix} which, given a configuration $\bm x \in \{-1, 1\}^n$,  measures the strength of the interaction between pairs of vertices (Definition \ref{defn:JN_interaction}). Our result shows that the MPL estimate is $\sqrt N$-consistent, whenever we have an appropriate moment bound on the local interaction matrix, and if the normalized log-partition function stays bounded away from zero (Theorem \ref{chextension}). We illustrate the robustness and generality of our result by verifying the conditions of the theorem in various commonly studied tensor Ising models. This includes the $\sqrt N$-consistency of the MPL estimate in the well-known $p$-spin Sherrington-Kirkpatrick (SK) model \cite{bovier,panchenko_book} (Corollary \ref{skthreshold}), and in Ising models on $p$-uniform hypergraphs under appropriate conditions on the adjacency tensors (Corollary \ref{boundeddeg}). The latter is also related to the recent work of Daskalakis et al. \cite{cd_ising_II}, where a general model  for logistic regression with dependent observations using higher-order Ising models was proposed, which includes as a special case the model in \eqref{model}. However, the conditions in \cite{cd_ising_II} are based directly on the interaction tensor, hence, cannot handle models where the rate of  estimation undergoes a phase transition. This is understandable because  \cite{cd_ising_II} considered the problem of jointly estimating multiple parameters in a more general model, hence, stronger assumptions were necessary for ensuring consistency. Our goal, on the other hand, is to pin down the precise conditions necessary for estimating the single parameter $\beta$ and develop methods for verifying those conditions in natural examples. To this end, our general theorem recovers as a corollary, the results in \cite{cd_ising_II} when specialized to the model \eqref{model}.  More importantly, our results can handle models where the rate of estimation has phase transitions, which happens whenever the underlying hypergraph becomes dense.  To illustrate this phenomenon we consider the Ising model on a hypergraph stochastic block model (HSBM), a natural generalization of the widely studied (graph) stochastic block model, that serves as a natural model for capturing higher-order relational data \cite{hypergraph_learning,hypergraph_image,hypergraph_multimedia,hypergraph_gene}.  In this case, we show there is a critical value $\beta_{\mathrm{HSBM}}^*$, such that if $\beta > \beta_{\mathrm{HSBM}}^*$ then the MPL estimate is $\sqrt N$-consistent, while if $\beta < \beta_{\mathrm{HSBM}}^*$ there is no consistent estimator for $\beta$ (Theorem \ref{sbmthr}). While it is relatively straightforward to show the $\sqrt N$-consistency of the MPL estimate above the threshold using our general theorem, proving that estimation is impossible below the threshold is more challenging. This is one of the technical highlights of the paper, which requires careful combinatorial estimates that go beyond the standard mean-field approximation techniques. 

Next, we consider the special case of the $p$-tensor Curie-Weiss model, which is the Ising model on the complete $p$-uniform hypergraph. Here, using the special structure of the interaction tensor we are able to obtain the exact limiting distribution of the MPL estimate for all points above the estimation threshold (Theorem \ref{thm:cwmplclt}). In fact, in this regime the asymptotic variance of the MPL estimate saturates the Cramer-Rao lower bound, that is, the MPL estimate attains the best asymptotic variance among the class of consistent estimates. Finally, we derive the asymptotic distribution of the MPL estimate in the $p$-tensor Curie-Weiss model at the estimation threshold (this is the point below which consistent estimation is impossible and above which the MPL estimate is $\sqrt N$-consistent). Interestingly, at the threshold the asymptotics of the MPL estimate depend on the value of $p$. In particular, the MPL estimate is $\sqrt N$-consistent (with a non-Gaussian limiting distribution) when $p=2$, but inconsistent for $p \geq 3$.  The formal statements of the results and their various consequences are given below in Section \ref{sec:statements}.

\subsection{Related Work on Structure Learning}

The problem of structure learning in Markov random fields is another related area of active research, where the goal is to estimate the underlying graph structure given access to {\it multiple} i.i.d. samples from an Ising model, or a more general graphical model. For more on these results refer to  \cite{structure_learning,bresler,discrete_tree,graphical_models_algorithmic,highdim_ising,graphical_models_binary} and the references therein. Recently, Daskalakis et al. \cite{cd_testing} studied the related problems of identity and independence testing, and Neykov et al. \cite{high_tempferro,neykovliu_property} considered problems in graph property testing,  given access to multiple samples from an Ising model.  All these results, however, are in contrast with the present work, where the underlying graph structure is assumed to be known and the goal is to  estimate the natural parameters given a {\it single} sample from the model. This is motivated by the applications mentioned above where it is often difficult, if not impossible, to generate many independent samples from the model within a reasonable amount of time.

\section{Statements of the Main Results}\label{sec:statements} 

In this section we state our main results. The general result about the $\sqrt N$-consistency of the MPL estimate in tensor Ising models is discussed in Section \ref{sec:2.1}. Applications of this result to the  $p$-spin   SK model and Ising models on various hypergraphs are discussed in Section \ref{sec:2.2}.  Finally, in Section \ref{cltsec} we obtain the limiting distribution of the MPL estimate in the $p$-spin   Curie-Weiss model. Hereafter, we will often omit the dependence on $p$ and abbreviate $\p_{\beta,p}$ $Z_N(\beta,p)$, and $F_N(\beta,p)$ by $\p_\beta$, $Z_N(\beta)$, and $F_N(\beta)$, respectively, when there is no scope of confusion.

\subsection{Rate of Consistency of the MPL Estimator}\label{sec:2.1}

The maximum pseudo-likelihood (MPL) method, introduced by Besag \cite{besag_lattice,besag_nl}, provides a way to conveniently approximate the joint distribution of $\bm X \sim \P_{\beta, p}$ that avoids calculations with the normalizing constant. 

\begin{defn}\cite{besag_lattice,besag_nl} {\em Given a discrete random vector $\bm X= (X_1, X_2, \ldots, X_N)$ whose joint distribution is parameterized by a parameter $\beta \in \R$,  the MPL estimate of $\beta$ is defined as
\begin{equation*}
\hat {\beta}_N(\bm X):=\arg\max_{\beta \in \R}\prod_{i=1}^N f_i(\beta, \bm X),
\end{equation*}
where $f_i(\beta, \bm X)$ is the conditional probability mass function of $X_i$ given $(X_j)_{j \ne i}$. }
\end{defn}

To compute the MPL estimate in the $p$-tensor Ising model \eqref{model}, fix $\beta > 0$ and consider $\bm X \sim \P_{\beta}$. Then from  \eqref{model}, the conditional distribution of $X_i$ given $(X_j)_{j\neq i}$ can be easily computed as: 
\begin{equation}\label{eq:conditional}
\P_{\beta}\left(X_i\big|(X_j)_{j\neq i}\right) = \frac{e^{p \beta  X_i m_i(\bm X) }}{ e^{p \beta  m_i(\bm X)  } + e^{-p \beta  m_i(\bm X)  } },
\end{equation}
where $m_i(\bm X) := \sum_{1\leq i_2,\ldots,i_p \leq N} J_{i i_2 \ldots i_p} X_{i_2}\cdots X_{i_p}$, is the local effect at the node $1 \leq i \leq N$ (often referred to as the local magnetization of the vertex $i$ in the statistical physics literature).  Then the pseudolikelihood estimate of $\beta$ (as defined in \eqref{model}) in the $p$-tensor Ising model \eqref{eq:conditional} is obtained by maximizing the function below, with respect to $b$,
\begin{align*}
L(b|\bm X) := \prod_{i=1}^N \P_{b}\left(X_i\big|(X_j)_{j\neq i}\right) & = \frac{1}{2^N} \exp\left\{\sum_{i=1}^N \left\{ p b   X_i m_i(\bm X) -  \log \cosh\left(p b  m_i(\bm X) \right) \right\} \right\} .
 \end{align*}
Now, since $\log L(b|\bm X)$ is concave in $b$, the MPL estimator $\hat{\beta}_N(\bm X)$ can be obtained by solving the gradient equation $\frac{\partial \log L(b|\bm X)}{\partial b} = 0$, which simplifies to 
\begin{equation}\label{solution}
H_N(\bm X) -  \sum_{i=1}^N m_i(\bm X) \tanh\left(p b m_i(\bs)\right) = 0.
\end{equation} 
To ensure well-definedness, in case \eqref{solution} does not have a solution or has more than one solution, the MPL estimate $\hat{\beta}_N(\bm X)$ is more formally defined as: 
 \begin{equation}\label{mple}
 \hat{\beta}_N(\bm X):= \inf\left\{b \geq 0 : H_N(\bm X) =  \sum_{i=1}^N m_i(\bm X) \tanh\left( p b m_i(\bs)\right)\right\},
 \end{equation}
where the infimum of an empty-set is defined to be $+\infty$. Note that the expression in the RHS of the equality in \eqref{mple} is an increasing function of $t$, hence $\hat{\beta}_N(\bm X)$ can be very easily computed by the Newton-Raphson method or even a simple grid search.

Our first result is about the rate of consistency of the MPL estimate in general tensor Ising models. In particular,  we show in the proposition below that the MPL estimate $\hat \beta_N(\bm X)$, based on a single sample $\bm X\sim \P_{\beta}$ converges to the true parameter $\beta$ at rate $1/\sqrt N$, whenever the interaction tensor $\bm J_N$ satisfies a certain spectral-type condition and the log-partition function is $\Omega(N)$\footnote{For positive sequences $\{a_n\}_{n\geq 1}$ and $\{b_n\}_{n\geq 1}$, $a_n = O(b_n)$ means $a_n \leq C_1 b_n$, $a_n =\Omega(b_n)$ means $ a_n \geq C_2 b_n$, and  $a_n = \Theta(b_n)$ means $C_1 b_n \leq a_n \leq C_2 b_n$, for all $n$ large enough and positive constants $C_1, C_2$. Moreover, subscripts in the above notation,  for example $O_\square$, denote that the hidden constants may depend on the subscripted parameters.} at the true parameter value. To state our result formally, we need the following definition:

\begin{defn}\label{defn:JN_interaction} {\em Given a $p$-tensor $\bm J_N=((J_{i_1 i_2\ldots i_p}))_{1 \leq i_1, i_2, \ldots, i_p \leq N}$ and $\bm x = (x_1, x_2, \ldots, x_N) \in \cC_N$, define the {\it local interaction matrix} of $\bm J_N$ at the point $\bm x$ as the $N \times N$ matrix 
$\bm J_N(\bt) := ((J_{i_1 i_2}(\bt)))_{1\leq i_1, i_2 \leq N}$, where the entries are given by: 
\begin{align}\label{eq:Jx}
J_{i_1 i_2}(\bt) := \sum_{1\leq i_3,\ldots,i_p \leq N}J_{i_1 i_2 i_3\ldots i_p}x_{i_3}\cdots x_{i_p}. 
\end{align} 
(Note that in the case $p=2$, $J_{i_1 i_2}(\bt) =J_{i_1 i_2}$, that is, the  local interaction matrix $\bm J_N(\bm x)$  is same as the interaction matrix $\bm J_N$, for all $\bm x \in \cC_N$.) }
\end{defn}

We are now ready to state our result on the convergence rate of the MPL estimate in a tensor Ising model.\footnote{For a vector $\bm v \in \R^N$, $\|\bm v\|$ will denote the Euclidean norm of $\bm v$. Moreover, for a $N \times N$ matrix $A$, $\|A\| := \sup_{\|\bm x\|=1} \|A\bm x\|$ denotes the operator norm of $A$.} 

\begin{thm}\label{chextension}
Fix $p \geq 2$, $\beta > 0$ and a sequence of $p$-tensors $\{\bm J_N\}_{N \geq 1}$ such that the following two conditions hold: 
\begin{enumerate}
\item[$(1)$] $\sup_{N\geq 1}\E_{\beta}[ \|\bm J_N(\bm Z)\|^4] <\infty$, where the expectation is taken with respect to $\bm Z \sim \P_{\beta}$, 
\item[$(2)$] $\liminf_{N\rightarrow \infty} \frac{1}{N}F_N(\beta) > 0$.
\end{enumerate}
Then given a single sample $\bm X$ from the model \eqref{model} with interaction tensor $\bm J_N$, the MPL estimate $\hat{\beta}_N(\bm X)$, as defined in \eqref{mple}, is $\sqrt N$-consistent for $\beta$, that is, for every $\delta > 0$, there exists $M:=M(\delta, \beta) > 0$ such that $$\P_{\beta}(\sqrt N |\hat \beta_N(\bm X) - \beta| \leq M ) > 1-\delta,$$ for all $N$ large enough.  
\end{thm}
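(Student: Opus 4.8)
The plan is the usual implicit‑function/mean‑value analysis of an $M$‑estimator, applied to the (rescaled) pseudo‑likelihood score
\[ g_N(b) \;:=\; \frac1N\,\frac{\partial}{\partial b}\log L(b\mid\bm X)\;=\;\frac{p}{N}\sum_{i=1}^N m_i(\bm X)\bigl(X_i-\tanh(pb\,m_i(\bm X))\bigr), \]
which is non‑increasing in $b$ with $g_N'(b)=-\tfrac{p^2}{N}\sum_i m_i(\bm X)^2\operatorname{sech}^2(pb\,m_i(\bm X))$, and whose smallest zero is $\hat\beta_N(\bm X)$ whenever a zero exists (this is just a rewriting of \eqref{mple}, since $H_N(\bm X)=\sum_i m_i(\bm X)X_i$). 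By the mean value theorem, on the event a zero exists there is $\xi_N$ between $\beta$ and $\hat\beta_N$ with $g_N(\beta)=\bigl(\tfrac{p^2}{N}\sum_i m_i(\bm X)^2\operatorname{sech}^2(p\xi_N m_i(\bm X))\bigr)(\hat\beta_N-\beta)$. Hence the theorem reduces to: \textbf{(I)} $\sqrt N\,g_N(\beta)=O_{\P_\beta}(1)$; \textbf{(II)} there exist $c_\ast,\eta>0$ with $\P_\beta\bigl(\tfrac1N\sum_i m_i(\bm X)^2\operatorname{sech}^2(pb\,m_i(\bm X))\ge c_\ast\ \text{for all}\ b\in[\beta-\eta,\beta+\eta]\bigr)\to1$; and \textbf{(III)} $\P_\beta(\hat\beta_N(\bm X)=+\infty)\to0$. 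On the intersection of the events in (I)--(III), with the $O_{\P_\beta}(1)$ in (I) replaced by a constant $M=M(\delta)$ making the intersection have probability $\ge1-\delta$, monotonicity of $g_N$ confines $\xi_N$ to $[\beta-\eta,\beta+\eta]$ once $N$ is large, so $\sqrt N|\hat\beta_N-\beta|\le M/(p^2c_\ast)$, which is the assertion.

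For (I), \eqref{eq:conditional} gives $\E_\beta[X_i\mid(X_j)_{j\ne i}]=\tanh(p\beta\,m_i(\bm X))$, so $\E_\beta[g_N(\beta)]=0$ and the point is the variance bound $\E_\beta[T^2]=O(N)$, where $T:=\sum_i m_i(\bm X)(X_i-\tanh(p\beta m_i(\bm X)))$. Conditioning on $(X_j)_{j\ne i}$ and using the conditional‑mean identity gives $\E_\beta[T^2]=\sum_i\E_\beta\bigl[m_i(\bm X)\operatorname{sech}^2(p\beta m_i(\bm X))\,\widetilde T_i\bigr]$, where $\widetilde T_i$ is the discrete derivative of $T$ in the $i$‑th coordinate. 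Writing $T=H_N(\bm X)-\sum_j\psi(m_j(\bm X))$ with $\psi(t):=t\tanh(p\beta t)$, and noting that flipping $X_i$ changes $H_N(\bm X)$ by $2pX_i m_i(\bm X)$ and each $m_j(\bm X)$ by $\pm2(p-1)J_{ij}(\bm X)$, a Taylor expansion yields $\widetilde T_i=p\,m_i(\bm X)-(p-1)\sum_{j\ne i}\psi'(\theta_{ij})J_{ij}(\bm X)$ for suitable intermediate points $\theta_{ij}$. Using $|m_i(\bm X)\operatorname{sech}^2(p\beta m_i(\bm X))|\le C$, boundedness and Lipschitzness of $\psi'$, $\sum_i m_i(\bm X)^2=\|\bm J_N(\bm X)\bm X\|^2\le N\|\bm J_N(\bm X)\|^2$, and the fact that every row of $\bm J_N(\bm X)$ has Euclidean norm at most $\|\bm J_N(\bm X)\|$, a Cauchy--Schwarz argument over $i$ bounds $\E_\beta[T^2]$ by $O\bigl(N\,\E_\beta[\,\|\bm J_N(\bm X)\|^2+\|\bm J_N(\bm X)\|^4\,]\bigr)$; the fourth power is exactly the cost of the Lipschitz remainder of $\psi'$, which is genuinely present for $p\ge3$. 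Condition (1) then gives $\E_\beta[T^2]=O(N)$, and (I) follows by Chebyshev.

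For (II), I would first use condition (2) to show $H_N(\bm X)$ is macroscopic, which is almost immediate: fixing $0<c_1<\tfrac1\beta\liminf_N\tfrac1N F_N(\beta)$,
\[ \P_\beta\bigl(H_N(\bm X)\le c_1N\bigr)=\frac{1}{2^N Z_N(\beta)}\sum_{\bm x:\,H_N(\bm x)\le c_1N}e^{\beta H_N(\bm x)}\;\le\;\frac{2^N e^{\beta c_1N}}{2^N Z_N(\beta)}=e^{\beta c_1N-F_N(\beta)}\longrightarrow0 \]
exponentially. Since $H_N(\bm X)=\sum_i m_i(\bm X)X_i$ and $T=O_{\P_\beta}(\sqrt N)$ by (I), this forces $\sum_i m_i(\bm X)\tanh(p\beta m_i(\bm X))\ge\tfrac{c_1}{2}N$ with probability tending to $1$. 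On this event, together with $\{\|\bm J_N(\bm X)\|\le B\}$ (which has probability $\ge1-\delta$ for $B$ large, by condition (1)), I split according to $S:=\{i:|m_i(\bm X)|\le K\}$: from $m_i(\bm X)\tanh(p\beta m_i(\bm X))\le\min\{p\beta\,m_i(\bm X)^2,\,|m_i(\bm X)|\}$ and $\sum_{i\notin S}|m_i(\bm X)|\le\tfrac1K\sum_i m_i(\bm X)^2\le\tfrac{NB^2}{K}$, taking $K$ large forces $\sum_{i\in S}m_i(\bm X)^2\ge c'N$; and for $i\in S$ one has $\operatorname{sech}^2(pb\,m_i(\bm X))\ge\operatorname{sech}^2\bigl(p(\beta+\eta)K\bigr)$ uniformly over $b\in[\beta-\eta,\beta+\eta]$, which gives (II). A parallel estimate on the same event — using $1-\tanh t\ge e^{-2t}$ for $t\ge0$ to see that $g_N(b)<0$ for all large $b$ — gives (III), so that $\hat\beta_N$ is genuinely the smallest zero of $g_N$ on the good event.

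The main obstacle is (I), precisely the bound $\E_\beta[T^2]=O(N)$. In the matrix case $p=2$ this is Chatterjee's computation with a \emph{deterministic} local interaction matrix of bounded operator norm. For $p\ge3$ the effective quadratic interaction $\bm J_N(\bm X)$ is random and sample‑dependent, the cross‑correlations among the score terms are mediated by it, and the nonlinearity of $t\mapsto t\tanh(p\beta t)$ produces Taylor remainders whose control requires a fourth‑moment bound on $\|\bm J_N(\bm X)\|$; this is exactly condition (1), and absorbing these remainders is the technical heart of the argument. Everything else is either soft (the change‑of‑measure bound in (II)) or routine bookkeeping.
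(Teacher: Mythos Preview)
Your proposal is correct and follows the same two-step strategy as the paper: a second-moment bound on the score $g_N(\beta)$ (the paper's Lemma~\ref{lm:boundsecondmoment}) combined with strong concavity via the truncated sum $\sum_i m_i^2\mathbf 1\{|m_i|\le K\}=\Omega(N)$ (the paper's Lemma~\ref{derivative}). The technical routes differ only in presentation. For (I), the paper uses the exchangeable-pairs formulation and decomposes $f(\bm X)-f(\bm X^{(t)})=A_t+B_t+C_t$, whereas you condition on $(X_j)_{j\ne i}$ to obtain the Stein-type identity and expand $\widetilde T_i$ directly as $2pm_i-2(p-1)\sum_j\psi'(\theta_{ij})J_{ij}(\bm X)$; these are equivalent computations with different groupings, and your organization in fact avoids some of the higher powers of $\|\bm J_N(\bm X)\|$ that appear in the paper's bound \eqref{thirdterm}. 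For (II), your one-line change-of-measure bound $\P_\beta(H_N\le c_1N)\le e^{\beta c_1N-F_N(\beta)}$ is a clean alternative to the paper's exponential-tilting argument via monotonicity of $F_N'$ (Observation~\ref{obs:Fderivative}); both give the needed exponential decay. Your (III) is automatic on the event $\{H_N\ge c_1N\}$, since then $g_N(0)=pH_N/N>0$ while $\lim_{b\to\infty}g_N(b)=\tfrac{p}{N}(H_N-\sum_i|m_i|)\le(1-p)\tfrac{1}{N}\sum_i|m_i|<0$, so a zero exists; the paper handles this implicitly by working with $\tanh(pM_2\hat\beta_N)$, which is well-defined even at $+\infty$.
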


The proof of this theorem is given in Section \ref{sec:3}. The proof has two main steps: In the first step we use the method of exchangeable pairs to show that the derivative of the log-pseudolikelihood (the LHS of \eqref{solution}) is concentrated around zero at the true model parameter (see Lemma \ref{lm:boundsecondmoment} for details). The proof adapts the method of exchangeable pairs introduced in \cite{chatterjee} where a similar result was proved for matrix (2-spin) Ising models. The main technical challenge as one goes from the matrix to the tensor case, is the absence of a natural spectral condition in tensor models. To this end, we introduce condition (1), which requires that the fourth-moment of the spectral norm of the local interaction matrix is uniformly bounded. This condition allows us to prove the desired concentration of the  log-pseudolikelihood, and, as we will see below, can be easily verified for a large class of natural tensor models. The second step in the proof of Theorem \ref{chextension} is to show that the log-pseudolikelihood is strongly concave, that is, its second derivative is strictly negative with high probability. Here, we use condition (2) to first show that the Hamiltonian is $\Omega(N)$ with high-probability, which then implies the strong concavity of the log-pseudolikelihood by a truncated second-moment argument.\footnote{Recalling the discussion in Definition \ref{defn:JN_interaction}, note that when $p=2$, condition $(1)$ simplifies to $\sup_{N \geq 1}||\bm J_N|| < \infty$, hence Theorem \ref{chextension} recovers Chatterjee's result on $\sqrt N$-consistency of MPL estimates in 2-spin Ising models \cite[Theorem 1.1]{chatterjee}.}

\begin{remark}\label{remark:condition} {\em The $L_4$-condition (condition (1)) on the local interaction matrix in Theorem \ref{chextension} can be replaced by the following stronger $L_\infty$-condition, which is often easier to verify in examples: 
\begin{equation}\label{eq:JNcondition}
\sup_{N\geq 1}\sup_{\bt \in \sa_N} \|\bm J_N(\bt)\| <\infty.
\end{equation} 
Condition \eqref{eq:JNcondition}, hence condition $(1)$ in Theorem \ref{chextension}, is also weaker than the `bounded-degree condition': 
\begin{align}\label{eq:JNcondition_II}
\sup_{1 \leq i_1 \leq N} \sum_{1 \leq i_2, i_3, \ldots, i_p \leq N} |J_{i_1 i_2 i_3 \ldots i_p} | =O(1). 
\end{align} 
In particular, condition \eqref{eq:JNcondition} allows us to handle the $p$-spin Sherrington-Kirkpatrick model, an example where the bounded-degree condition \eqref{eq:JNcondition_II} fails to hold. } 
\end{remark}

\subsection{Applications}\label{sec:2.2}
 
In this section we discuss the consequences of Theorem \ref{chextension} to the $p$-spin SK model (Section \ref{sec:sk}), spin systems of on general hypergraphs (Section \ref{nonstoch}), and the hypergraph stochastic block model (Section \ref{stoch}).

\subsubsection{The $p$-Spin Sherrington-Kirkpatrick Model}
\label{sec:sk}

In the $p$-spin Sherrington-Kirkpatrick (SK) model \cite{bovier}, the interaction tensor is of the form  
\begin{align}\label{eq:JN_sk}
J_{i_1\ldots i_p} = N^{\frac{1-p}{2}} g_{i_1\ldots i_p}, 
\end{align}
where  $(g_{i_1\ldots i_p})_{1\leq i_1<\ldots<i_p< \infty}$ is a fixed realization of a collection of independent standard Gaussian random variables, and $g_{i_1\ldots i_p} = g_{\sigma(i_1)\ldots \sigma(i_p)}$, for any permutation $\sigma$ of $\{1, 2, \ldots, p\}$. This is a canonical example of a spin glass model which has remarkable thermodynamic properties \cite{spinglass_book}. A whole new discipline has emerged from the study of this object, with many beautiful theorems that have unearthed deep connections between diverse areas in mathematics and statistical physics  (cf.~\cite{bovier,sk_optimization,panchenko_book,talagrand_sk} and the references therein). The problem of parameter estimation in the SK model was initiated by Chatterjee \cite{chatterjee}, where $\sqrt N$-consistent of the MPL estimate for all $\beta > 0$ was proved for the 2-spin SK model. The following corollary extends this to all $p \geq 3$.

\begin{cor}\label{skthreshold} In the $p$-spin SK model, the MPL estimate $\hat{\beta}_N(\bm X)$  is $\sqrt{N}$-consistent for all $\beta> 0$. 
\end{cor}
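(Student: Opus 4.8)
The strategy is to verify, for every fixed $\beta > 0$ and almost every realization of the disorder $(g_{i_1\ldots i_p})$, the two hypotheses of Theorem~\ref{chextension} for the $p$-spin SK tensor \eqref{eq:JN_sk}; the corollary is then immediate. Both verifications reduce to Gaussian concentration in the variables $g_{i_1\ldots i_p}$ together with the Borel--Cantelli lemma, since with the disorder fixed all relevant functionals are Lipschitz in the $g$'s.

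I would first check condition $(1)$ in the stronger form \eqref{eq:JNcondition} from Remark~\ref{remark:condition}, namely $\sup_{N\geq 1}\sup_{\bt\in\sa_N}\|\bm J_N(\bt)\| < \infty$ almost surely. By \eqref{eq:Jx} and \eqref{eq:JN_sk}, $\bm J_N(\bt)$ is a symmetric $N\times N$ matrix whose entries are centered, jointly Gaussian linear forms in the $g$'s, and a short variance computation (using $x_i^2 = 1$ and the zero-diagonal convention) shows that for every unit vector $\bm u$, every $\bt\in\sa_N$ and every $N$, the quadratic form $\bm u^{\top}\bm J_N(\bt)\bm u$ is centered Gaussian with variance at most $p!/N$. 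Covering the unit sphere in $\R^N$ by a $\tfrac14$-net $\mathcal{N}$ with $|\mathcal{N}|\leq 9^N$, so that $\|\bm J_N(\bt)\|\leq 2\max_{\bm u\in\mathcal N}|\bm u^{\top}\bm J_N(\bt)\bm u|$, a union bound over the at most $18^N$ pairs $(\bm u,\bt)\in\mathcal N\times\sa_N$ together with the Gaussian tail bound shows that, with probability at least $1-2\exp\!\big(N(\log 18 - C^2/2p!)\big)$ over the disorder, $\sup_{\bt\in\sa_N}\|\bm J_N(\bt)\|\leq C$. Taking $C$ a large enough constant depending only on $p$ makes these probabilities summable in $N$, and Borel--Cantelli finishes. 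This is exactly a case in which the bounded-degree condition \eqref{eq:JNcondition_II} fails, since for the SK tensor $\sup_{i_1}\sum_{i_2,\ldots,i_p}|J_{i_1\ldots i_p}| \asymp N^{(p-1)/2}\to\infty$.

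Next I would verify condition $(2)$, that is, $\liminf_{N\to\infty}\tfrac1N F_N(\beta) > 0$ almost surely. Since $F_N(\cdot)$ is convex with $F_N'(0) = \e_{\mathrm{unif}}[H_N] = 0$ (every contributing index tuple has distinct entries), $F_N$ is nondecreasing in $\beta$, so it is enough to prove the bound for all sufficiently small $\beta$ and then invoke monotonicity. For small $\beta$ I would use the second-moment method: from $\e_g[Z_N(\beta)] = \exp(\tfrac{\beta^2}{2}v_N)$ with $v_N = p!\,N + O(1)$, one finds $\e_g[Z_N(\beta)^2]/(\e_g[Z_N(\beta)])^2 = e^{O(\beta^2)}\,\e_{\mathrm{unif}^{\otimes 2}}\!\big[\exp(\beta^2 p!\,N R_N^p)\big]$, where $R_N = \tfrac1N\sum_{i=1}^N X_iY_i$ is the overlap of two independent uniform configurations. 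Using $|R_N|\leq 1$ to bound $N R_N^p \leq N R_N^2 = S^2/N$ with $S = \sum_i X_iY_i$, together with the elementary inequality $\cosh(x)\leq e^{x^2/2}$, this ratio stays $O(1)$ as $N\to\infty$ whenever $\beta^2 p! < \tfrac12$. Paley--Zygmund then gives $Z_N(\beta)\geq\tfrac12\e_g[Z_N(\beta)]$, hence $F_N(\beta) = \Omega(N)$, with probability bounded away from $0$; since $F_N(\beta)$ is $O(\sqrt N)$-Lipschitz in $g$ (its gradient has squared norm $\leq\beta^2 p!\,N$), Gaussian concentration forces $\e_g F_N(\beta) = \Omega(N)$, and a further application of concentration together with Borel--Cantelli upgrades this to $\liminf_{N}\tfrac1N F_N(\beta) > 0$ almost surely. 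Alternatively one may simply cite that $\lim_N\tfrac1N\e_g F_N(\beta)$ exists and is strictly positive for every $\beta>0$ \cite{talagrand_sk,panchenko_book}, and use the same concentration argument to pass to the quenched almost-sure statement.

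Combining the two verifications, Theorem~\ref{chextension} yields the $\sqrt N$-consistency of $\hat\beta_N(\bm X)$ in the $p$-spin SK model for every $\beta > 0$ and almost every realization of the disorder. The main obstacle is the free-energy lower bound in condition $(2)$: unlike condition $(1)$, which is the routine net-plus-concentration estimate for Wigner-type Gaussian matrices, proving that $\tfrac1N F_N(\beta)$ stays bounded away from zero genuinely uses the fact that exponentially many spin configurations contribute to $Z_N(\beta)$, and the second-moment route needs the overlap estimate above (valid only for small $\beta$) followed by the monotonicity reduction — failing which one resorts to the Parisi formula.
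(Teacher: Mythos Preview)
Your proof is correct and follows the same high-level strategy as the paper --- verify both hypotheses of Theorem~\ref{chextension} almost surely in the disorder, using Gaussian concentration plus Borel--Cantelli --- but the tactics differ in both steps. For condition~(1), the paper establishes the increment bound $\E[G_{\bm u}(\bm x)-G_{\bm v}(\bm x)]^2\lesssim_p\tfrac1N\|\bm u-\bm v\|^2$ (Lemma~\ref{tec2}) and then invokes the Sudakov--Fernique comparison to control $\E[\sup_{\bm u}G_{\bm u}(\bm x)]$, followed by the Borell--TIS inequality and a union bound over the $2^N$ configurations $\bm x$; you instead use the more elementary $\varepsilon$-net argument on $S^{N-1}$ together with a single-variable Gaussian tail bound, which avoids both comparison and suprema-concentration theorems at the price of a slightly cruder (but sufficient) constant. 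For condition~(2), the paper simply cites \cite[Theorem~1.1]{bovier} for the almost-sure limit $\lim_N\tfrac1N F_N(\beta)=\beta^2/2$ at small $\beta$, whereas you supply a self-contained second-moment/Paley--Zygmund argument combined with the $O(\sqrt N)$-Lipschitz Gaussian concentration of $F_N$; this makes the proof independent of the cited free-energy asymptotics. The reduction to small $\beta$ via monotonicity of $F_N$ is common to both approaches.
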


The proof of this result is given in Section \ref{skproof}. In this case, condition (2) in Theorem \ref{chextension} can be easily verified using monotonicity and the well-known asymptotics of $F_N(\beta)$ in the high-temperature (small $\beta$) regime: In particular, we know from \cite[Theorem 1.1]{bovier} that, almost surely, $\lim_{N \rightarrow \infty}\frac{1}{N}F_N(\beta) = \tfrac{\beta^2}{2}$, for $\beta >0$ small enough. Hence, by the monotonicity of $F_N(\beta)$, we have $\lim_{N \rightarrow \infty}\frac{1}{N}F_N(\beta) > 0 $ for all $\beta > 0$, which establishes (2).  However, unlike when $p=2$, verifying condition $(1)$ in Theorem \ref{chextension} when $p \geq 3$ requires more work.\footnote{Note that when $p=2$, $\bm J_N$ is a Wigner matrix, and hence, by \cite[Theorem 2.12]{spectral_norm}  $\sup_{N \geq 1} ||\bm J_N|| < \infty$, thus verifying condition $(1)$ of Theorem \ref{chextension}.} To this end, note that for $p \geq 3$ and every fixed $\bm x \in \cC_N$, the local interaction matrix $\bm J_N(\bm x)$ is a Gaussian random matrix, but the elements are now dependent because of the symmetry of the tensor $\bm J_N$. This dependence, however, is relatively weak and using standard Gaussian process machinery we can show the validity of \eqref{eq:JNcondition}, and, hence, that of condition $(1)$ in Theorem \ref{chextension}.

\subsubsection{Ising Models on Hypergraphs}\label{nonstoch}

The $p$-tensor model \eqref{model} can be interpreted as a spin system on a weighted $p$-uniform hypergraph, where the entries of the tensor correspond to the weights of the hyperedges. More precisely, given a symmetric tensor $\bm J_N = ((J_{i_1 i_2 \ldots i_p}))_{1 \leq i_1, i_2, \ldots, i_p \leq N}$, construct a weighted $p$-uniform hypergraph $H_N$ with vertex set $[N]:=\{1, 2, \ldots, N\}$ and edge weights $w(\bm e)= J_{i_1 i_2 \ldots i_p}$, for $\bm e =(i_1, i_2, \ldots, i_p) \in [N]_p$.\footnote{For the set $[N]=\{1, 2, \ldots, N\}$, $[N]^p$ denotes the $p$-fold Cartesian product $[N]\times [N] \times \cdots \times [N]$, and $[N]_p$ is the collection of $p$-tuples in $[N]^p$ with distinct entries.} The model \eqref{model} is then a spin system on $H_N$ where the Hamiltonian \eqref{eq:HN} can be rewritten as 
$$H_N(\bm X) = \sum_{\bm e \in {[N]_p}} w(\bm e) X_{\bm e},$$
where $\bm X= (X_1, X_2, \ldots, X_N) \in \cC_N$ and $X_{\bm e} =X_{i_1} X_{i_2} \ldots X_{i_p}$, for $\bm e = (i_1, i_2, \ldots, i_p)$. For a tensor $\bm J_N=((J_{i_1 i_2 \ldots i_N}))$, define the (weighted) degree of the vertex $i_1$ as $$d_{\bm J_N}({i_1}):= \frac{1}{(p-1)!}\sum_{1\leq i_2, i_3,\ldots,i_p \leq N}|J_{i_1 i_2 i_3\ldots i_p}|,$$ 
which is the sum of the absolute values weights of the hyperedges passing through the vertex $i_1$. Similarly, define the weighted {\it co-degree} of the vertices $i_1, i_2$ as 
\begin{align}\label{eq:dJN}
d_{\bm J_N}(i_1, i_2):=\frac{1}{(p-2)!}\sum_{1\leq i_3,\ldots,i_p \leq N} |J_{i_1 i_2 i_3\ldots i_p}|, 
\end{align}
which is the sum of the absolute values of weights of the hyperedges incident on both $i_1$ and $i_2$. Denote by $\bm D_{\bm J_N} = ((d_{\bm J_N}(i_1, i_2)))_{1 \leq i_1, i_2 \leq N}$, the {\it co-degree matrix} corresponding to the tensor $\bm J_N$. The following corollary provides useful sufficient conditions under which the MPL estimate is $\sqrt N$-consistent at all   temperatures. The proof is given in Section \ref{sec:boundeddegpf}.

\begin{cor}\label{boundeddeg} Suppose $\{\bm J_N\}_{N \geq 1}$ is a sequence of $p$-tensors such that the following two conditions hold: 
\begin{enumerate}
\item[$(1)$] $\sup_{N\geq 1} \|\bm D_{\bm J_N} \| <\infty$,
\item[$(2)$] $\liminf_{N\rightarrow \infty} \frac{1}{N} \sum_{1 \leq i_1 < i_2 < \ldots < i_p \leq N} J_{i_1 i_2 \ldots i_p}^2> 0$.
\end{enumerate}
Then the MPL estimate $\hat{\beta}_N(\bm X)$ is $\sqrt N$-consistent for all $\beta > 0$. 
\end{cor}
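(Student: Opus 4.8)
The plan is to deduce Corollary~\ref{boundeddeg} from Theorem~\ref{chextension} by verifying its two hypotheses for an arbitrary fixed $\beta>0$; once both are checked, the conclusion is immediate.

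\emph{Verifying condition $(1)$ of Theorem~\ref{chextension}.} I would in fact establish the stronger $L_\infty$-bound \eqref{eq:JNcondition} from Remark~\ref{remark:condition}. Since $|x_{i_3}\cdots x_{i_p}|=1$, comparing \eqref{eq:Jx} with \eqref{eq:dJN} yields the entrywise domination
\[
|J_{i_1 i_2}(\bm x)|\;\leq\;\sum_{1\leq i_3,\ldots,i_p\leq N}|J_{i_1 i_2 i_3\ldots i_p}|\;=\;(p-2)!\,d_{\bm J_N}(i_1,i_2)
\]
for every $\bm x\in\sa_N$; that is, $\bm J_N(\bm x)$ is dominated entrywise (in absolute value) by the nonnegative matrix $(p-2)!\,\bm D_{\bm J_N}$. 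Using the elementary fact that $\|A\|\leq\|B\|$ whenever $|A_{ij}|\leq B_{ij}$ for all $i,j$ and $B$ has nonnegative entries (write $|\bm u^\top A\bm v|\leq|\bm u|^\top B|\bm v|$ and optimize over unit vectors), one obtains $\sup_{\bm x\in\sa_N}\|\bm J_N(\bm x)\|\leq (p-2)!\,\|\bm D_{\bm J_N}\|$, which is uniformly bounded in $N$ by condition $(1)$ of the corollary; in particular $\sup_N\E_\beta[\|\bm J_N(\bm Z)\|^4]<\infty$.

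\emph{Verifying condition $(2)$ of Theorem~\ref{chextension}.} This is the substantive step: I would lower bound $F_N(\beta)$ by restricting attention to a genuinely independent sub-system. Call a $p$-hyperedge $\bm e$ \emph{heavy} if $|J_{\bm e}|\geq c_0$, for a small constant $c_0>0$ to be chosen. On the one hand, $|J_{\bm e}|\leq d_{\bm J_N}(i,j)\leq\|\bm D_{\bm J_N}\|=O(1)$ for any $i,j\in\bm e$, and $\sum_{\bm e}|J_{\bm e}|=O(\mathbf 1^\top\bm D_{\bm J_N}\mathbf 1)=O(N)$ (both by condition $(1)$); combined with $\sum_{\bm e}J_{\bm e}^2=\Omega(N)$ from condition $(2)$ of the corollary, choosing $c_0$ small enough forces the number of heavy hyperedges to be $\Omega(N)$. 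On the other hand, condition $(1)$ gives, for each vertex $i$, $\sum_j d_{\bm J_N}(i,j)^2=(\bm D_{\bm J_N}^2)_{ii}\leq\|\bm D_{\bm J_N}\|^2=O(1)$, so $i$ has only $O(1)$ neighbours $j$ with $d_{\bm J_N}(i,j)\geq c_0$; since every heavy hyperedge through $i$ consists of $i$ together with $p-1$ such neighbours, the heavy sub-hypergraph has bounded maximum degree, and hence (greedily) contains a matching $M$ of $\Omega(N)$ vertex-disjoint heavy hyperedges. I would then split $H_N=H_N^{M}+H_N^{\mathrm{rest}}$, with $H_N^M=\sum_{\bm e\in M}p!\,J_{\bm e}X_{\bm e}$, and tilt the uniform measure $\pi$ on $\sa_N$ by $e^{\beta H_N^M}$; Jensen's inequality gives
\[
Z_N(\beta)=\E_\pi\big[e^{\beta H_N^M}e^{\beta H_N^{\mathrm{rest}}}\big]\;\geq\;\E_\pi\big[e^{\beta H_N^M}\big]\cdot e^{\beta\,\E_{\widetilde\pi}[H_N^{\mathrm{rest}}]},\qquad \widetilde\pi\propto e^{\beta H_N^M}\pi.
\]
The crucial observation is that $\E_{\widetilde\pi}[X_{\bm e}]=0$ for \emph{every} $\bm e\notin M$: if $\bm e$ meets some matching hyperedge $\bm e_j$ in $1\leq|\bm e\cap\bm e_j|\leq p-1$ vertices, then flipping one spin of $\bm e\cap\bm e_j$ together with one spin of $\bm e_j\setminus\bm e$ leaves $H_N^M$ — hence $\widetilde\pi$ — invariant while flipping $X_{\bm e}$; and if $\bm e$ meets no matching hyperedge, $X_{\bm e}$ is already centred because the non-matching spins are i.i.d.\ uniform under $\widetilde\pi$. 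Hence $\E_{\widetilde\pi}[H_N^{\mathrm{rest}}]=0$, so $Z_N(\beta)\geq\E_\pi[e^{\beta H_N^M}]=\prod_{\bm e\in M}\cosh(\beta\,p!\,J_{\bm e})\geq\big(\cosh(\beta\,p!\,c_0)\big)^{|M|}$, which gives $F_N(\beta)\geq|M|\log\cosh(\beta\,p!\,c_0)=\Omega(N)$; since $\log\cosh(\beta\,p!\,c_0)>0$ for every $\beta>0$, condition $(2)$ of Theorem~\ref{chextension} holds and the corollary follows.

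\emph{Main obstacle.} I expect the lower bound on $F_N(\beta)$ to be the hard part, and the reason is instructive: a naive second-moment estimate would only capture the CLT-scale fluctuations of $H_N$, of order $\sqrt N$, which is too weak; and a crude product-measure/entropy bound fails when $p\geq 3$ and $\beta$ is small, since the energy gain of order $\epsilon^p$ is then overwhelmed by the entropy cost of order $\epsilon^2$. Both conditions of the corollary are genuinely needed (e.g.\ the Curie--Weiss tensor satisfies $\sum_{\bm e}J_{\bm e}^2=\Omega(N)$ but has $\|\bm D_{\bm J_N}\|\to\infty$, and indeed $F_N(\beta)=o(N)$ below its estimation threshold), and the way to combine them appears to be exactly the one above: extract a linear-size matching of heavy hyperedges — whose existence is precisely what the spectral bound on $\bm D_{\bm J_N}$ buys — and argue via the sign-flip symmetry that the remaining interactions can only help.
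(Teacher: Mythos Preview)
Your verification of condition~$(1)$ of Theorem~\ref{chextension} coincides with the paper's: both bound $|\bm u^\top\bm J_N(\bm x)\bm u|$ by $(p-2)!\,|\bm u|^\top\bm D_{\bm J_N}|\bm u|$ and conclude the $L_\infty$ bound \eqref{eq:JNcondition}.

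For condition~$(2)$, however, you take a genuinely different route. The paper argues analytically: it Taylor-expands $F_N(\beta)$ around $\beta=0$, noting $F_N''(0)=(p!)^2\sum_{\bm e}J_{\bm e}^2=\Omega(N)$, proving a combinatorial lemma that $|F_N^{(3)}(0)|=|\E_0 H_N^3|\lesssim\mathrm{Trace}(\bm D_{\bm J_N}^3)\leq N\|\bm D_{\bm J_N}\|^3=O(N)$, and using $F_N^{(4)}\geq 0$ to control the remainder; this gives $F_N(\beta)/N\gtrsim\beta^2$ for small $\beta$, and monotonicity finishes. Your argument is instead constructive: you extract from the assumptions a linear-size matching of ``heavy'' hyperedges and lower-bound $Z_N(\beta)$ by the partition function of the matching alone, the sign-flip symmetry killing $\E_{\widetilde\pi}[H_N^{\mathrm{rest}}]$ exactly. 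I checked the steps (in particular: $\sum_{\bm e}|J_{\bm e}|\lesssim\mathbf 1^\top\bm D_{\bm J_N}\mathbf 1\leq N\|\bm D_{\bm J_N}\|$, so heavy edges are $\Omega(N)$; $\sum_j d_{\bm J_N}(i,j)^2=(\bm D_{\bm J_N}^2)_{ii}\leq\|\bm D_{\bm J_N}\|^2$ gives bounded heavy degree; and the sign-flip leaves $H_N^M$ invariant because matching edges are vertex-disjoint) and they are correct. Your approach avoids the third-moment combinatorics of the paper's Lemma~\ref{oddp} and gives an explicit constant in the lower bound, namely $F_N(\beta)\geq|M|\log\cosh(\beta\,p!\,c_0)$; the paper's approach, on the other hand, yields the sharper small-$\beta$ behavior $F_N(\beta)\sim\tfrac{\beta^2}{2}F_N''(0)$ and makes the role of the Frobenius norm more transparent.
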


\begin{remark}\label{remark:condition_II} 
{\em Note that, since the $L_2$-operator norm of a symmetric matrix is bounded by its $L_\infty$-operator norm,\footnote{For two sequences $a_n$ and $b_n$, $a_n \lesssim_{\Box} b_n$ means that there exists a positive constant $C(\Box)$ depending only on the subscripted parameters $\Box$, such that $a_n \leq C(\Box) b_n$ for all $n$ large enough.}     
\begin{align}\label{eq:DN_bound}
\|\bm D_{\bm J_N} \| \leq \max_{1 \leq i_1 \leq N} \sum_{i_2=1}^N d_{\bm J_N}(i_1, i_2) & = \frac{1}{(p-2)!} \max_{1 \leq i_1 \leq N} \sum_{1 \leq i_2, i_3, \ldots, i_p \leq N} |J_{i_1 i_2 \ldots i_p}| \nonumber \\   
& \lesssim_p \max_{1 \leq i_1 \leq N} d_{\bm J_N}(i_1), 
\end{align}
that is, if a tensor has bounded maximum degree, then condition $(1)$ of Theorem \ref{chextension} holds. This shows that Corollary \ref{boundeddeg} recovers the general theorem of \cite{cd_ising_II}, where $\sqrt N$-consistency of the MPL was proved, albeit for a more general model, under condition (2) and condition $(1)$ replaced by the bounded degree assumption $\max_{1 \leq i_1 \leq N} d_{i_1} = O(1)$. }   
\end{remark}

As mentioned earlier, the conditions in Corollary \ref{boundeddeg}, neither of which depend on the true parameter $\beta$, cannot hold for hypergraphs where the rate of estimation undergoes a phase transition. In fact, as explained in Remark \ref{remark:hypergraph}, the scope of this corollary is really only restricted to Ising models on hypergraphs which are sparse. The importance of the second condition in  Theorem \ref{chextension} becomes evident when the  hypergraph becomes dense, where $F_N(\beta)$ ceases to be $\Omega(N)$ for all $\beta$, and the rate of estimation changes as $\beta$ varies. This is illustrated in Section \ref{stoch} below, where the exact location of the phase transition is derived for Ising models on block hypergraphs.

\begin{remark}\label{remark:hypergraph} {\em Suppose $H_N=(V(H_N), E(H_N))$ is a sequence of unweighted $p$-uniform hypergraphs with vertex set $V(H_N)=[N] = \{1, 2, \ldots, N\}$ and edge set $E(H_N)$, with no isolated vertex. Denote by $\bm A_{H_N} = ((a_{i_1 i_2 \ldots i_p}))_{1 \leq i_1, i_2, \ldots, i_p \leq N}$ the adjacency tensor of $H_N$, that is, $a_{i_1 i_2 \ldots i_p} =1$ if $(i_1, i_2, \ldots, i_p) \in E(H_N)$ and zero otherwise. Then in order to ensure that a $p$-spin-system on $H_N$, as in \eqref{model}, has a non-trivial scaling limit, one needs to consider the scaled tensor,  
$$\bm J_{H_N}= \frac{N}{|E(H_N)|} \bm A_{H_N}.$$
In this case, the Frobenius norm condition in Corollary \ref{boundeddeg} simplifies to,   
\begin{align}\label{eq:JHN_condition}
\frac{1}{N} ||\bm J_{H_N}||_F^2 = \frac{N}{|E(H_N)|^2} \sum_{1 \leq i_1, i_2, \ldots, i_p \leq N} a_{i_1 i_2 
\ldots i_p} = \Theta\left(\frac{N}{|E(H_N)|} \right) = \Omega(1).
\end{align}
This implies, $|E(H_N)| = \Theta(N)$, since $H_N$ has no isolated vertex. Moreover, condition $(1)$ can be written as,  
\begin{align}\label{eq:DHN}
||\bm D_{\bm A_{H_N} }|| = O\left(\frac{|E(H_N)|}{N} \right). 
\end{align}
Therefore, combining \eqref{eq:JHN_condition}, \eqref{eq:DHN}, and Corollary \ref{boundeddeg}, shows that for any sequence of (unweighted) $p$-uniform hypergraphs $H_N= (V(H_N), E(H_N))$, such that $||\bm D_{\bm A_{H_N} }|| = O(1)$ and $|E(H_N)| = O(N)$, the MPL estimate $\hat{\beta}_N(\bm X)$ in the Ising model \eqref{model} with interaction tensor $\bm J_{H_N}$,  is $\sqrt N$-consistent for all $\beta > 0$.  In particular, by the bound in \eqref{eq:DN_bound} applied to the adjacency tensor $\bm A_{H_N}$, the MPL estimate $\hat{\beta}_N(\bm X)$ is $\sqrt N$-consistent for all $\beta > 0$, whenever $H_N$ has bounded maximum degree and $O(N)$ edges. } 
\end{remark}

\subsubsection{Hypergraph Stochastic Block Models}\label{stoch}

The hypergraph stochastic block model (HSBM) is a random hypergraph model where each hyperedge is present independently with probability depending on the membership of the vertices to various blocks (see \cite{hypergraph_block,hypergraph_clustering_II,hypergraph_phase_transition} and the references therein for more on the HSBM and its applications in higher-order community detection).

\begin{defn}\label{defn:block} {\em (Hypergraph Stochastic Block Model) Fix $p \geq 2$, $K\geq 1$,  a vector of community proportions $\bm \lambda := (\lambda_1,\ldots,\lambda_K)$, such that $\sum_{j=1}^K \lambda_j = 1$, and a symmetric probability tensor $\bm \Theta := ((\theta_{j_1\ldots j_p}))_{1\leq j_1,\ldots,j_p\leq K}$,  where $\theta_{j_1\ldots j_p} \in [0, 1]$, for $1\leq i_1,\ldots, i_p\leq K$. The hypergraph stochastic block model with proportion vector $\bm \lambda$ and probability tensor $\bm \Theta$ is a $p$-uniform hypergraph $H_N$ on $[N]=\{1, 2, \ldots, N\}$ vertices with adjacency tensor $\bm A_{H_N}=((a_{i_1 i_2 \ldots i_p}))_{1 \leq i_1, i_2, \ldots, i_p \leq N}$, where 
$$a_{i_1\ldots i_p} \sim \mathrm{Ber}\left(\theta_{j_1\ldots j_p}\right)\quad\textrm{for } i_1<\ldots<i_p \textrm{ and }  (i_1,\ldots,i_p) \in \cB_{j_1}\times\cdots\times \cB_{j_p},$$ where $\cB_j := (N \sum_{i=1}^{j-1}\lambda_i, N \sum_{i=1}^j \lambda_i] \bigcap [N]$, for $j \in \{1,\ldots,K\}$, and $\{a_{i_1\ldots i_p}\}_{1\leq i_1<\ldots<i_p \leq |V|}$ are independent. We denote this model by $\cH_{p, K, N} (\bm \lambda, \bm \Theta)$ and a realization from this model as $H_N \sim \cH_{p, K, N} (\bm \lambda, \bm \Theta)$. }
\end{defn}

In this section, we consider the problem of parameter estimation given a sample from an Ising model on a HSBM.  The following theorem shows that for the $p$-tensor Ising models on a HSBM, there is a critical value of $\beta$, below which estimation is impossible, and above which the MPL estimate is $\sqrt N$-consistent. The location of the phase transition is determined by the first time the maximum of a certain variational problem, which arises from the mean-field approximation of the partition function,  becomes non-zero. More formally, this is defined as,   
\begin{equation}\label{eq:beta_threshold}
\beta_{\mathrm{HSBM}}^* :=  \sup\left\{\beta \geq 0: \sup_{(t_1,\ldots,t_K)\in [0,1]^K} \phi_\beta(t_1,\ldots,t_K) = 0\right\},
\end{equation}
where the function 
$\phi_\beta: [-1,1]^K \mapsto \mathbb{R}$ is: 
\begin{align}\label{eq:threshold_function}
\phi_{\beta}(t_1, t_2, \ldots, t_K) := \beta \left(\sum_{1\leq j_1,\ldots,j_p\leq K} \theta_{j_1\ldots j_p} \prod_{\ell=1}^p \lambda_{j_\ell} t_{j_\ell} \right) - \sum_{j=1}^K \lambda_j I(t_j),
\end{align}
and $I(t) := \frac{1}{2}\left\{(1+t)\log(1+t) + (1-t)\log(1-t)\right\}$ is the binary entropy function.

\begin{thm}\label{sbmthr}  Fix $p \geq 2$ and a realization of a HSBM $H_N \sim \cH_{p, K, N}(\bm \lambda, \bm \Theta)$ on $N$ vertices, where $\bm \lambda$ is a proportion vector and $\bm \Theta$ is a symmetric probability tensor as in Definition \ref{defn:block}.  Then given a sample $\bm X \sim \P_{\beta}$ from the model \eqref{model}, with adjacency tensor $\bm J_N= \frac{1}{N^{p-1}} \bm A_{H_N}$, the following hold: 
\begin{itemize}

\item[(1)] The MPL estimate $\hat{\beta}_N(\bm X)$ is $\sqrt{N}$-consistent for $\beta > \beta_{\mathrm{HSBM}}^*$. 

\item[(2)] There does not exist any consistent sequence of estimators for any $\beta < \beta_{\mathrm{HSBM}}^*$.

\end{itemize}
\end{thm}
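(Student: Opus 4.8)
\smallskip
\noindent\emph{Part (1).} The plan is simply to verify the two hypotheses of Theorem~\ref{chextension} for $\bm J_N = N^{1-p}\bm A_{H_N}$. Condition~$(1)$ holds \emph{surely}: since $\bm A_{H_N}$ has entries in $\{0,1\}$, every entry of the local interaction matrix $\bm J_N(\bm x)$ is at most $N^{1-p}\cdot N^{p-2}=N^{-1}$ in absolute value, so $\|\bm J_N(\bm x)\|\leq\|\bm J_N(\bm x)\|_F\leq 1$ for every $\bm x$ and every realization of $H_N$ (this is the stronger $L_\infty$-bound of Remark~\ref{remark:condition}). For condition~$(2)$, I would show that, with probability $1-o(1)$ over the draw of $H_N$, $\liminf_N \tfrac1N F_N(\beta) \geq \sup_{\bm t\in[-1,1]^K}\phi_\beta(\bm t)$, which is $>0$ exactly when $\beta>\beta_{\mathrm{HSBM}}^*$: indeed (by nonnegativity of $\bm\Theta$, $\sup_{[-1,1]^K}\phi_\beta=\sup_{[0,1]^K}\phi_\beta$, so this supremum is the one in \eqref{eq:beta_threshold}), and $\beta\mapsto\sup_{\bm t}\phi_\beta(\bm t)$ is continuous and nondecreasing (monotonicity because any maximizer $\bm t^\star$ of $\phi_\beta$ has $\sum_{j_1\ldots j_p}\theta_{j_1\ldots j_p}\prod_\ell\lambda_{j_\ell}t^\star_{j_\ell}\geq 0$, since otherwise $\phi_\beta(\bm t^\star)<0\leq\phi_\beta(\bm 0)$). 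The lower bound itself is a routine method-of-types computation: grouping configurations by their vector $\bm m(\bm X)=(m_1,\dots,m_K)$ of block magnetizations ($m_j$ the average spin in $\cB_j$), there are $\exp(N\log2-N\sum_j\lambda_j I(m_j)+o(N))$ configurations with $\bm m(\bm X)$ near a given $\bm t$, and on this set $H_N(\bm X)=N\sum_{j_1\ldots j_p}\theta_{j_1\ldots j_p}\prod_\ell\lambda_{j_\ell}t_{j_\ell}+o(N)$ with probability $1-o(1)$ over the HSBM; optimizing over $\bm t$ gives $\tfrac1N F_N(\beta)\geq\phi_\beta(\bm t)+o(1)$. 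Thus both hypotheses of Theorem~\ref{chextension} hold and $\sqrt N$-consistency follows for $\beta>\beta_{\mathrm{HSBM}}^*$.

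\smallskip
\noindent\emph{Part (2), reduction.} Here I would use Le Cam's two-point method: it suffices to produce, for some $0<\beta_0<\beta_1<\beta_{\mathrm{HSBM}}^*$, a constant $\delta>0$ with $\mathrm{TV}(\P_{\beta_0},\P_{\beta_1})\leq 1-\delta$ (with probability $1-o(1)$ over $H_N$); then no estimator can be consistent at both $\beta_0$ and $\beta_1$, hence none is consistent throughout $[0,\beta_{\mathrm{HSBM}}^*)$. Writing $F_N'(\beta)=\E_\beta[H_N]$ and using convexity of $F_N$, one gets $D_{\mathrm{KL}}(\P_{\beta_0}\,\|\,\P_{\beta_1})=(\beta_0-\beta_1)F_N'(\beta_0)+F_N(\beta_1)-F_N(\beta_0)\leq F_N(2\beta_0-\beta_1)+F_N(\beta_1)-2F_N(\beta_0)\leq F_N(2\beta_0-\beta_1)+F_N(\beta_1)$, the last step using $F_N\geq0$. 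Hence, by the Bretagnolle--Huber inequality $\mathrm{TV}(\P_{\beta_0},\P_{\beta_1})\leq\sqrt{1-\exp(-D_{\mathrm{KL}}(\P_{\beta_0}\,\|\,\P_{\beta_1}))}$, everything reduces to the single estimate: \emph{for every $\beta<\beta_{\mathrm{HSBM}}^*$, $\sup_N F_N(\beta)<\infty$ with probability $1-o(1)$} (then for any fixed $\beta_0\in(0,\beta_{\mathrm{HSBM}}^*)$ choose $\beta_1\in(\beta_0,\min(2\beta_0,\beta_{\mathrm{HSBM}}^*))$). Note the easy mean-field fact $\tfrac1N F_N(\beta)\to0$ for $\beta<\beta_{\mathrm{HSBM}}^*$ is far too weak; one genuinely needs $F_N(\beta)=O(1)$.

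\smallskip
\noindent\emph{The combinatorial estimate (the hard part).} To show $F_N(\beta)=O(1)$ for $\beta<\beta_{\mathrm{HSBM}}^*$, split $H_N=\bar H_N+\Delta_N$, where $\bar H_N$ replaces each $\mathrm{Ber}(\theta_{j_1\ldots j_p})$ weight of $H_N$ by its mean $\theta_{j_1\ldots j_p}$ and $\Delta_N$ is the centered remainder. The argument rests on two ingredients. First, a \emph{uniform} deterministic identity $\bar H_N(\bm X)=N\,c(\bm m(\bm X))+O(1)$, where $c(\bm t):=\sum_{j_1\ldots j_p}\theta_{j_1\ldots j_p}\prod_{\ell=1}^p\lambda_{j_\ell}t_{j_\ell}$; this follows by expanding the distinct-index sum block-by-block and invoking Newton's identities, the corrections from coincidences among the block indices being $O(N^{p-1})$ and hence $O(1)$ after the $N^{1-p}$ scaling. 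Second, the analytic fact that for $\beta<\beta_{\mathrm{HSBM}}^*$ one has $\phi_\beta(\bm t)\leq-c_0\|\bm t\|^2$ on $[-1,1]^K$ for some $c_0=c_0(\beta)>0$: its unique maximizer is $\bm 0$ (if $\phi_\beta(\bm t_0)=0$ with $\bm t_0\neq\bm 0$ and $\beta<\beta_{\mathrm{HSBM}}^*$, then $c(\bm t_0)>0$, forcing $\sup\phi_{\beta'}>0$ for $\beta'\in(\beta,\beta_{\mathrm{HSBM}}^*)$, a contradiction), and one combines $I(t)\geq t^2/2$ near $\bm 0$ with compactness away from $\bm 0$. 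Now grouping by $\bm m$, Stirling's formula gives $2^{-N}\#\{\bm X:\bm m(\bm X)=\bm m\}\lesssim N^{-K/2}\exp(-N\sum_j\lambda_j I(m_j))$, so, pulling out $e^{\beta N c(\bm m)}$ via the first ingredient and using the second,
\[
Z_N(\beta)\;\lesssim\; N^{-K/2}\sum_{\bm m}e^{N\phi_\beta(\bm m)}\,\E\!\big[e^{\beta\Delta_N}\,\big|\,\bm m(\bm X)=\bm m\big]\;\lesssim\;N^{-K/2}\sum_{\bm m}e^{-c_0 N\|\bm m\|^2}\,\E\!\big[e^{\beta\Delta_N}\,\big|\,\bm m(\bm X)=\bm m\big],
\]
where the conditional expectation is over a uniformly random configuration with block magnetization $\bm m$. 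The Gaussian-type sum $N^{-K/2}\sum_{\bm m}e^{-c_0N\|\bm m\|^2}$ is $O(1)$ (the grid spacing of each $m_j$ is $\Theta(1/N)$), so it remains to bound $\E[e^{\beta\Delta_N}\mid\bm m]$ by $O(1)$ uniformly in $\bm m$ with high probability over $H_N$. For a fixed configuration, $\Delta_N$ is a sum of $O(N^p)$ independent bounded terms each of size $O(N^{1-p})$, hence sub-Gaussian with variance proxy $O(N^{2-p})$; averaging the exponential moment over the HSBM weights first gives $\E_{H_N}\E[e^{\beta\Delta_N}\mid\bm m]=e^{O(\beta^2N^{2-p})}=O(1)$, and a matching second-moment computation in the HSBM weights yields concentration (variance $o(1)$ for $p\geq3$, $O(1)$ for $p=2$), so $\E[e^{\beta\Delta_N}\mid\bm m]=O(1)$ uniformly in $\bm m$ with probability $1-o(1)$. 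This gives $F_N(\beta)=O(1)$ and closes Part~(2).

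\smallskip
\noindent\emph{Main obstacle.} The crux is exactly the bound $F_N(\beta)=O(1)$: getting it (rather than the trivial $o(N)$) forces one to retain the polynomial $N^{-K/2}$ Stirling prefactor \emph{together with} the quadratic decay $\phi_\beta(\bm m)\lesssim-\|\bm m\|^2$ (a mere $\phi_\beta\leq0$ would only leave $F_N=O(\log N)$), and to control simultaneously the two sources of within-class fluctuation of the Hamiltonian --- the combinatorial fluctuation from the exact spin pattern, absorbed by the uniform identity $\bar H_N=Nc(\bm m)+O(1)$, and the fluctuation $\Delta_N$ from the randomness of the HSBM, absorbed by a sub-Gaussian/second-moment argument --- each of which must contribute only $O(1)$ to $\log Z_N$ uniformly over all $\sim N^K$ magnetization classes. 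Everything else (the verification of Theorem~\ref{chextension}'s hypotheses, the analytic properties of $\phi_\beta$, and the two-point reduction) is routine.
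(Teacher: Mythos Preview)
Your overall strategy matches the paper's closely. For Part~(1) you verify the hypotheses of Theorem~\ref{chextension}; the paper does the same, using the mean-field/Gibbs variational lower bound from \cite{CD16} rather than your method-of-types count, together with McDiarmid's inequality (Lemma~\ref{mcdiarmid}) to pass from $\E F_N$ to $F_N$. For Part~(2) you reduce to $F_N(\beta)=O(1)$, split $H_N=\bar H_N+\Delta_N$, handle $\bar H_N$ by Stirling plus the quadratic bound $\phi_\beta(\bm t)\leq -c_0\|\bm t\|^2$, and handle $\Delta_N$ by a sub-Gaussian MGF bound. The paper follows exactly this outline (Lemmas~\ref{lm:logpartition_I} and~\ref{st2}). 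One cosmetic point: your compactness argument for the quadratic bound is correct but heavier than needed; the paper simply picks $\varepsilon>0$ with $\beta/(1-\varepsilon)<\beta_{\mathrm{HSBM}}^*$, so that $\phi_{\beta/(1-\varepsilon)}\leq 0$ rearranges to $\phi_\beta\leq -\varepsilon\sum_j\lambda_j I(t_j)\leq -\tfrac{\varepsilon}{2}\sum_j\lambda_j t_j^2$ on all of $[-1,1]^K$.

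The one substantive difference is how the $\Delta_N$ step is closed. The paper works \emph{annealed}: it bounds $\E_{H_N}[Z_N(\beta)]$ directly via Hoeffding, obtaining $\E_{H_N}[Z_N(\beta)]\leq e^{O(N^{2-p})}\tilde Z_N(\beta)$, whence Jensen gives $\E_{H_N}[F_N(\beta)]=O(1)$. This is all that is needed for the KL bound between the \emph{joint} measures $\mathbb Q_{\beta_1},\mathbb Q_{\beta_2}$ (over both the HSBM and the spins), and impossibility for the annealed problem forces impossibility for the quenched one.

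Your route instead aims for a quenched bound and has a gap as written: you claim a second-moment computation yields $\E[e^{\beta\Delta_N}\mid\bm m]=O(1)$ \emph{uniformly} in $\bm m$ with high probability. The variance of this quantity over the HSBM is indeed $O(N^{2-p})$, but Chebyshev plus a union bound over the $\Theta(N^K)$ magnetization classes only succeeds when $p>K+2$; and for $p=2$ the variance is merely $O(1)$, giving no concentration even for a single $\bm m$. The fix is immediate: drop the uniformity and take $\E_{H_N}$ of your displayed bound on $Z_N(\beta)$ first (using $\E_{H_N}\E[e^{\beta\Delta_N}\mid\bm m]=e^{O(N^{2-p})}$ uniformly in $\bm m$, which \emph{is} true by Hoeffding), conclude $\E_{H_N}[Z_N(\beta)]=O(1)$, and then apply Markov to get $F_N(\beta)=O(1)$ with high probability---or simply follow the paper and run the whole Part~(2) argument at the level of the joint measures from the start.
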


The proof of the above result is given in Section \ref{proof6}. To show the result in $(1)$ we verify the conditions of Theorem \ref{chextension}. Here, we invoke the standard mean-field lower bound to the Gibbs variational representation of the partition function \cite{CD16}, from which it can be easily verified that $F_N(\beta) = \Omega(N)$, whenever $\beta > \beta_{\mathrm{HSBM}}^*$. Perhaps the more interesting consequence of Theorem \ref{sbmthr} is the result in (2), which shows that not only is the MPL estimate not $\sqrt N$-consistent below the threshold, no estimator is consistent in this regime, let alone $\sqrt N$-consistent. The main argument in this proof is to show that 
\begin{align}\label{eq:Fbeta_bounded_I}
F_N(\beta) = O(1), \quad \text{ for } \beta < \beta_{\mathrm{HSBM}}^*. 
\end{align} 
Once this is proved, then it can be easily verified that the Kullback-Leibler (KL) divergence between the measures $\P_{\beta_1, p}$ and $\P_{\beta_2, p}$, for any two $0 < \beta_1 < \beta_2 < \beta_{\mathrm{HSBM}}^*$ remains bounded, which in turn implies that the measures $\P_{\beta_1, p}$ and $\P_{\beta_2, p}$ are untestable, and hence inestimable. The main technical difficulty in proving an estimate like \eqref{eq:Fbeta_bounded_I} in tensor models, is the absence of `Gaussian' techniques \cite{BM16,comets}, which allows one to compare the partition function of Ising models with quadratic Hamiltonians with an appropriately chosen Gaussian model. This method, unfortunately, does not apply when $p \geq 3$, hence, to estimate the partition function we take the following more direct approach: We first consider the {\it averaged model} where the interaction tensor is replaced by the expected interaction tensor $\tilde{\bm J}_N := \e {\bm J_N}$. Using the block structure of the tensor $\tilde{\bm J}_N$ the Hamiltonian in the averaged model can be written in terms of the average of the spins in the different blocks, and hence, the partition function in the averaged model can be accurately estimated using bare-hands combinatorics (Lemma \ref{lm:logpartition_I}). We then move from the averaged model to the actual model using standard concentration arguments (Lemma \ref{st2}).

\begin{remark} {\em Using the machinery of non-linear large deviations developed in \cite{CD16}, we can in fact show that for the HSBM,  
\begin{align}\label{Fbeta_variational_problem}
\lim_{N \rightarrow \infty}\frac{1}{N} F_N(\beta) = \sup_{(t_1,\ldots,t_K)\in [0,1]^K} \phi_\beta(t_1,\ldots,t_K) , 
\end{align}
with probability 1.  Although the proof of this result has not been included in the paper, because for proving Theorem \ref{sbmthr} $(1)$ we only need to establish a lower bound on $\frac{1}{N} F_N(\beta)$, this is worth mentioning as it motivates the definition of the threshold $\beta_{\mathrm{HSBM}}^*$ and corroborates the result in Theorem \ref{sbmthr} (1).  The result in \eqref{Fbeta_variational_problem} is, however, not strong enough to show that estimation is impossible below the threshold $\beta_{\mathrm{HSBM}}^*$. Here, we need to understand the asymptotic behavior of $F_N(\beta)$ itself (without scaling by $N$), which is a more delicate matter that require arguments beyond the purview of non-linear large deviations and mean-field approximations, as discussed above. In this case, the proof of Theorem \ref{sbmthr} (2) shows that whenever the log-partition function is $o(N)$, which happens when $\beta < \beta_{\mathrm{HSBM}}^*$, it is actually $O(1)$, and hence,  there is a sharp transition from inestimability to $\sqrt N$-consistency. } 
\end{remark}

An important special case of the HSBM is the Erd\H{o}s-R\'enyi random hypergraph model, where every hyperedge is present independently with the same fixed probability. 

\begin{example}\label{example:random_hypergraph} {\em (Erd\H{o}s-R\'enyi random hypergraphs) The HSBM reduces to the classical Erd\H{o}s-R\'enyi random $p$-hypergraph model when the number of blocks $K=1$. In this case, each hyperedge is present independently with probability $\theta \in (0, 1]$, and the variational problem \eqref{eq:beta_threshold} for the threshold simplifies to 
\begin{equation}\label{hypergraph_random}
\beta_{\mathrm{ER}}^*(p, \theta) :=  \sup\left\{\beta \geq 0: \sup_{t \in [0,1]} \left\{ \beta \theta t^p - I(t) \right\} = 0\right\}.
 \end{equation} 
We will denote this hypergraph model by $\mathscr{G}_p(N, \theta)$. In this case, Theorem \ref{sbmthr} gives the following:
\begin{itemize}

\item In the  Erd\H{o}s-R\'enyi random $p$-hypergraph model $\mathscr{G}_p(N, \theta)$, the MPL estimate $\hat{\beta}_N(\bm X)$ is $\sqrt{N}$-consistent for all $\beta > \beta_{\mathrm{ER}}^*(p, \theta)$. 

\item On the other hand, there does not exist any consistent sequence of estimators for any $\beta < \beta_{\mathrm{ER}}^*(p, \theta)$.  
\end{itemize} 
Note that by the change of variable $\kappa = \beta \theta$, it follows that $\beta_{\mathrm{ER}}^*(p, \theta) = \beta_{\mathrm{ER}}^*(p, 1)/\theta$. A simple analysis shows $\beta_{\mathrm{ER}}^*(2, 1)=0.5$, and hence, $\beta_{\mathrm{ER}}^*(2, \theta)  = 0.5/\theta$. For higher values of $p$, $\beta_{\mathrm{ER}}^*(p, 1)$ can be easily computed numerically. In particular, we have $\beta_{\mathrm{ER}}^*(3, 1) \approx 0.672$ and $\beta_{\mathrm{ER}}^*(4, 1) \approx 0.689$. In fact, $\beta_{\mathrm{ER}}^*(p, 1)$ is strictly increasing in $p$ and $\lim_{p \rightarrow \infty} \beta_{\mathrm{ER}}^*(p, 1) = \log 2$ (see Appendix \ref{cwtp} for a proof). } 
\end{example}

Another example is that of random $p$-partite $p$-uniform hypergraphs, which are natural extensions of random bipartite graphs.

\begin{example}[Random $p$-partite $p$-uniform hypergraphs] {\em A $p$-uniform hypergraph is said to be  $p$-partite if the vertex set of the hypergraph can be partitioned into $p$-nonempty sets in such a way that every edge intersects every set of the partition in exactly one vertex.  A random $p$-partite $p$-uniform hypergraph, is a  $p$-partite $p$-uniform hypergraph where each edge is present independently with some fixed probability $\theta \in (0, 1]$ \cite{multipartite_random_hypergraph}. More formally, given a vector $\bm N = (N_1, N_2, \ldots, N_p)$ of positive integers, such that $\sum_{j=1}^p N_j = N$ and $\theta \in (0, 1]$, in the random $p$-partite $p$-uniform hypergraph $\cH_p(\bm N, \theta)$, the vertex set  $[N]=\{1, 2, \ldots, N\}$ is partitioned into $p$ disjoint sets $S_1,\ldots, S_p$, such that $|S_j| = N_j$ for $1 \leq j \leq p$, and each edge $\bm e \in V_1 \times V_2 \times \cdots \times V_p$ is present independently with probability $\theta$.  If $\bm N$ is such that $\frac{1}{N} \bm N \rightarrow \bm \lambda = (\lambda_1, \lambda_2, \ldots, \lambda_p)$, as $N \rightarrow \infty$, then this is a special case of the hypergraph stochastic block model and the threshold \eqref{eq:beta_threshold} simplifies to,  
\begin{equation}\label{rurpthr}
	\beta_{\mathrm{partite}}^*(p, \bm \lambda, \theta) :=  \sup\left\{\beta \geq 0: \sup_{(t_1,\ldots,t_p)\in [0,1]^p}   \left\{ \beta \theta \prod_{j=1}^p \lambda_j t_j - \sum_{j=1}^p \lambda_j I(t_j) \right\}  = 0\right\}.
	\end{equation}
Theorem \ref{sbmthr} then implies that the MPL estimate is $\sqrt{N}$-consistent for all $\beta > \beta_{\mathrm{partite}}^*(p, \bm \lambda, \theta) $, and consistent estimation is impossible for $\beta < \beta_{\mathrm{partite}}^*(p, \bm \lambda, \theta) $. In case the $p$ partitioning sets have asymptotically equal size, that is, $\lambda_j = \frac{1}{p}$ for all $1 \leq j \leq p$, the threshold in \eqref{rurpthr} simplifies further to: 
\begin{align}\label{rurpthr_equal}
	\beta_{\mathrm{equipartite}}^*(p,  \theta) :=  \sup\left\{\beta \geq 0: \sup_{(t_1,\ldots,t_p)\in [0,1]^p}   \left\{ \beta \theta p^{-p} \prod_{j=1}^p  t_j - \frac{1}{p} \sum_{j=1}^p I(t_j) \right\}  = 0\right\}.
	\end{align}
Now, a simple analysis shows that $\beta_{\mathrm{equipartite}}^*(p,  \theta)=p^p\beta_{\mathrm{ER}}^*(p,  \theta)$. The upper bound $\beta_{\mathrm{equipartite}}^*(p,  \theta) \leq p^p\beta_{\mathrm{ER}}^*(p,  \theta)$ follows by substituting $t_1=t_2  \cdots = t_p = t \in [0, 1]$ in \eqref{rurpthr_equal} and relating it to \eqref{hypergraph_random}. For the lower bound, note by the convexity of the function $I(x)$ and the AM-GM inequality, that  
$$\beta \theta p^{-p} \prod_{j=1}^p  t_j - \frac{1}{p}\sum_{j=1}^p I(t_j) \leq \beta \theta p^{-p}\left(\frac{1}{p}\sum_{j=1}^p t_j \right)^p - I\left(\frac{1}{p}\sum_{j=1}^p t_j\right).$$
Then, by the change of variable $\kappa=\beta p^{-p}$, it follows that $\beta_{\mathrm{equipartite}}^*(p,  \theta) \geq p^p\beta_{\mathrm{ER}}^*(p,  \theta)$. } 
\end{example}

\subsection{Precise Fluctuations in the Curie-Weiss Model}\label{cltsec}
 
The $p$-tensor Curie-Weiss model is the Ising model on the complete $p$-uniform hypergraph,\footnote{In the complete $p$-uniform hypergraph with vertex set $[N]=\{1, 2, \ldots, N\}$ the set of hyperedges is the collection of all the $p$-element subsets of $[N]$.} where all the $p$-tuples of interactions are present \cite{ferromagnetic_mean_field}. In other words, this is the Ising model on the Erd\H os-R\'enyi $p$-hypergraph with $\theta=1$. Denoting $\beta_{\mathrm{CW}}^*(p) := \beta_{\mathrm{ER}}^*(p, 1)$, we know from the discussion in Example \ref{example:random_hypergraph}, that for  $\beta< \beta_{\mathrm{CW}}^*(p)$ consistent estimation is impossible, while for  $\beta > \beta_{\mathrm{CW}}^*(p)$ the MPL estimate $\hat{\beta}_N(\bm X)$  is $\sqrt{N}$-consistent. Given that we know the rate of consistency, the next natural question is to wonder whether anything can be said about the limiting distribution of the MPL estimate above the threshold. While tackling this question appears to be extremely difficult, if not impossible, for general models, the special structure of the Curie-Weiss model allows us to say much more. This begins with the observation that in the  Curie-Weiss model  the MPL estimate can be written as a function of the sample mean $\bar X_N = \frac{1}{N} \sum_{i=1}^N X_i$. Then combining the recent results on the   asymptotic distribution of $\bar X_N$ \cite{mlepaper} and the delta theorem, we can get the precise fluctuations of the MPL estimate at all points above the estimation threshold $\beta_{\mathrm{CW}}^*(p)$. This is formalized in the theorem below:

\begin{thm}\label{thm:cwmplclt} 
Fix $p \geq 2$ and consider the $p$-spin Curie-Weiss model with interaction tensor $\bm J_N = ((J_{i_1 \ldots i_p}))_{1 \leq i_1, \ldots, i_p \leq N}$, where $J_{i_1  \ldots i_p} = \frac{1}{N^{p-1}}$, for all $1 \leq i_1, \ldots, i_p \leq N$. Then for every $\beta > \beta_{\mathrm{CW}}^*(p)$, as $N \rightarrow \infty$,
\begin{equation}\label{statement1}
			\sqrt{N}(\hat{\beta}_N(\bm X) -\beta) \xrightarrow{D} N\left(0, -\frac{g''(m_*)}{p^2m_*^{2p-2}}\right),
		\end{equation}
where $g(t) := \beta t^p - I(t)$, for $t \in [-1, 1]$, and $m_*=m_*(\beta, p)$ is the unique positive global maximizer of $g$.
\end{thm}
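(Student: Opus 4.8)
The plan is to exploit the special structure of the Curie-Weiss model: there the MPL estimate is an \emph{exact} smooth function of the sample mean $\cs = \frac{1}{N}\sum_{i=1}^N X_i$, so the limiting law of $\hat\beta_N(\bm X)$ can be obtained by transporting the known central limit theorem for $\cs$ through this function via the delta method. Since $J_{i_1\ldots i_p} = N^{1-p}$ for every choice of indices, the Hamiltonian collapses to $H_N(\bm X) = N\cs^{p}$ and all the local fields coincide: $m_i(\bm X) = \cs^{p-1}$ for every $i$. Substituting into the stationarity equation \eqref{solution}, the defining equation of $\hat\beta_N(\bm X)$ becomes $N\cs^{p} = N\cs^{p-1}\tanh(pb\cs^{p-1})$, i.e.\ $\cs = \tanh(pb\cs^{p-1})$ whenever $\cs \ne 0$. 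It is known (see \cite{mlepaper}) that for $\beta > \beta_{\mathrm{CW}}^*(p)$ the magnetization $\cs$ concentrates on $\{m_*\}$ when $p$ is odd and on $\{-m_*, m_*\}$ when $p$ is even, where $m_* = m_*(\beta,p) \in (0,1)$ is the unique positive maximizer of $g$ (indeed $m_* < 1$ since $g'(t) \to -\infty$ as $t \to 1^-$); in particular $\cs$ is bounded away from $0$ and from $\pm 1$ with probability tending to $1$. On this event the above equation has a unique nonnegative root, which is therefore the infimum in \eqref{mple}, giving
\begin{equation*}
\hat\beta_N(\bm X) = \psi(\cs), \qquad \psi(x) := \frac{\tanh^{-1}(x)}{p\,x^{p-1}} = \frac{1}{2p\,x^{p-1}}\log\frac{1+x}{1-x},
\end{equation*}
where $\psi$ is $C^{\infty}$ near $m_*$, and (when $p$ is even) $\psi$ is an even function, since $\tanh^{-1}$ and $x \mapsto x^{p-1}$ are both odd.

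Next I would compute $\psi$ and $\psi'$ at $m_*$. With $I'(t) = \tanh^{-1}(t)$ and $I''(t) = (1-t^2)^{-1}$, the first-order condition $g'(m_*) = p\beta m_*^{p-1} - I'(m_*) = 0$ gives at once $\psi(m_*) = I'(m_*)/(p m_*^{p-1}) = \beta$, confirming that $\hat\beta_N(\bm X) \to \beta$ in probability. Differentiating $\psi$, then using $g'(m_*) = 0$ together with $g''(m_*) = p(p-1)\beta m_*^{p-2} - I''(m_*)$ and the substitution $\beta = I'(m_*)/(p m_*^{p-1})$, yields the identity
\begin{equation*}
\psi'(m_*) = \frac{I''(m_*)\,m_* - (p-1)\,I'(m_*)}{p\,m_*^{p}} = -\frac{g''(m_*)}{p\,m_*^{p-1}}.
\end{equation*}
Since $m_*$ is a non-degenerate interior maximizer of $g$ for $\beta > \beta_{\mathrm{CW}}^*(p)$ (again from \cite{mlepaper}), we have $g''(m_*) < 0$, so $\psi'(m_*) > 0$ and the variance in \eqref{statement1} is strictly positive.

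Finally I would invoke the central limit theorem for the magnetization from \cite{mlepaper}: for $\beta > \beta_{\mathrm{CW}}^*(p)$, $\sqrt N(\cs - m_*) \xrightarrow{D} N(0, -1/g''(m_*))$ — unconditionally when $p$ is odd, and on the event $\{\cs > 0\}$ when $p$ is even, the symmetric statement holding on $\{\cs < 0\}$ by $g(-t) = g(t)$. Combining this with the reduction $\hat\beta_N(\bm X) = \psi(\cs)$ and the delta method applied to the $C^1$ map $\psi$ at $m_*$ gives
\begin{equation*}
\sqrt N\big(\hat\beta_N(\bm X) - \beta\big) = \sqrt N\big(\psi(\cs) - \psi(m_*)\big) \xrightarrow{D} N\!\left(0,\ \psi'(m_*)^2 \cdot \Big(-\tfrac{1}{g''(m_*)}\Big)\right) = N\!\left(0,\ -\frac{g''(m_*)}{p^2\,m_*^{2p-2}}\right),
\end{equation*}
which is \eqref{statement1}. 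When $p$ is even the contribution from $\{\cs < 0\}$ yields the same limit: $\psi(\cs) = \psi(|\cs|)$ and $\psi'(-m_*) = -\psi'(m_*)$, so the sign of $\cs$ washes out in the final statement.

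The one genuinely substantive ingredient is the limiting law of $\cs$ in the $p$-spin Curie-Weiss model above the estimability threshold, which is imported wholesale from \cite{mlepaper}; the rest is an exact algebraic reduction plus the delta method. The two points needing care are: (i) verifying that the infimum in the definition \eqref{mple} of $\hat\beta_N$ genuinely equals $\psi(\cs)$ off an event of vanishing probability, which requires ruling out the spurious root $\cs = 0$ and the degenerate configurations with $\cs = \pm 1$; and (ii) in the even-$p$ case, checking that the two-sided concentration of $\cs$ still produces a single unconditional Gaussian limit for $\hat\beta_N$ — which is exactly what the evenness of $\psi$ delivers.
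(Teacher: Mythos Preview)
Your proposal is correct and follows essentially the same route as the paper: your function $\psi$ is exactly the paper's $\phi$, and both arguments amount to writing $\hat\beta_N(\bm X)=\psi(\cs)$, computing $\psi(m_*)=\beta$ and $\psi'(m_*)=-g''(m_*)/(p m_*^{p-1})$, importing the CLT for $\cs$ from \cite{mlepaper}, and applying the delta method, with the even-$p$ case handled by conditioning on the sign of $\cs$ (the paper) or equivalently by the evenness of $\psi$ (you). If anything, you are slightly more explicit than the paper in deriving $\hat\beta_N=\psi(\cs)$ from the stationarity equation and in flagging the negligible events $\cs\in\{0,\pm1\}$.
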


The proof of this result is given in Section \ref{sec:pf_cwmplclt}. Figure \ref{fig:histogram_I} shows the histogram (over $10^6$ replications) of $\sqrt N (\hat{\beta}_N(\bm X)-\beta)$ with $p=4$, $\beta= 0.75$, and $N=20000$. As predicted by the result above, we see a limiting Gaussian distribution, since $\beta= 0.75 >\beta_{\mathrm{CW}}^*(4) \approx 0.689$ is above the estimation threshold.

\begin{figure*}[h]\vspace{-0.15in}
\centering
\begin{minipage}[l]{1.0\textwidth}
\centering
\includegraphics[height=2.65in,width=4.5in]
    {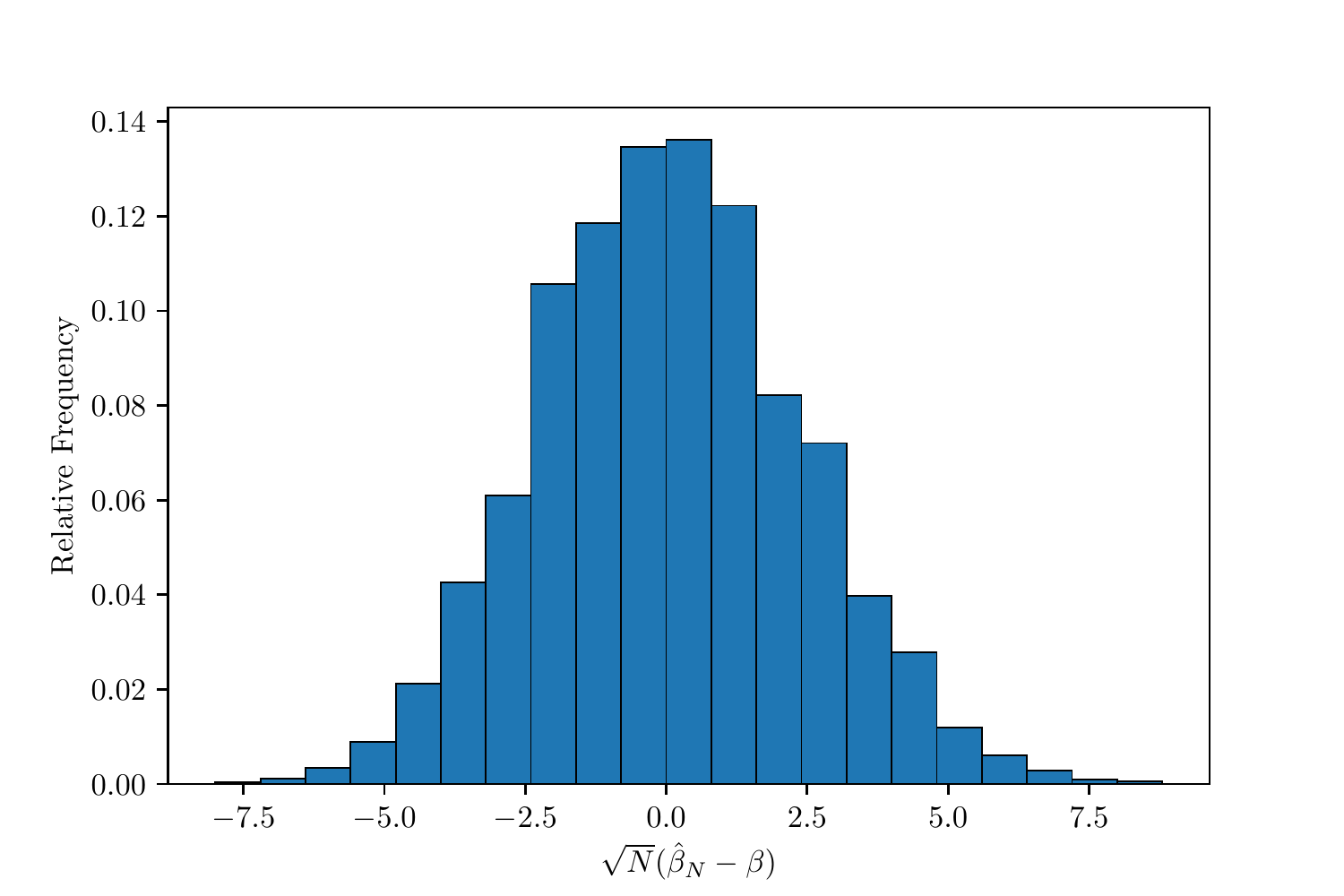}\\
\end{minipage}
\caption{\small{The histogram $\sqrt N (\hat{\beta}_N(\bm X)-\beta)$ in the 4-tensor Curie-Weiss model at $\beta= 0.75 > \beta_{\mathrm{CW}}^*(4) \approx 0.689$ (above the estimation threshold).}}
\label{fig:histogram_I}
\end{figure*}

The result in Theorem \ref{thm:cwmplclt} can be used to construct a confidence interval for the parameter $\beta$ for all points above the estimation threshold. Towards this, note, by \cite[Theorem 2.1]{mlepaper}, that $|\bar X_N| \pto m_*$ under $P_{\beta, p}$, when $\beta > \beta_{\mathrm{CW}}^*(p)$. The result in \eqref{statement1} then implies that 
\begin{equation*}
\left(\hat{\beta}_N(\bm X) - \frac{|\bar X_N|^{1-p}}{p}\sqrt{\frac{-g''(|\bar X_N|)}{N}} z_{1-\frac{\alpha}{2}},~\hat{\beta}_N(\bm X) + \frac{|\bar X_N|^{1-p}}{p}\sqrt{\frac{-g''(|\bar X_N|)}{N}} z_{1-\frac{\alpha}{2}}\right),
\end{equation*}   
is an interval which contains $\beta$ with asymptotic coverage probability $1-\alpha$, whenever $\beta > \beta_{\mathrm{CW}}^*(p)$.\footnote{For $\alpha \in (0, 1)$, $z_\alpha$ is the $\alpha$-th quantile of the standard normal distribution, that is, $\p_\beta(N(0, 1) \leq z_\alpha) = \alpha$.}

\begin{remark}\label{remark:information} {\em (Efficiency of the MPL estimate) An interesting consequence of Theorem \ref{thm:cwmplclt} is that the limiting variance in \eqref{statement1} saturates the Cramer-Rao (information) lower bound of the model, when $\beta > \bw$. To see this, note that the (scaled) Fisher information in the model \eqref{model} (recall that the Cramer-Rao lower bound is the inverse of the Fisher information) is given by,  
$$I_N(\beta) = \frac{1}{N} \e_{\beta}\left[\left(\frac{\mathrm d}{\mathrm d \beta} \log \p_{\beta}(\bs)\right)^2\right] = \mathrm{Var}_\beta (N^\frac{1}{2} \bar{X}_N^p) \rightarrow -\frac{p^2 m_*^{2p-2}}{g''(m_*)},$$ as $N \rightarrow \infty$,  where the last step follows from the asymptotics of $\bar X_N$ derived in \cite{mlepaper}. 
This implies, for $\beta > \bw$, the MPL estimate $\hat \beta_N(\bm X)$ is {\it asymptotically efficient}, which means that no other consistent estimator can have lower asymptotic mean squared error than $\hat \beta_N(\bm X)$ above the estimation threshold. While this has been shown for the maximum likelihood (ML) estimate \cite{comets,mlepaper}, that the MPL estimate, which only maximizes an approximation to the true likelihood, also has this property, is particularly encouraging, as it showcases the effectiveness of the MPL method, both computationally as well as in terms of statistical efficiency. }   
\end{remark}

The results above show that the MPL estimate is $\sqrt N$-consistent and asymptotic efficient whenever $\beta >  \beta_{\mathrm{CW}}^*(p)$. On the other hand, for $\beta <  \beta_{\mathrm{CW}}^*(p)$, we know from Theorem \ref{sbmthr} that consistent estimation is impossible. In particular, this means that  the MPL estimate is inconsistent for $\beta <  \beta_{\mathrm{CW}}^*(p)$. Therefore, the only case that remains is at the threshold  $\beta = \beta_{\mathrm{CW}}^*(p)$. Here, the situation is much more delicate. We address this case in the theorem below, which shows that the MPL is $\sqrt N$-consistent for $p=2$ (with a non-Gaussian limiting distribution), but inconsistent for $p \geq 3$.

\begin{thm}\label{thm:cw_threshold}(Asymptotics of the MPL estimate at the threshold) 
Fix $p \geq 2$ and consider the $p$-spin Curie-Weiss model with interaction tensor $\bm J_N = ((J_{i_1 \ldots i_p}))_{1 \leq i_1, \ldots, i_p \leq N}$, where $J_{i_1  \ldots i_p} = \frac{1}{N^{p-1}}$, for all $1 \leq i_1, \ldots, i_p \leq N$. Suppose $\beta = \beta_{\mathrm{CW}}^*(p)$. Denote by $m_*=m_*(\beta, p) \in (0, 1)$ the unique positive maximizer of the function $g:= \beta t^p - I(t)$, for $t \in [-1, 1]$, and define  
		\begin{align}\label{eq:proportion}
			\alpha := 
			\begin{cases}
				\frac{1}{1+2[(m_*^2-1)g''(m_*)]^{-\frac{1}{2}}} &\quad\text{if}~p~\textrm{is even},\\
				\frac{1}{1+[(m_*^2-1)g''(m_*)]^{-\frac{1}{2}}} &\quad\text{if}~p~\textrm{is odd}.\\
			\end{cases}
		\end{align}
Then, the following hold as $N \rightarrow \infty$, 

\begin{itemize}

\item[(1)]  If $p = 2$ $($recall $\beta_{\mathrm{CW}}^*(2)=\frac{1}{2}$$)$, then for every $t\in \mathbb{R}$, 
\begin{align}\label{eq:threshold_F}
\lim_{N\rightarrow \infty} \p_\beta\left (N^\frac{1}{2}\left(\hat{\beta}_N - \frac{1}{2}\right) \leq t\right) = 
\begin{cases}
F(\sqrt{6t}) - F(-\sqrt{6t}) &\quad\text{if}~t\geq 0\\
0 &\quad\text{if}~t < 0\\
\end{cases}
\end{align}
where $F$ is a probability distribution function with density given by $d F(t) \propto \exp\left(-\frac{t^4}{12}\right) \mathrm dt$.

\item[(2)] If $p \geq 3$, then  
\begin{align}\label{eq:threshold_mple}
\sqrt{N}(\hat{\beta}_N(\bm X) -\beta) \xrightarrow{D} (1-\alpha) N\left(0, -\frac{g''(m_*)}{p^2m_*^{2p-2}}\right) + \alpha \delta_\infty, 
\end{align}
where $g(\cdot)$ is as defined Theorem \ref{thm:cwmplclt} and $\delta_\infty$ denotes the point mass at $\infty$.

\item[(3)] Moreover, at a finer scaling, the following hold: 
\begin{itemize}
			\item[(a)] If $p \geq 4$ is even, then 
\begin{align}\label{eq:threshold_mple_I}			
			N^{1-\frac{p}{2}} \hat{\beta}_N  \xrightarrow{D} \alpha \left(\frac{1}{pZ^{p-2}}\right) + (1-\alpha)\delta_0,
\end{align}
where $Z \sim N(0,1)$. 
			
			\item[(b)] If $p \geq 3$ is odd, then 
\begin{align}\label{eq:threshold_mple_II}			
			N^{1-\frac{p}{2}} \hat{\beta}_N  \xrightarrow{D} \frac{\alpha}{2} \left(\frac{1}{p |Z|^{p-2}}\right) +\frac{\alpha}{2}\delta_\infty + (1-\alpha)\delta_0. 
\end{align}
		\end{itemize}
\end{itemize} 
\end{thm}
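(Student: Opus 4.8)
The plan is to reduce the MPL estimate in the Curie--Weiss model to an explicit function of the sample mean $\bar X_N = \tfrac1N\sum_i X_i$, and then transport the known criticality fluctuations of $\bar X_N$ through that function. First I would use the full permutation symmetry: since $J_{i_1\ldots i_p}=N^{1-p}$ for every tuple, one has $H_N(\bm X)=N^{1-p}\big(\sum_i X_i\big)^p=N\bar X_N^{\,p}$ and $m_i(\bm X)=N^{1-p}\big(\sum_j X_j\big)^{p-1}=\bar X_N^{\,p-1}$ for \emph{every} $i$. Substituting into \eqref{mple}, on the event $\{\bar X_N\neq0\}$ (which has probability tending to $1$, being the whole sample space when $N$ is odd) the equation $H_N=\sum_i m_i\tanh(pbm_i)$ becomes, after dividing by $N\bar X_N^{\,p-1}$, simply $\bar X_N=\tanh\!\big(pb\,\bar X_N^{\,p-1}\big)$. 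As the right-hand side of \eqref{mple} is strictly increasing in $b$ with range $[0,N|\bar X_N|^{p-1})$, a nonnegative root exists iff $\bar X_N^{\,p}\geq0$, i.e.\ always for $p$ even and iff $\bar X_N>0$ for $p$ odd, in which case
\[
\hat\beta_N(\bm X)=\psi_p(\bar X_N),\qquad \psi_p(x):=\frac{\tanh^{-1}(x)}{p\,x^{p-1}},
\]
while $\hat\beta_N(\bm X)=+\infty$ when $p$ is odd and $\bar X_N<0$, and $\hat\beta_N=0$ when $\bar X_N=0$. (Note $\psi_p(x)\geq0$ automatically on the relevant range, and $\psi_p$ is even when $p$ is even.)

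The second ingredient is the limiting law of $\bar X_N$ at $\beta=\beta_{\mathrm{CW}}^*(p)$, for which I would invoke the fluctuation results of \cite{mlepaper}. At this value of $\beta$, the function $g(t)=\beta t^p-I(t)$ attains its maximum value $0$ exactly on $\{0,m_*,-m_*\}$ for $p$ even and on $\{0,m_*\}$ for $p$ odd, all nondegenerate, with $g''(0)=-1$ when $p\geq3$ and $g''(m_*)<0$. Accordingly $\bar X_N$ is attracted to these wells: there is a limiting ``well label'' $\xi$ with $\p(\xi=0)=\alpha$ (the $\alpha$ of the theorem) and the mass $1-\alpha$ split equally over $\pm m_*$ (over $\{m_*\}$ alone when $p$ is odd), the weight of each well being proportional to $[(1-a^2)|g''(a)|]^{-1/2}$ by a local-CLT/Laplace estimate of $\sum_t e^{Ng(t)}$ --- which is precisely what yields the stated $\alpha$. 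Conditionally on $\xi=\pm$, $\sqrt N(\bar X_N\mp m_*)\xrightarrow{D}N(0,-1/g''(m_*))$; conditionally on $\xi=0$, $\sqrt N\,\bar X_N\xrightarrow{D}N(0,1)$ when $p\geq3$, while $N^{1/4}\bar X_N\xrightarrow{D}F$ (density $\propto e^{-t^4/12}$, from $g(t)\sim-t^4/12$ near $0$) when $p=2$.

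The conclusions then follow by feeding these limits through $\psi_p$, case by case. On $\{\xi=\pm\}$: $\psi_p$ is smooth near $m_*$, $\psi_p(m_*)=\beta$ since $\tanh^{-1}(m_*)=p\beta m_*^{p-1}$ at stationarity, and differentiating $p x^{p-1}\psi_p(x)=\tanh^{-1}(x)$ at $x=m_*$ gives $p m_*^{p-1}\psi_p'(m_*)=\tfrac1{1-m_*^2}-p(p-1)\beta m_*^{p-2}=-g''(m_*)$, hence $\psi_p'(m_*)=-g''(m_*)/(p m_*^{p-1})$; the delta method then produces $\sqrt N(\hat\beta_N-\beta)\xrightarrow{D}N\big(0,-g''(m_*)/(p^2m_*^{2p-2})\big)$ there. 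On $\{\xi=0\}$ with $p=2$: $\psi_2(x)=\tfrac12+\tfrac{x^2}{6}+O(x^4)$, so $\sqrt N(\hat\beta_N-\tfrac12)=\tfrac16(N^{1/4}\bar X_N)^2+o_P(1)\xrightarrow{D}\tfrac16 W^2$, $W\sim F$, whose distribution function is $t\mapsto F(\sqrt{6t})-F(-\sqrt{6t})$ for $t\geq0$ and $0$ otherwise, giving part (1). On $\{\xi=0\}$ with $p\geq3$: $\psi_p(x)=(px^{p-2})^{-1}(1+O(x^2))$ blows up, so $\hat\beta_N\to\infty$ and $\sqrt N(\hat\beta_N-\beta)\to\infty$ on this event, which together with the $\{\xi=\pm\}$ part assembles to $(1-\alpha)N(0,\cdot)+\alpha\delta_\infty$ in part (2). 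For the finer scaling, on $\{\xi=0\}$ one has $N^{1-p/2}\hat\beta_N=\big(p\,(N^{1/2}\bar X_N)^{p-2}\big)^{-1}(1+o_P(1))$: for $p$ even this tends to $1/(pZ^{p-2})$ with $Z\sim N(0,1)$, and on $\{\xi=\pm\}$ it tends to $0$, giving (3a); for $p$ odd the $0$-well splits, with $\{\bar X_N>0\}$ and $\{\bar X_N<0\}$ each of limiting conditional probability $\tfrac12$ by symmetry of the $N(0,1)$ limit, and on the former $N^{1-p/2}\hat\beta_N\to1/(p|Z|^{p-2})$ while on the latter $\hat\beta_N\equiv+\infty$, which with the $\delta_0$ from $\{\xi=\pm\}$ yields (3b).

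I expect the genuine work to lie in the second ingredient: establishing the exact mixture law of $\bar X_N$ at criticality and, in particular, pinning down $\alpha$. Getting the weights $[(1-a^2)|g''(a)|]^{-1/2}$ correct requires matching the Gaussian normalization at the $\pm m_*$ wells against the (likewise Gaussian, for $p\geq3$) normalization at the degenerate-free well at $0$, via a local central limit theorem for $\sum_i X_i$ coupled to a Laplace expansion of $\sum_t e^{Ng(t)}$, and separately identifying the quartic rate $g(t)\sim-t^4/12$ at $0$ when $p=2$; this is exactly where one would lean on \cite{mlepaper}. Granting that input, the remaining subtleties are the two nonstandard transfers near $0$ --- where $\psi_p$ is either degenerate ($p=2$, vanishing derivative, so one squares the $N^{1/4}$-limit) or singular ($p\geq3$, so the right normalization is $N^{1-p/2}$ and the limit is a reciprocal power of a Gaussian) --- and the careful bookkeeping of the $\hat\beta_N=+\infty$ convention together with the sign asymmetry for odd $p$, which is the source of the extra $\tfrac{\alpha}{2}\delta_\infty$ atom in (3b).
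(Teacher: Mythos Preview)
Your proposal is correct and follows essentially the same route as the paper: both reduce $\hat\beta_N$ to the explicit function $\psi_p(\bar X_N)=\phi_p(\bar X_N)$, invoke the critical fluctuation results for $\bar X_N$ from \cite{mlepaper} (conditioning on which well $\bar X_N$ falls into, which the paper phrases via intervals $A_1,A_2,A_3$ around $-m_*,0,m_*$), and then transfer by the delta method near $\pm m_*$, by the quadratic Taylor expansion of $\psi_2$ at $0$ for $p=2$, and by the reciprocal-power-of-Gaussian computation near $0$ for $p\ge3$. Your framing via a ``well label'' $\xi$ and your explicit derivation of the weight formula $[(1-a^2)|g''(a)|]^{-1/2}$ are cosmetic differences; the paper simply cites \cite{mlepaper} for the mixture weights and conditions on the events $\{\bar X_N\in A_i\}$ rather than naming a latent label.
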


The proof of this result is given in Section \ref{sec:pf_cwmplclt}. As in the proof of Theorem \ref{thm:cwmplclt}, the main ingredient in the proof of the above result is the asymptotic distribution of the sample mean at the threshold derived in \cite{comets,mlepaper}. The reason there is a change in the consistency rates of the MPLE as one moves from the 2-spin model to the $p$-spin model, for $p \geq 3$, is because the rate of convergence of the sample mean $\cs$ in the Curie-Weiss model depends on the value of $p$ at the threshold. More precisely, for $p=2$ and $\beta=\beta_{\mathrm{CW}}^*(2)=\frac{1}{2}$, $N^{\frac{1}{4}} \cs \dto F$, where $F$ is as defined in Theorem \ref{thm:cw_threshold} (1) (see \cite[Proposition 4.1]{comets}).  On the other hand, when  $p \geq 3$ and $\beta=\beta_{\mathrm{CW}}^*(p)$, $N^{\frac{1}{2}} \cs$ converges to a mixture of point masses with two or three components depending on whether $p$ is odd or even, respectively (see \cite[Theorem 1.1]{mlepaper}).

\begin{figure*}[h]\vspace{-0.15in}
\centering
\begin{minipage}[l]{0.49\textwidth}
\centering
\includegraphics[width=3.35in]
    {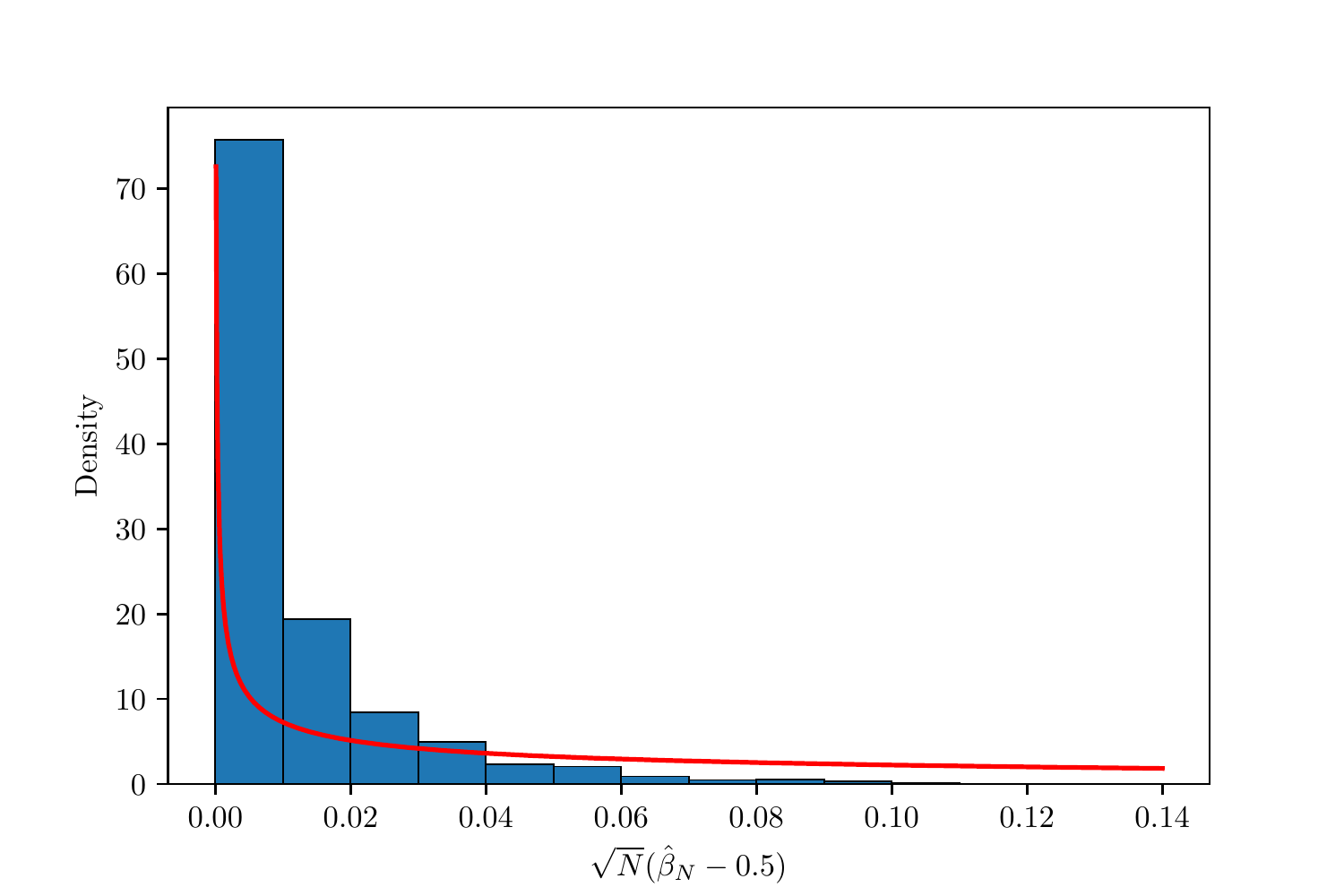}\\
\small{(a)}
\end{minipage} 
\begin{minipage}[l]{0.49\textwidth}
\centering
\includegraphics[width=3.5in]
    {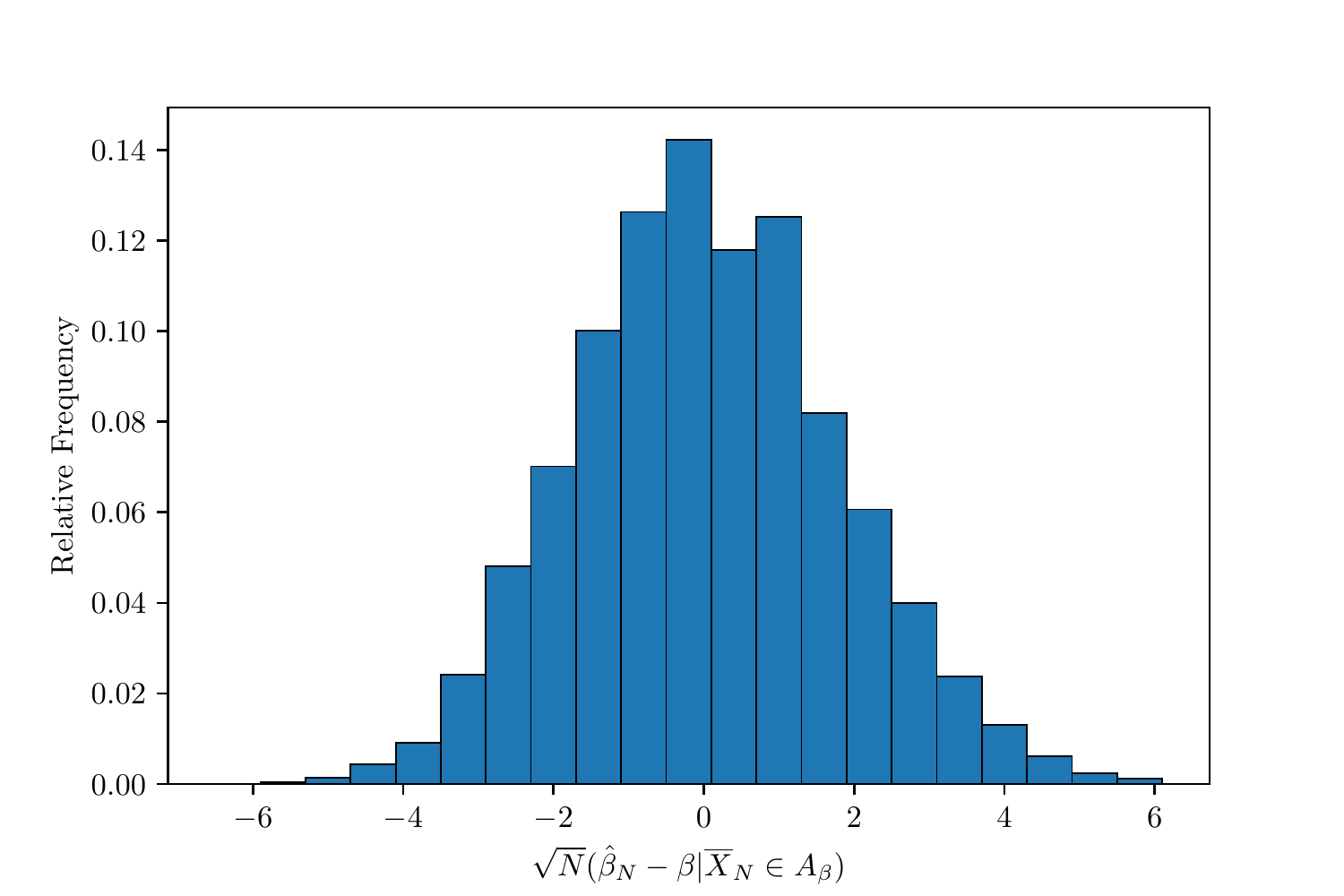}\\ 
    \small{(b)}
\end{minipage} 
\caption{\small{(a) The histogram of  $\sqrt N (\hat{\beta}_N(\bm X)-\beta)$  in the 2-tensor Curie-Weiss model at the estimation threshold ($\beta=\frac{1}{2}$) and the limiting density function (in red); and (b) the histogram of the conditional distribution $\sqrt N (\hat{\beta}_N(\bm X)-\beta)|\{\cs \in A_\beta\}$, where $A_\beta$ is the interval $[-1, 1]$ minus a small neighborhood around zero, in the 4-tensor Curie-Weiss model at the estimation threshold, which has a limiting normal distribution.}}
\label{fig:histogram_II}
\end{figure*}

Taking derivatives in \eqref{eq:threshold_F} shows that for $p=2$ the MPL estimate has a limiting Gamma distribution with density $g(a) \propto \frac{1}{\sqrt a }e^{-3 a^2} \mathrm da$.  Figure \ref{fig:histogram_II}(a) shows the histogram of $\sqrt N (\hat{\beta}_N(\bm X)-\beta)$ for $p=2$ and $\beta= \beta_{\mathrm{CW}}^*(2)= 0.5$, and the limiting density function (plotted in red). On the other hand, for $p \geq 3$, Theorem \ref{thm:cw_threshold} (3)
shows that the MPL estimate is inconsistent at the threshold (in fact, $\hat \beta_N(\bm X) \pto \infty$, for $p \geq 3$ and  $\beta=\beta_{\mathrm{CW}}^*(p)$). However, even though for $p \geq 3$ the MPL estimate is inconsistent when $\beta=\beta_{\mathrm{CW}}^*(p)$, Theorem \ref{thm:cw_threshold} (2) shows $\sqrt N (\hat{\beta}_N(\bm X)-\beta)$ has a Gaussian limit with probability $1-\alpha$,  that is, MPL estimate is $\sqrt N$-consistent at this point with probability $1-\alpha$. In fact, the proof of Theorem \ref{thm:cw_threshold} (2) shows that $\hat{\beta}_N(\bm X)$ is  not $\sqrt N$-consistent at the threshold for $p \geq 3$, only when $\cs$ is close to zero. More precisely, the proof shows that $\sqrt N (\hat{\beta}_N(\bm X)-\beta)|\{\cs \in A_\beta \} \dto N(0, -\frac{g''(m_*)}{p^2m_*^{2p-2}})$, if $A_\beta=[-1, 1] \backslash B_0$, where $B_0$ is a small neighborhood of zero. This is illustrated in Figure \ref{fig:histogram_II}(b) which plots the histogram of this conditional distribution for $p=4$ and $\beta=0.6888 \approx \beta_{\mathrm{CW}}^*(4)$ .

\subsection{Organization}
The rest of the paper is organized as follows. In Section \ref{sec:3} we prove Theorem \ref{sec:3}. The proofs of Corollary \ref{skthreshold} and Corollary \ref{boundeddeg} are given in Section \ref{skproof} and Section \ref{sec:boundeddegpf}, respectively. The proof of Theorem \ref{sbmthr} is given in Section \ref{proof6}. The proofs of Theorem \ref{thm:cwmplclt} and Theorem \ref{thm:cw_threshold} are given in Section \ref{sec:pfcwclt}. Additional properties of the Curie-Weiss threshold are given in Appendix \ref{cwtp}.

\section{Proof of Theorem \ref{chextension}}\label{sec:3}

In this section, we prove of Theorem \ref{chextension}.  We first state the two main technical estimates required in the proof, and show how these results can be used to complete the proof of Theorem \ref{chextension}. As mentioned before, the first step in the proof of Theorem \ref{chextension}  is to show that the (scaled) log-pseudolikelihood concentrates around zero at the true parameter value $\beta > 0$ at the desired rate. This is achieved by proving the following second-moment estimate on the scaled log-pseudolikelihood function. The proof of this lemma is given in Section \ref{sec:lem1}.

\begin{lem}\label{lm:boundsecondmoment} Let $\beta > 0$ be such that assumption $(1)$ of Theorem \ref{chextension} holds. Then 
	$$\e_\beta\left[s_\bs^2(\beta)\right] = O_{\beta, p}\left(\frac{1}{N}\right),$$
where $s_{\bm X}(b) = \frac{1}{pN} \frac{\partial \log L(b|\bm X)}{\partial b} = \frac{1}{N} (H_N(\bm X) -  \sum_{i=1}^N m_i(\bm X) \tanh (p b m_i(\bs) ) )$. 
\end{lem}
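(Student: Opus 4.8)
The plan is to expand $\E_\beta[s_{\bm X}^2(\beta)]$ over pairs of coordinates, integrate out a single spin in each pair using the conditional law \eqref{eq:conditional}, and then exploit a telescoping identity together with condition $(1)$ of Theorem~\ref{chextension}: the telescoping collapses the ``diagonal'' part into a transparently $O(1/N)$ expression, while the dominant off-diagonal part turns into a bilinear form in the local interaction matrix $\bm J_N(\bm X)$ that is controlled by the operator-norm moment bound.

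First I would set up notation. Writing $m_i=m_i(\bm X)$, $a_i:=X_i-\tanh(p\beta m_i)$ and using $H_N(\bm X)=\sum_{i}X_i m_i$, one has $s_{\bm X}(\beta)=\frac1N\sum_{i=1}^N a_i m_i$; moreover $m_i$ depends only on $(X_k)_{k\neq i}$ (zero diagonals of $\bm J_N$) and $\E_\beta[a_i\mid (X_k)_{k\neq i}]=0$ by \eqref{eq:conditional}. I will also use $m_i=(\bm J_N(\bm X)\bm X)_i$ and the discrete-derivative identities $\Delta_i m_j=2(p-1)J_{ij}(\bm X)$ (for $j\neq i$), $\Delta_i H_N(\bm X)=2p\,m_i$, and $J_{ii}(\bm X)=0$, where $\Delta_i h:=h|_{X_i=1}-h|_{X_i=-1}$. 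The key computational step is: for $X_i\in\{-1,1\}$ with $\E_\beta[a_i\mid(X_k)_{k\neq i}]=0$, a direct calculation gives $\E_\beta[a_i h\mid(X_k)_{k\neq i}]=\tfrac12\operatorname{sech}^2(p\beta m_i)\,\Delta_i h$ for any function $h$ of $\bm X$. Applying this with $h=a_j m_j$ (after pulling out the $(X_k)_{k\neq i}$-measurable factor $m_i$), summing over $j$ using $\sum_j a_j m_j=N s_{\bm X}(\beta)$, $s_{\bm X}(\beta)=\frac1N\big(H_N(\bm X)-\sum_k\psi(m_k)\big)$ with $\psi(t):=t\tanh(p\beta t)$, and $\Delta_i H_N=2p m_i$, yields the identity
\[
\E_\beta\!\left[s_{\bm X}^2(\beta)\right]=\frac{p}{N^2}\sum_{i=1}^N\E_\beta\!\left[m_i^2\operatorname{sech}^2(p\beta m_i)\right]-\frac{1}{2N^2}\sum_{i=1}^N\sum_{j\neq i}\E_\beta\!\left[m_i\operatorname{sech}^2(p\beta m_i)\,\Delta_i\psi(m_j)\right].
\]

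To bound the right-hand side, the first sum is $O_{\beta,p}(1/N)$ because $t\mapsto t^2\operatorname{sech}^2(p\beta t)$ is bounded on $\mathbb{R}$. For the second sum I would Taylor-expand $\psi$ around $m_j$: since $\psi'$ and $\psi''$ are bounded and $\Delta_i m_j=2(p-1)J_{ij}(\bm X)$, one gets $\Delta_i\psi(m_j)=2(p-1)J_{ij}(\bm X)\psi'(m_j)+O_{\beta,p}\!\big(J_{ij}(\bm X)^2\big)$. The quadratic remainder contributes at most $O_{\beta,p}\!\big(\tfrac1{N^2}\sum_{i,j}\E_\beta[J_{ij}(\bm X)^2]\big)=O_{\beta,p}\!\big(\tfrac1{N^2}\E_\beta[\|\bm J_N(\bm X)\|_F^2]\big)=O_{\beta,p}(1/N)$, using $\|\bm J_N(\bm X)\|_F^2\le N\|\bm J_N(\bm X)\|^2$ and condition $(1)$. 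The leading part equals $-\tfrac{p-1}{N^2}\E_\beta\!\big[\bm u^\top\bm J_N(\bm X)\bm v\big]$ with $\bm u:=\big(m_i\operatorname{sech}^2(p\beta m_i)\big)_{i=1}^N$ and $\bm v:=\big(\psi'(m_j)\big)_{j=1}^N$ (the sum may be extended to all $i,j$ since $J_{ii}(\bm X)=0$); both $\bm u,\bm v$ have entries bounded by a constant depending only on $\beta,p$, so $\|\bm u\|,\|\bm v\|\le C_{\beta,p}\sqrt N$ and $|\bm u^\top\bm J_N(\bm X)\bm v|\le\|\bm u\|\,\|\bm J_N(\bm X)\|\,\|\bm v\|\le C_{\beta,p}\,N\|\bm J_N(\bm X)\|$. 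Taking expectations and using $\sup_N\E_\beta\|\bm J_N(\bm X)\|<\infty$ (Lyapunov's inequality applied to condition $(1)$) makes the leading part $O_{\beta,p}(1/N)$ as well, which finishes the proof.

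The step I expect to be the genuine obstacle is this control of the off-diagonal terms $\E_\beta[a_i m_i a_j m_j]$, $i\neq j$. A naive triangle-inequality estimate leaves a factor $\sum_{i,j}|J_{ij}(\bm X)|$, which can be of order $N^{3/2}\|\bm J_N(\bm X)\|$ and only yields $\E_\beta[s_{\bm X}^2(\beta)]=O(N^{-1/2})$, too weak. Two ideas rescue the argument: (i) the telescoping $\sum_j\Delta_i(a_j m_j)=N\Delta_i s_{\bm X}(\beta)$ combined with $\Delta_i H_N=2p m_i$, which turns the diagonal contribution into the manifestly $O(1/N)$ term $\tfrac{p}{N^2}\sum_i\E_\beta[m_i^2\operatorname{sech}^2(p\beta m_i)]$ and reduces everything to $\Delta_i\psi(m_j)$; and (ii) expanding $\Delta_i\psi$ to linear order, so that the dominant cross term becomes a genuine bilinear form $\bm u^\top\bm J_N(\bm X)\bm v$ in bounded-entry vectors, which is controlled by the operator-norm moment in condition $(1)$ rather than by entrywise $\ell^1$-type sums of $\bm J_N(\bm X)$ (which are too large).
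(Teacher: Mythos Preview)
Your argument is correct, and it takes a genuinely different route from the paper. The paper proves the lemma via Chatterjee's exchangeable-pairs machinery: it builds an exchangeable pair $(\bm X,\bm X')$ by a single Glauber step, introduces an antisymmetric function $F(\bm x,\bm x')=\tfrac12\sum_i(m_i(\bm x)+m_i(\bm x'))(x_i-x_i')$, uses the identity $\E_\beta[f(\bm X)^2]=\tfrac12\E_\beta[(f(\bm X)-f(\bm X'))F(\bm X,\bm X')]$ with $f=s_{\bm X}(\beta)$, and then decomposes $f(\bm X)-f(\bm X^{(t)})=A_t+B_t+C_t$ into three pieces that are bounded separately; the $C_t$ piece produces terms of size $\|\bm J_N(\bm X)\|^3+\|\bm J_N(\bm X)\|^4$, which is why the fourth-moment hypothesis enters. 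You instead bypass the exchangeable-pairs formalism entirely: you condition on $(X_k)_{k\neq i}$, use the exact single-spin identity $\E_\beta[a_i h\mid(X_k)_{k\neq i}]=\tfrac12\operatorname{sech}^2(p\beta m_i)\,\Delta_i h$, and telescope via $\Delta_i H_N=2pm_i$ to reach the closed-form identity for $\E_\beta[s_{\bm X}^2(\beta)]$; the Taylor expansion of $\Delta_i\psi(m_j)$ then reduces the cross term to a bilinear form $\bm u^\top\bm J_N(\bm X)\bm v$ with bounded-entry vectors, controlled by the operator norm. The two arguments are of course cousins (a one-step Glauber resample is morally the same as conditioning on $(X_k)_{k\neq i}$), but your organization is shorter and, notably, only uses $\sup_N\E_\beta\|\bm J_N(\bm X)\|^2<\infty$ rather than the fourth moment, so under the stated hypothesis you actually prove a slightly stronger statement than the paper's proof establishes.
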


The next step of the proof is to show the strong concavity of the log-pseudolikelihood, that is, $-\frac{\partial}{\partial \beta}s_{\bm X}(\beta ) $ is strictly positive and bounded away from $0$ with high probability. To this end, note that for any $M > 0$, 
\begin{align}\label{eq:sbeta_II}
-\frac{\partial}{\partial \beta}s_{\bm X}(\beta ) & =\frac{p}{N}\sum_{i=1}^N m_i(\bm X)^2\text{sech}^2( p \beta m_i(\bm X)) \nonumber \\ 
& \geq  \frac{p}{N}\text{sech}^2(p \beta M)\sum_{i=1}^N m_i(\bm X)^2 \bm  1\{|m_i(\bm X)|\leq M\}.
\end{align}
Therefore, to show that $-\frac{\partial}{\partial \beta}s_{\bm X}(\beta )$ is strictly positive, it suffices to show that $\sum_{i=1}^N m_i(\bm X)^2 \bm  1\{|m_i(\bm X)|\leq M\}$ is $\Omega(N)$ with high probability. This is formalized in the following lemma which is proved in Section \ref{sec:lem2}.

\begin{lem}\label{derivative} Fix $0 < \delta < 1$. Then under the assumptions in Theorem \ref{chextension},  there exists $\varepsilon=\varepsilon(\delta, \beta)>0$ and $M=M(\delta, \beta)<\infty$  such that 
	\begin{align*}
	\P_{\beta}\left(\sum_{i=1}^N m_i(\bm X)^2\bm 1\{|m_i(\bm X)|\leq M\}\geq  \varepsilon N \right)\geq  1-\delta,
	\end{align*}
	for all $N$ large enough. 
\end{lem}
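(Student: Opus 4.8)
The plan is to combine two high-probability events—one supplied by each hypothesis of Theorem \ref{chextension}—with an elementary deterministic inequality. The key algebraic observation is that, by the definitions \eqref{eq:HN} and \eqref{eq:Jx} together with the symmetry of $\bm J_N$, the local magnetizations are precisely the coordinates of a matrix–vector product, $m_i(\bm X) = (\bm J_N(\bm X)\bm X)_i$, and the Hamiltonian is the associated quadratic form, $H_N(\bm X) = \langle \bm X, \bm J_N(\bm X)\bm X\rangle = \sum_{i=1}^N X_i m_i(\bm X)$. In particular $\sum_{i=1}^N m_i(\bm X)^2 = \|\bm J_N(\bm X)\bm X\|^2 \leq N\,\|\bm J_N(\bm X)\|^2$, since $\|\bm X\|^2 = N$.

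First I would show that $H_N(\bm X)$ is $\Omega(N)$ with probability tending to $1$. By condition $(2)$ of Theorem \ref{chextension} there exist $c_0 > 0$ and $N_0$ with $F_N(\beta) \geq c_0 N$ for all $N \geq N_0$. Bounding $e^{\beta H_N(\bm X)} \leq e^{\beta a N}$ over the at most $2^N$ configurations with $H_N(\bm X) \leq aN$ yields the crude comparison
$$\P_\beta\big(H_N(\bm X) \leq aN\big) \leq \frac{e^{\beta a N}}{Z_N(\beta)} = e^{\beta a N - F_N(\beta)} \leq e^{(\beta a - c_0)N},$$
which tends to $0$ as soon as $a < c_0/\beta$. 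Fixing $a := c_0/(2\beta)$, the event $E_1 := \{H_N(\bm X) > aN\}$ then has $\P_\beta(E_1) \geq 1 - \delta/2$ for $N$ large. Next, condition $(1)$ of Theorem \ref{chextension} gives $\sup_{N\geq 1} \E_\beta[\|\bm J_N(\bm X)\|^4] < \infty$, so Markov's inequality lets me pick $K = K(\beta, \delta)$ with $\P_\beta(E_2) \geq 1 - \delta/2$, where $E_2 := \{\|\bm J_N(\bm X)\|^2 \leq K\}$.

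The last step is a deterministic estimate on $E_1 \cap E_2$. Splitting $H_N(\bm X) = \sum_{i=1}^N X_i m_i(\bm X)$ according to whether $|m_i(\bm X)| \leq M$, Cauchy–Schwarz bounds the truncated part by $\sqrt{N}\,\sqrt{\sum_{i:\,|m_i(\bm X)|\leq M} m_i(\bm X)^2}$, while on $E_2$ the tail part satisfies $\sum_{i:\,|m_i(\bm X)|>M}|m_i(\bm X)| \leq M^{-1}\sum_{i=1}^N m_i(\bm X)^2 \leq M^{-1}N\|\bm J_N(\bm X)\|^2 \leq KN/M$. Hence on $E_1 \cap E_2$,
$$aN < H_N(\bm X) \leq \sqrt{N}\,\sqrt{\sum_{i:\,|m_i(\bm X)|\leq M} m_i(\bm X)^2} + \frac{KN}{M},$$
and choosing $M := 2K/a$ makes the last term at most $aN/2$, forcing $\sum_{i:\,|m_i(\bm X)|\leq M} m_i(\bm X)^2 > a^2 N/4$. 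Taking $\varepsilon := a^2/4$ and this $M$ completes the proof, as $\P_\beta(E_1 \cap E_2) \geq 1 - \delta$ for $N$ large.

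The routine ingredients are the matrix–vector reformulation and the tail bound via $|m_i| \leq m_i^2/M$; the only point requiring real care is upgrading condition $(2)$ from a statement about the (annealed) free energy to a genuine high-probability—rather than merely in-expectation—lower bound on $H_N$. The partition-function comparison above accomplishes this essentially for free, and it is the one place in the argument where the exponential tilt of the Ising measure, rather than a concentration inequality, is invoked.
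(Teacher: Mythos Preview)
Your proof is correct and in fact a bit cleaner than the paper's. Both arguments rest on the same two pillars---a lower bound $H_N(\bm X)\geq aN$ with high probability (from $F_N(\beta)=\Omega(N)$) and control of the tail $\sum_{|m_i|>M}|m_i|$ (from the moment bound on $\|\bm J_N(\bm X)\|$)---but they differ in how these are combined. You invoke the elementary identity $H_N(\bm X)=\sum_i X_i m_i(\bm X)$ and apply Cauchy--Schwarz to the truncated part, whereas the paper routes through the relation $\sum_i m_i\tanh(p\beta m_i)=H_N-Ns_{\bm X}(\beta)$, which forces it to import Lemma~\ref{lm:boundsecondmoment} (the concentration of $s_{\bm X}(\beta)$) as an additional ingredient. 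Your argument avoids this dependency entirely. You also handle the large-deviation step $\P_\beta(H_N\leq aN)\leq e^{\beta aN-F_N(\beta)}$ by a direct crude bound (effectively the paper's exponential Markov inequality at tilt $\gamma=\beta$), sidestepping the auxiliary Observation~\ref{obs:Fderivative}; and you control the tail deterministically on the event $\{\|\bm J_N(\bm X)\|^2\leq K\}$ rather than via a separate Markov inequality. The upshot is that your proof is self-contained and needs only two high-probability events instead of three, at no loss in generality.
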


The proof of Theorem \ref{chextension} can now be easily completed using the above lemmas. To this end, note that for any $M_1 > 0$, by Chebyshev's inequality and Lemma \ref{lm:boundsecondmoment} we have, 
\begin{align*}
\P_{\beta}\left(|s_{\bm X}(\beta )|> \frac{M_1}{\sqrt N}\right)\lesssim_{\beta,p} \frac{1}{M_1^2}.
\end{align*}
Now, fix $\delta>0$. Therefore, it is possible to choose $M_1=M_1(\delta, \beta)$ such that the RHS above is less than $\delta$. Next, by Lemma \ref{derivative} there exists $\varepsilon=\varepsilon(\delta, \beta)>0$ and $M_2=M_2(\varepsilon,\delta, \beta)<\infty$  such that 
\begin{align*}
\P_{\beta}\left(\sum_{i=1}^N m_i(\bm X)^2\bm 1\{|m_i(\bm X)|\leq M_2\}\geq  \varepsilon N \right)\geq  1-\delta, 
\end{align*}
for $N$ large enough. Thus, defining 
$$T_N:=\left\{\bm X \in \cC_N: |s_{\bm X}(\beta )| \leq \frac{M_1}{\sqrt N},~ \sum_{i=1}^N m_i(\bm X)^2 \bm  1\{|m_i(\bm X)|\leq M_2\}\geq  \varepsilon N \right\},$$ gives $\P_{\beta}(T_N)\geq  1-2\delta$, for $N$ large enough. For $\bm X \in T_N$, recalling \eqref{eq:sbeta_II}, gives 
\begin{align*}
-\frac{\partial}{\partial \beta}s_{\bm X}(\beta ) \geq  \frac{p}{N}\text{sech}^2(p \beta M_2)\sum_{i=1}^N m_i(\bm X)^2 \bm  1\{|m_i(\bm X)|\leq M_2\} &\geq p \varepsilon \text{sech}^2(p \beta M_2). 
\end{align*}			
Therefore, for $\bm X \in T_N$, 
\begin{align*}
\frac{M_1}{\sqrt N}\geq  |s_{\bm X}(\beta)|=|s_{\bm X}(\beta)-s_{\bm X}(\hat \beta_N(\bm X))| &  \geq   - \int_{\beta\wedge \hat \beta_N(\bm X)}^{\beta\vee \hat \beta_N(\bm X)} \frac{\partial}{\partial \beta}s_{\bm X}(\beta ) \mathrm d\beta \nonumber\\
& \geq \frac{\varepsilon }{M_2 }|\tanh(p M_2\hat \beta_N(\bm X))-\tanh(p M_2\beta)| . 
\end{align*}
Then, defining $M=M(\delta, \beta):=\frac{M_2}{M_1\varepsilon}$, shows that 
$$\P_{\beta}\left(\sqrt{N}|\tanh(p M_2\hat \beta_N(\bm X))-\tanh (p M_2\beta)|\leq R \right)\geq 1- 2\delta.$$
The proof of Theorem \ref{chextension} now follows by inverting the $\tanh$ function.

\subsection{Proof of Lemma \ref{lm:boundsecondmoment}}\label{sec:lem1}

For $\bm x, \bm x' \in \cC_N$ define 
\begin{equation*}
F(\bt,\bt')=\frac{1}{2}\sum_{i=1}^N \left(m_i(\bt)+m_i(\bt')\right)(x_i-x_i'),
\end{equation*}
where $m_i$ is as defined in \eqref{eq:conditional}. Note that $F$ is antisymmetric, that is,  $F(\bt, \bt') = -F(\bt',\bt)$.

Now, choose a coordinate $I \in \{1, 2, \ldots, N\}$ uniformly at random and replace the $I$-th coordinate of $\bm X \sim \P_{\beta}$ by a sample drawn from the conditional distribution of $X_u$ given $(X_v)_{v \ne I}$. Denote the resulting vector by $\bm X'$. Note that $F(\bs,\bs')=m_I(\bs)(X_I-X_I')$. Then 
\begin{align}\label{eq:sx_II}
f(\bs):=\mathbb{E}_\beta\left(F(\bs,\bs')|\bs\right) &= \frac{1}{N}\sum_{i=1}^N m_i(\bs) \left\{ X_i- \e_{\beta}\left(X_i|(X_j)_{j\neq i}\right) \right\} \nonumber \\ 
&=\frac{1}{N}\sum_{i=1}^N m_i(\bs)\left(X_i-\tanh\big(p \beta m_i(\bs) \big)\right) \nonumber \\
&=s_\bs(\beta).
\end{align}
Now, since $(\bs,\bs')$ is an exchangeable pair, 
\begin{equation*}
\mathbb{E}_\beta\left(f(\bs)F(\bs,\bs')\right)=\mathbb{E}_\beta\left(f(\bs')F(\bs',\bs)\right).
\end{equation*}
Again, because $F$ is antisymmetric, we have $\mathbb{E}_\beta\left(f(\bs')F(\bs',\bs)\right)=-\mathbb{E}_\beta\left(f(\bs')F(\bs,\bs')\right).$ Hence,  
\begin{align}
\mathbb{E}_\beta\left(f(\bs)^2\right)=\e_{\beta}\left(f(\bs)\e_{\beta}\left[F(\bs,\bs')|\bs\right)\right] & = \mathbb{E}_\beta\left(f(\bs)F(\bs,\bs')\right) \nonumber \\ 	 		
\label{expectation f squared} &=\tfrac{1}{2}\mathbb{E}_\beta\left((f(\bs)-f(\bs'))F(\bs,\bs')\right).
\end{align}

Now, for any $1 \leq t \leq N$ and $\bm x \in \cC_N$, let 
$$\bm x^{(t)}=(x_1, x_2, \ldots, x_{t-1}, 1- x_t, x_{t+1}, \ldots, x_N),$$ 
and 
\begin{equation*}
p_t(\bt):=\mathbb{P}_\beta(X_t'=-x_t|\bs=\bt,I=t)=\frac{e^{-p\beta x_t m_t(\bt)}}{e^{-p\beta  m_t(\bt)} + e^{p\beta m_t(\bt)}}.
\end{equation*}
This implies, 		
\begin{align}\label{cond2step} 
\E_{\beta}((f( \bm X)-f(\bm X'))F( \bm X,  \bm X')| \bm X) \nonumber &=\frac{1}{N}  \sum_{t=1}^N (f( \bm X)-f( \bm X^{(t)})) F( \bm X,  \bm X^{(t)}) p_t( \bm X) \nonumber \\ 	
&=\frac{2}{N}\sum_{t=1}^N m_t(\bs)X_t p_t(\bs) (f(\bs)-f(\bm X^{(t)})).
\end{align} 
For $1 \leq s, t \leq N$, let $a_s(\bt):=x_s -\tanh(p \beta m_s(\bt))$ and $b_{s t}(\bt):=\tanh( p \beta m_s(\bt))-\tanh(p \beta m_s(\bt^{(t)}))$. Then, noting that $f(\bt)= \frac{1}{N} \sum_{s=1}^N m_s(\bt) a_s(\bt)$ gives 
\begin{align} \label{fxx} 
f(\bs)-f(\bm X^{(t)}) &=\frac{1}{N}\sum_{s=1}^N (m_s(\bs)-m_s(\bm X^{(t)}))a_s(\bs)+\frac{1}{N}\sum_{s=1}^N m_s(\bm X^{(t)}) (a_s(\bs)-a_s(\bm X^{(t)}) ) \nonumber  \\ 
&= A_t + B_t + C_t,
\end{align} 
where $A_t := \frac{2(p-1)X_t}{N}\sum_{s=1}^N  J_{s t}(\bs) a_{s}(\bs)$, $B_t:= \frac{2 m_t(\bs)X_t}{N}$, and $C_t :=-\frac{1}{N}\sum_{s=1}^Nm_s(\bm X^{(t)}) b_{s t}(\bs)$. Then using \eqref{expectation f squared} and \eqref{cond2step}, we have 
\begin{equation*} 
\mathbb{E}_\beta\left(f(\bs)^2\right)= \frac{1}{N}\sum_{t=1}^N\e_{\beta}\left[\left(A_t + B_t + C_t \right)m_t(\bs)X_t p_t(\bs)\right]. 
\end{equation*}

Now, define the following three quantities: $$\bm a(\bs) := \left(a_1(\bs),\ldots,a_N(\bs)\right), \quad \bm m (\bs) := \left(m_1(\bs),\ldots,m_N(\bs)\right),$$ and $\bm M(\bs) := \left(m_1(\bs)p_1(\bs),\ldots,m_N(\bs)p_N(\bs)\right)$. Note that $\bm m(\bs) = \bs \bm J_N(\bs)^\top$. Also, observe that each entry of $\bm a(\bs)$ is bounded in absolute value by $2$, hence, $\|\bm a(\bs)\| \leq 2\sqrt{N}$. Moreover, using $p_t(\bm X) \leq 1$, 
\begin{align}\label{eq:norm_bound}
\|\bm M(\bs)\| = \left(\sum_{i=1}^N m_i(\bs)^2 p_i(\bs)^2\right)^\frac{1}{2} \leq \|\bm m(\bs)\| = \|\bm J_N(\bs)\bs^\top\|\leq \sqrt N \|\bm J_N(\bs)\| . 
\end{align}
Hence, recalling the definition of $A_t$ from \eqref{fxx} gives 
\begin{align}\label{firstterm}
\Bigg|\frac{1}{N}\sum_{t=1}^N A_t m_t(\bs) X_t p_t(\bs)\Bigg| & = \frac{2(p-1)}{N^2} \Big|\bm a(\bs) \bm J_N(\bs) \bm M(\bs)^\top\Big|\nonumber\\
&\lesssim_p \frac{1}{N^2}\|\bm J_N(\bs)\|\|\bm a(\bs)\|\|\bm M(\bs)\|\nonumber\\
&\lesssim_p \frac{||\bm J_N(\bm X)||^2}{N} .
\end{align}
Next, we  consider the term corresponding to $B_t$: 
\begin{align}\label{secondterm}
\Bigg|\frac{1}{N}\sum_{t=1}^N B_t m_t(\bs) X_t p_t(\bs)\Bigg| = \frac{2}{N^2}\Bigg|\sum_{t=1}^N m_t^2(\bs)p_t(\bs)\Bigg| \lesssim \frac{1}{N^2} \|\bm m (\bs)\|^2 \leq \frac{||\bm J_N(\bm X)||^2}{N},
\end{align}
where the last step uses \eqref{eq:norm_bound}.

Finally, we consider the term corresponding to $C_t$. Let us define the matrix $\bm J_{N, 2}(\bs) := ((J_{ij}(\bs)^2))_{1\leq i,j\leq N}$. Then, denoting by $\bm e_i$ the vector in $\R^N$ with the $i$-th entry 1 and 0 everywhere else, we get 
\begin{equation*}
\|\bm J_{N, 2}(\bs)\| \leq \max_{1 \leq i \leq N} \sum_{j=1}^N J_{ij}(\bs)^2 \leq \max_{1 \leq i \leq N}
\|\bm e_{i}^\top \bm J_N(\bs)\|^2 \leq  \|\bm J_N(\bs)\|^2.
\end{equation*}
Let $h(x):= \tanh\left(p \beta x \right)$. It is easy to check that $\|h''\|_\infty \leq \beta^2$. Hence, by a Taylor expansion, for $1 \leq s \leq N$, 
\begin{align}\label{taylorh}
\big|h(m_s(\bs))- h(m_s(\bm X^{(t)})) - (m_s(\bs) - m_s(\bm X^{(t)}) ) h'(m_s(\bs))\big| \lesssim_{\beta} \left(m_s(\bs) - m_s(\bm X^{(t)})\right)^2.
\end{align}
Note that $m_s(\bs)-m_s(\bm X^{(t)})=2(p-1)J_{ s t}(\bs)X_t$ and $h(m_s(\bs))- h(m_s(\bm X^{(t)})) = \tanh( p \beta m_s(\bs))-\tanh(p \beta m_s(\bs^{(t)})) = b_{s t}(\bs)$, hence, \eqref{taylorh} can be rewritten as:
\begin{equation*}
\left|b_{s t}(\bs)-2(p-1)J_{s t}(\bs)X_t h'(m_s(\bs))\right|\lesssim_{\beta, p} J_{s t}(\bs)^2.
\end{equation*} 
Using the above bounds, we have, for any $\bm x=(x_1, x_2, \ldots, x_N) ,\bm y = (y_1, y_2, \ldots, y_N) \in\mathbb{R}^N$, 
\begin{align}
& \left|\sum_{1 \leq s, t \leq N} x_s y_t b_{s t}(\bs)\right| \nonumber \\ 
&\leq \left|\sum_{1 \leq s, t \leq N}2(p-1)x_s y_t J_{s t}(\bs)X_t h'(m_s(\bs))\right|+\left|\sum_{1 \leq s, t \leq N}x_s y_t \big(b_{s t}(\bs)-2(p-1)J_{s t}(\bs)X_t h'(m_s(\bs))\big)\right| \nonumber \\
\nonumber & \lesssim_{\beta, p}  \|\bm J_N(\bs)\| \left(\sum_{s=1}^N (x_s h'(m_s(\bs)))^2 \right)^{\frac{1}{2}} \left(\sum_{t=1}^N (y_t X_t)^2 \right)^{\frac{1}{2}} + \sum_{1 \leq s, t \leq N}|x_s y_t|\ J_{s t}(\bs)^2\\
\nonumber &\lesssim_{\beta, p} \|\bm J_N(\bs)\| \|\bm x\| \|\bm y\| +  \|\bm J_{N, 2}(\bs)\| \|\bm x\|\|\bm y\| \tag*{(using $h'(m_s(\bs)) \lesssim_{\beta, p} 1$)}\\
\label{ints1} &\lesssim_{\beta, p}  \left(  \|\bm J_N(\bs)\|  +  \|\bm J_N(\bs)\|^2 \right)\|\bm x\|\|\bm y\|.
\end{align}
Again, by a Taylor expansion and using the bound $||h'||_{\infty} \lesssim_{\beta, p} 1$, gives 
$$|b_{s t}(\bs)| \lesssim_{\beta, p} |m_s(\bs)-m_s(\bm X^{(t)}) | \lesssim_{\beta, p} |J_{s t}(\bs)|.$$ Consequently,
\begin{equation}\label{ints2}
\left|\sum_{1 \leq s, t  \leq N}x_s y_t  J_{s t}(\bs)b_{s t}(\bs)\right| \lesssim_{\beta, p} \sum_{1 \leq s, t  \leq N}|x_s y_t | {J}_{s t}(\bs)^2 \leq ||\bm J_N(\bm X)||^2 \|\bm x\|\| \bm y\|.
\end{equation}
Now, recalling the definition of $C_t$ from \eqref{fxx} gives, 
\begin{align}\label{thirdterm}
\nonumber\left|\frac{1}{N}\sum_{t=1}^N C_t m_t(\bs) X_t p_t(\bm X)\right|&=\left|\frac{1}{N^2} \sum_{1 \leq s, t \leq N} m_s( \bm X^{(t)} )b_{s t}(\bs)m_t(\bs) X_t p_t(\bs) \right|\\
\nonumber 
&=\left|\frac{1}{N^2}\sum_{1 \leq s, t \leq N} \big(m_s(\bs)-2(p-1)J_{s t}(\bs) X_t \big) b_{s t}(\bs) m_t(\bs) X_t p_t(\bs)\right|\\
& \lesssim_{\beta, p} \frac{||\bm J_N(\bm X)||^3 + ||\bm J_N(\bm X)||^4}{N}, 
\end{align}
where the last step uses \eqref{ints1}, \eqref{ints2}, and $\|\bm m(\bs)\| = \|\bm J_N(\bs)\bs^\top\|\leq \sqrt N \|\bm J_N(\bs)\|$.

Combining \eqref{firstterm}, \eqref{secondterm} and \eqref{thirdterm}, it follows that	$\mathbb{E}_\beta(f(\bm X)^2) \lesssim_{\beta, p} \frac{1}{N}$, since by condition of $(1)$ of Theorem \ref{chextension}, $\E_{\beta}(||\bm J_N(\bm X)||^4 )$ is uniformly bounded. This completes the proof of the lemma, since recalling \eqref{eq:sx_II}, $f(\bs) = s_\bs(\beta)$. \qed

\subsection{Proof of Lemma \ref{derivative}}\label{sec:lem2}

We begin with the following simple observation, which says that if $\liminf_{N\rightarrow\infty}\frac{1}{N}{F_N(\beta)} > 0$, we can find a $\gamma$ small enough such that $\liminf_{N\rightarrow\infty}\frac{1}{N}{F_N'(\beta-\gamma)} > 0$.

\begin{obs}\label{obs:Fderivative} Suppose $\beta > 0$ is such that $\liminf_{N\rightarrow\infty} \frac{1}{N} F_{N}(\beta) > 0$. Then $$\lim_{\delta\rightarrow 0}\liminf_{N\rightarrow\infty}\frac{1}{N} F_{N}(\beta-\delta) > 0.$$
\end{obs}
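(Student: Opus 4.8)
The plan is to exploit convexity of the log-partition function $F_N$ in $\beta$, together with the fact that $F_N(0) = 0$ for every $N$. Indeed, $F_N(\beta) = \log Z_N(\beta)$ is convex in $\beta$ because it is (up to the additive constant $-N\log 2$ absorbed into the normalization) a cumulant generating function: $F_N''(\beta) = \mathrm{Var}_\beta(H_N(\bm X)) \geq 0$. Also, directly from the definition, $Z_N(0) = 1$, so $F_N(0) = 0$. These two facts are all we need.

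The key step is the standard convexity inequality for the slopes of a convex function. For $0 < \delta < \beta$, convexity of $F_N$ on $[0, \beta]$ gives that the slope over $[0, \beta - \delta]$ is at most the slope over $[0, \beta]$... but that is the wrong direction. Instead I use: the slope of the chord from $0$ to $\beta-\delta$ is at least the slope of the chord from $\beta-\delta$ to $\beta$ only fails, so the correct move is to compare chords with a common left endpoint in the other configuration. Precisely, for a convex function $F$ with $F(0)=0$, the map $\beta \mapsto F(\beta)/\beta$ is nondecreasing on $(0,\infty)$; hence $\frac{F_N(\beta-\delta)}{\beta-\delta} \geq \frac{F_N(\beta/2)}{\beta/2}$ whenever $\beta - \delta \geq \beta/2$, i.e. whenever $\delta \leq \beta/2$. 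Wait — I actually want to bound $F_N(\beta-\delta)$ from below in terms of $F_N(\beta)$, and monotonicity of $F(\cdot)/(\cdot)$ gives the bound in the wrong direction ($F_N(\beta-\delta)/(\beta-\delta) \leq F_N(\beta)/\beta$). So the clean route is: fix any $\delta_0 \in (0, \beta)$; for all $\delta \in (0, \delta_0]$ we have $\beta - \delta \geq \beta - \delta_0 > 0$, and since $F_N$ is nondecreasing (because $F_N'(\beta) = \mathbb{E}_\beta[H_N(\bm X)] \geq \mathbb{E}_\beta[H_N(\bm X)]\big|_{\beta=0}$... actually $H_N$ can be negative, but for the hypergraph models considered $J_N \geq 0$, so $H_N \geq 0$; in general one should not assume monotonicity). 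To avoid relying on monotonicity, I will instead use only convexity together with $F_N(0) = 0$:
\[
F_N(\beta - \delta) \;\geq\; \Bigl(1 - \tfrac{\delta}{\beta}\Bigr) F_N(\beta) + \tfrac{\delta}{\beta} F_N(0) \;=\; \Bigl(1 - \tfrac{\delta}{\beta}\Bigr) F_N(\beta),
\]
which is exactly the statement that the graph of a convex function lies below its chords — writing $\beta - \delta = (1 - \tfrac{\delta}{\beta})\cdot \beta + \tfrac{\delta}{\beta}\cdot 0$. This is the whole content.

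Dividing by $N$ and taking $\liminf_{N \to \infty}$ gives $\liminf_N \tfrac{1}{N} F_N(\beta - \delta) \geq (1 - \tfrac{\delta}{\beta}) \liminf_N \tfrac{1}{N} F_N(\beta)$. Since by hypothesis $c := \liminf_N \tfrac{1}{N} F_N(\beta) > 0$, we get $\liminf_N \tfrac{1}{N} F_N(\beta-\delta) \geq (1 - \tfrac{\delta}{\beta}) c$, which is positive for every $\delta \in (0,\beta)$ and tends to $c > 0$ as $\delta \to 0$. In particular $\lim_{\delta \to 0} \liminf_{N\to\infty} \tfrac{1}{N} F_N(\beta - \delta) \geq c > 0$, and since the inner $\liminf$ is clearly monotone in $\delta$ the limit exists; this is precisely the claimed inequality.

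I do not anticipate any real obstacle here: the only things to be careful about are (i) justifying convexity of $F_N$ — a one-line computation that $F_N'' = \mathrm{Var}_\beta(H_N) \geq 0$, or alternatively noting $Z_N$ is a sum of exponentials $e^{\beta H_N(\bm x)}$ and applying Hölder/the fact that log of such a sum is convex — and (ii) noting $F_N(0) = \log Z_N(0) = \log 1 = 0$, immediate from the definition of $Z_N$. The chord inequality then does everything, and the passage to $\liminf$ is routine since the factor $(1 - \delta/\beta)$ is a deterministic constant independent of $N$. If one prefers to avoid even mentioning second derivatives, the convexity of $\beta \mapsto \log \sum_{\bm x} e^{\beta H_N(\bm x)}$ is a textbook fact (it is the cumulant generating function of $H_N$ under the uniform measure on $\mathcal{C}_N$), so the proof can be stated in two or three lines.
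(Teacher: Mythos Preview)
Your chord inequality has the sign reversed. Convexity says the graph lies \emph{below} its chords, so writing $\beta-\delta = (1-\tfrac{\delta}{\beta})\beta + \tfrac{\delta}{\beta}\cdot 0$ yields
\[
F_N(\beta-\delta) \;\leq\; \Bigl(1-\tfrac{\delta}{\beta}\Bigr)F_N(\beta) + \tfrac{\delta}{\beta}F_N(0) \;=\; \Bigl(1-\tfrac{\delta}{\beta}\Bigr)F_N(\beta),
\]
an upper bound on $F_N(\beta-\delta)$, not the lower bound you need. (You even noticed, earlier in your attempt, that the slope monotonicity $t\mapsto F_N(t)/t$ gives the wrong direction; the chord inequality is the same fact.) This is not a cosmetic slip: convexity together with $F_N(0)=0$ alone simply cannot give the desired lower bound. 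A counterexample is the family $F_N(t)=N^2\bigl(t-\beta+\tfrac{1}{N}\bigr)^+$: each $F_N$ is convex with $F_N(0)=0$, and $\tfrac{1}{N}F_N(\beta)=1$ for all $N$, yet for every fixed $\delta>0$ one has $F_N(\beta-\delta)=0$ once $N>1/\delta$, so $\liminf_N \tfrac{1}{N}F_N(\beta-\delta)=0$.

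The paper's argument uses extra information you did not invoke: condition~(1) of Theorem~\ref{chextension}, which (via $H_N(\bm X)=\bm X^\top \bm J_N(\bm X)\bm X$) gives a linear-in-$N$ bound on the derivative, $F_N'(\beta)=\E_\beta[H_N(\bm X)]\le N\,\E_\beta\|\bm J_N(\bm X)\|\le KN$. Since $F_N'$ is nondecreasing this bounds $F_N'$ on all of $[\beta-\delta,\beta]$, and then the mean value theorem (or the supporting-line inequality for convex functions) yields $F_N(\beta-\delta)\ge F_N(\beta)-\delta KN$. Dividing by $N$ and letting $\delta\to 0$ gives the claim. In short, you need the tangent-line (lower) inequality together with a derivative bound, not the chord (upper) inequality.
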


\begin{proof} Denote $K:=\sup_{N \geq 1}  \E_\beta (||J_N(\bm X)||) < \infty $ Then, $$F_{N}'(\beta): = \frac{\mathrm d}{\mathrm d \beta} F_{N}'(\beta) = \E_\beta (H_N(\bm X)) = \E_\beta (\bm X' \bm J_N(\bm X) \bm X ) \leq N  \E_\beta (|| \bm J_N(\bm X)||) \leq K N.$$ 
	Therefore, by a Taylor expansion
	$$\lim_{\delta\rightarrow 0}\liminf_{N\rightarrow\infty}\frac{1}{N} F_{N}(\beta-\delta)  \geq  \lim_{\delta\rightarrow 0}\liminf_{N\rightarrow\infty}\left(\frac{1}{N} F_{N}(\beta) -   \delta  K \right)>0,$$
	as required. 
\end{proof}

Now, note that for any $\varepsilon, \gamma >0$,  
\begin{align*}
&\P_{\beta}(H_N(\bm X)<\varepsilon N)=\P_{\beta}(e^{-\gamma H_N(\bm X)}>e^{-\gamma \varepsilon N}) \leq e^{\gamma \varepsilon N+F_N(\beta-\gamma)-F_N(\beta)}
\end{align*}
which, on taking logarithms, implies that 
\begin{align*}
\log \P_{\beta}(H_N(\bm X)<\varepsilon N)\leq  \varepsilon\gamma N - \int_{\beta-\gamma}^{\beta}F_N'(t)\mathrm dt\leq  \varepsilon \gamma N -F_N'(\beta-\gamma)\gamma ,
\end{align*} 
by the monotonicity of $F_N'(\cdot)$. Dividing both sides by $N$ and taking limits as $N\rightarrow \infty$ followed by $\varepsilon\rightarrow 0$ we have
$$\lim_{\varepsilon\rightarrow0}\limsup_{N\rightarrow\infty}\frac{1}{N}\log \P_{\beta}(H_N(\bm X)<\varepsilon N)\leq -\liminf_{N\rightarrow\infty}\frac{1}{N}{F_N'(\beta-\gamma)} \leq -\liminf_{N\rightarrow\infty}\frac{F_N(\beta-\gamma)}{N(\beta-\gamma)} <0,$$
by choosing $\gamma$ small enough (by Observation \ref{obs:Fderivative}).  This shows that, for every $0 < \delta < 1$ there exists  $\varepsilon=\varepsilon(\delta)>0$ such that, for $N$ large enough,   
\begin{equation}
\P_{\beta}(H_N(\bm X)< 2\varepsilon N )\leq \delta. 
\label{c00}
\end{equation}

Next, by Lemma \ref{lm:boundsecondmoment} and Chebyshev's inequality, there exists  $M_1=M_1(\delta)< \infty$ such that 
\begin{eqnarray}
\P_{\beta}\left(|s_{\bm X}(\beta )|> \frac{M_1}{\sqrt N} \right)  \leq \delta.
\label{c1}
\end{eqnarray}
Moreover, note that for any $M_2 > 0$, 
$$\sum_{i=1}^N|m_i(\bm X)|\bm 1\{|m_i(\bm X)|> M_2\} \leq \frac{1}{M_2} \sum_{i=1}^N m_i(\bm X)^2 =\frac{1}{M_2}  ||J_N(\bm X) \bm X^\top||^2 \leq \frac{N ||J_N(\bm X)||^2}{M_2}.$$
Therefore, using Markov's inequality and condition $(1)$ of Theorem \ref{chextension}, we can choose $M_2=M_2(\delta) < \infty$ such that for all $N$ large enough, 
\begin{align}\label{eq:msigma_epsilon}
\P_{\beta}\left(\sum_{i=1}^N|m_i(\bm X)|\bm 1\{|m_i(\bm X)|> M_2\} > \varepsilon N \right) \leq \frac{\E_\beta(||J_N(\bm X)||^2)}{ \varepsilon M_2} \leq \delta. 
\end{align} 

Then, defining 
\begin{align*}
T_{N}:= \left\{\bm X\in \cC_N: H_N(\bm X)\geq  2\varepsilon N, ~  |s_{\bm X}(\beta )| \leq \frac{M_1}{\sqrt N}, ~ \sum_{i=1}^N|m_i(\bm X)|\bm 1\{|m_i(\bm X)|> M_2\} \leq \varepsilon N   \right \}, 
\end{align*}
and combining (\ref{c00}), (\ref{c1}), and~(\ref{eq:msigma_epsilon}), gives $\P_{\beta}(T_N )\geq  1-3\delta$, for $N$ large enough. Now, for $\bm X \in T_N$, 
\begin{align*}
& \sum_{i=1}^N m_i(\bm X)^2 \bm  1\{|m_i(\bm X)|\leq M_2\}+\varepsilon N \\ 
& \geq  \frac{1}{p \beta} \sum_{i=1}^N |m_i(\bm X)| \tanh(p \beta |m_i(\bm X)|) \bm  1\{|m_i(\bm X)|\leq M_2\} + \sum_{i=1}^N |m_i(\bm X)| \bm  1\{|m_i(\bm X)| >  M_2\}  \tag*{(using $\tanh x \leq x$)} \\ 
& \gtrsim_{p,\beta} \sum_{i=1}^N |m_i(\bm X)| \tanh(p \beta |m_i(\bm X)|) \bm  1\{|m_i(\bm X)|\leq M_2\} + \sum_{i=1}^N |m_i(\bm X)|  \tanh(p \beta |m_i(\bm X)| ) \bm  1\{|m_i(\bm X)| > M_2\} \tag*{(using $\tanh x \leq 1$)}   \\ 
& \geq \sum_{i=1}^N m_i(\bm X)\tanh(p \beta m_i(\bm X))\\ 
& = H_N(\bm X)-Ns_{\bm X}(\beta ) \geq 2\varepsilon N - M_1\sqrt{N}. 
\end{align*}
Thus, on the set $T_N$,
$$\sum_{i=1}^N m_i(\bm X)^2 \bm  1\{|m_i(\bm X)|\leq M_2\} \gtrsim_{p,\beta} 2\varepsilon N- M_ 1\sqrt{N}> \varepsilon N,$$ for all $N$ large enough. This completes the proof of Lemma \ref{derivative}. \qed

\section{Proof of Corollary \ref{skthreshold}}\label{skproof}

To prove Corollary \ref{skthreshold} we will verify that the conditions in Theorem \ref{chextension} hold with probability 1. As mentioned before, in this case condition (2) is easy to verify. To this end, note that by \cite[Theorem 1.1]{bovier}, $\lim_{N \rightarrow \infty} \frac{1}{N} F_N(\beta) = \beta^2 /2$ almost surely, for $\beta > 0$ small enough. This implies, since $F_N$ on increasing on the positive half-line, $\lim_{N \rightarrow \infty} \frac{1}{N} F_N(\beta) >0$ almost surely, for all $\beta > 0$. This establishes condition (2) in Theorem \ref{chextension}.

We now proceed to verify condition (1). To begin with, fix $\bt \in \sa_N$ and consider the Gaussian process 
\begin{align}\label{eq:sphere_ux}
G_{\bm u}(\bt):= \bm u^\top \bm J_N(\bt) \bm u
\end{align} 
indexed by $\bm u \in S^{N-1} := \{\bm t \in \mathbb{R}^N: \|\bm t\| = 1\}$. Here, $\bm J_N(\bt)$ is the local interaction matrix corresponding to the tensor \eqref{eq:JN_sk} of the $p$-tensor SK model. Note that the maximum eigenvalue of $\bm J_N(\bt)$ can be expressed as $\lambda_{\max}\left(\bm J_N(\bt)\right) = \sup_{\bm u \in S^{N-1}} G_{\bm u}(\bt)$.\footnote{For any $N \times N$  matrix  $\bm A$, $\lambda_{\max}(\bm A)$ and $\lambda_{\min}(\bm A)$ denotes the maximum and the minimum eigenvalue of $\bm A$, respectively.}

\begin{lem}\label{tec2} Fix $\bm x \in \cC_N$ and consider the Gaussian process $\{G_{\bm u}(\bm x) : \bm u \in S^{N-1} \}$ as defined above in \eqref{eq:sphere_ux}. Then, the following hold: 
	\begin{itemize}
		\item[(1)]~For every vector $\bm u \in S^{N-1}$, $\e\left[G_{\bm u}(\bm x)^2\right] \lesssim_p \frac{1}{N}$;
		\item[(2)]~For vectors $\bm u, \bm v \in S^{N-1}$, $\e\left[G_{\bm u}(\bm x) - G_{\bm v}(\bm x)\right]^2 \lesssim_p \frac{1}{N} \sum_{i=1}^N (u_i-v_i)^2$. 
	\end{itemize}
\end{lem}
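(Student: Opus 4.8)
The plan is to compute the covariance structure of the Gaussian process $\{G_{\bm u}(\bm x)\}$ directly from the definition of the $p$-tensor SK interaction tensor \eqref{eq:JN_sk} and then extract the two claimed bounds. Recall that $J_{i_1 i_2}(\bm x) = N^{\frac{1-p}{2}} \sum_{1 \le i_3, \ldots, i_p \le N} g_{i_1 i_2 i_3 \ldots i_p} x_{i_3} \cdots x_{i_p}$, so that $G_{\bm u}(\bm x) = N^{\frac{1-p}{2}} \sum_{i_1, \ldots, i_p} g_{i_1 \ldots i_p} u_{i_1} u_{i_2} x_{i_3} \cdots x_{i_p}$ — a linear form in the independent standard Gaussians $(g_{i_1 \ldots i_p})$, hence Gaussian. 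The crucial point is to identify its variance: since the $g$'s are centered and independent (up to the permutation symmetry of the tensor), $\e[G_{\bm u}(\bm x)^2] = N^{1-p} \sum_{\bm e} c_{\bm e}^2$, where the sum is over unordered $p$-tuples and $c_{\bm e}$ collects all coefficients of $g_{\bm e}$. The first step, then, is to do this bookkeeping carefully — accounting for how many ordered tuples map to a given unordered edge and for the entries $x_i = \pm 1$ which contribute $x_i^2 = 1$ — to conclude that $\e[G_{\bm u}(\bm x)^2] \le C_p N^{1-p} \cdot N^{p-2} \|\bm u\|^4 = C_p N^{-1}$, using $\|\bm u\| = 1$. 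This gives part (1).

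For part (2), I would write $G_{\bm u}(\bm x) - G_{\bm v}(\bm x)$ again as a linear form in the $g$'s, with coefficients now built from $u_{i_1} u_{i_2} - v_{i_1} v_{i_2}$. The algebraic identity $u_{i_1} u_{i_2} - v_{i_1} v_{i_2} = (u_{i_1} - v_{i_1}) u_{i_2} + v_{i_1}(u_{i_2} - v_{i_2})$ lets me split the difference into two pieces, each of which is a bilinear form with one ``slot'' occupied by the displacement vector $\bm w := \bm u - \bm v$ and the other by a unit (or near-unit) vector. Then the same variance computation as in part (1), with one $\bm u$ replaced by $\bm w$, produces $\e[G_{\bm u}(\bm x) - G_{\bm v}(\bm x)]^2 \lesssim_p \frac{1}{N}\|\bm w\|^2 (\|\bm u\|^2 + \|\bm v\|^2) \lesssim_p \frac{1}{N}\sum_{i=1}^N (u_i - v_i)^2$, where I use $\|\bm u\| = \|\bm v\| = 1$ and the triangle inequality / Cauchy--Schwarz to absorb constants; I should take care that the two split terms are handled by Minkowski's inequality in $L^2(\p)$ so the cross term causes no trouble.

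The main obstacle is purely combinatorial: keeping the index bookkeeping honest when passing from the ``ordered tuple'' expansion to the ``unordered edge'' variance, so that the powers of $N$ balance exactly and the bound is genuinely $O(1/N)$ rather than something weaker. One has to be careful that the diagonal-free and symmetry conventions on $\bm J_N$ are used consistently, and that terms where some of $i_3, \ldots, i_p$ coincide — which are lower-order in $N$ — are correctly bounded rather than ignored. Once Lemma \ref{tec2} is in hand, the intended downstream use is clear: parts (1) and (2) are precisely the hypotheses needed to apply a standard Gaussian-process tail/chaining bound (e.g. Dudley's entropy bound together with Borell--TIS concentration) to $\sup_{\bm u \in S^{N-1}} G_{\bm u}(\bm x) = \lambda_{\max}(\bm J_N(\bm x))$, uniformly in $\bm x \in \cC_N$, yielding $\sup_N \sup_{\bm x} \|\bm J_N(\bm x)\| < \infty$ and hence condition \eqref{eq:JNcondition}, which implies condition (1) of Theorem \ref{chextension}.
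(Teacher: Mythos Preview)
Your proposal is correct and, for part~(1), essentially identical to the paper's argument: both rewrite $G_{\bm u}(\bm x)$ as a linear form in the independent Gaussians $(g_{i_1\ldots i_p})_{i_1<\cdots<i_p}$, square, and use $(\sum a_i)^2 \le n\sum a_i^2$ to reduce to $N^{1-p}\cdot N^{p-2}\|\bm u\|^4$.

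For part~(2) there is a genuine, if minor, difference. You split $u_{i_1}u_{i_2}-v_{i_1}v_{i_2} = w_{i_1}u_{i_2}+v_{i_1}w_{i_2}$ with $\bm w=\bm u-\bm v$, apply Minkowski in $L^2$, and bound each piece by the same mechanism as in part~(1), obtaining $\lesssim_p N^{-1}\|\bm w\|^2(\|\bm u\|^2+\|\bm v\|^2)$. The paper instead keeps $(u_{i_1}u_{i_2}-v_{i_1}v_{i_2})$ intact through the variance computation, arrives at $N^{-1}\sum_{i,j}(u_iu_j-v_iv_j)^2$, and then closes with the exact identity $\sum_{i,j}(u_iu_j-v_iv_j)^2 = 2(1-\langle \bm u,\bm v\rangle^2) \le 2\|\bm u-\bm v\|^2$ for unit vectors. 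Your route is slightly more robust (it does not rely on $\|\bm u\|=\|\bm v\|$ until the very end and avoids the special identity), while the paper's route gives a marginally sharper constant and is arguably cleaner once one knows the identity. Either way the conclusion and the downstream application (Sudakov--Fernique plus Borell--TIS to bound $\sup_{\bm u} G_{\bm u}(\bm x)$, then union bound over $\bm x\in\cC_N$) are the same. One small note: since $\bm J_N$ has zeros on the diagonals by assumption, the ``coinciding indices'' terms you flag as needing care are in fact absent, so that worry can be dropped.
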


\noindent\textit{Proof of} $(1)$: Fix $\bm x \in \cC_N$ and $\bm u \in S^{N-1}$. Then  	
\begin{align*}
G_{\bm u}(\bt) &= \sum_{1 \leq i_1,\ldots,i_p \leq N} J_{i_1\ldots i_p} u_{i_1}u_{i_2}x_{i_3}\ldots x_{i_p}\\ &= (p-2)!\sum_{1\leq i_1<\ldots<i_p \leq N} J_{i_1\ldots i_p} \left(\sum_{1\leq s \neq t \leq p} u_{i_s} u_{i_t} \prod_{a\in \{1, 2, \ldots, p\}\setminus\{s,t\}} x_{i_a}\right).
\end{align*} 
Hence, 
\begin{align}
\e \left[G_{\bm u} (\bt)^2\right] &\lesssim_p \frac{1}{N^{p-1}}\sum_{1\leq i_1<\ldots<i_p \leq N} \left(\sum_{1\leq s \neq t \leq p} u_{i_s} u_{i_t} \prod_{a\in \{1, 2, \ldots, p\}\setminus\{s,t\}} x_{i_a}\right)^2\nonumber\\
&\lesssim_p \frac{1}{N^{p-1}} \sum_{1\leq i_1<\ldots<i_p \leq N} \sum_{1\leq s \neq t \leq p} u_{i_s}^2 u_{i_t}^2\label{st1}\\
&\lesssim_p \frac{1}{N^{p-1}} \sum_{1\leq s\neq t \leq p} \sum_{1 \leq i_1,\ldots,i_p \leq N} u_{i_s}^2 u_{i_t}^2\nonumber\\
&= \frac{1}{N^{p-1}} \sum_{1\leq s\neq t\leq p} N^{p-2}\|\bm u\|^4 \lesssim_p \frac{1}{N}. \nonumber
\end{align}
where in \eqref{st1} we used the inequality $\left(\sum_{i=1}^n a_i\right)^2 \leq n \sum_{i=1}^n a_i^2$, for any sequence of real numbers $a_1,\ldots,a_n$.  \\ 

\noindent\textit{Proof of} $(2)$: Fix $\bm x \in \cC_N$ and $\bm u, \bm v \in S^{N-1}$.	Then, 	 		
\begin{align*}
& \e\left[G_{\bm u}(\bt) - G_{\bm v}(\bt)\right]^2\\ 
&= ((p-2)!)^2\e\left[\sum_{1\leq i_1<\ldots<i_p \leq N} J_{i_1\ldots i_p} \left(\sum_{1\leq s \neq t \leq p} (u_{i_s} u_{i_t} - v_{i_s} v_{i_t}) \prod_{a\in \{1, 2, \ldots, p\}\setminus\{s,t\}} \tau_{i_a}\right)\right]^2\\
& 
\lesssim_p \frac{1}{N^{p-1}}\sum_{1\leq i_1<\ldots<i_p \leq N} \left(\sum_{1\leq s \neq t \leq p} (u_{i_s} u_{i_t} - v_{i_s} v_{i_t}) \prod_{a\in \{1, 2, \ldots, p\}\setminus\{s,t\}} \tau_{i_a}\right)^2\\
& \lesssim_p \frac{1}{N^{p-1}}\sum_{1\leq i_1<\ldots<i_p \leq N} \sum_{1\leq s \neq t \leq p} (u_{i_s} u_{i_t} - v_{i_s} v_{i_t})^2\\ 
& \lesssim_p \frac{1}{N} \sum_{1 \leq i,j \leq N} (u_iu_j - v_iv_j)^2 \\ 
& = \frac{2}{N}  \left[1-\left(\sum_{i=1}^N u_i v_i\right)^2\right] \leq \frac{2}{N} \left[2-2\sum_{i=1}^N u_iv_i \right] \lesssim \frac{1}{N} \sum_{i=1}^N (u_i - v_i)^2.    
\end{align*} 
This completes the proof of Lemma \ref{tec2} (2). \qed \\

Using the lemma above we first show that $\e[\sup_{\bm u \in S^{N-1}} G_{\bm u}(\bt)] \lesssim_p 1$. We do this comparing the supremum of the Gaussian process $\{G_{\bm u}(\bt) : \bm u \in S^{N-1}\}$ with the supremum of the Gaussian process $\{H_{\bm u}: \bm u \in S^{N-1}\}$, where $H_{\bm u} = \sum_{i=1}^N g_i u_i$ and $g_1,\ldots,g_N$ are independent standard Gaussians. Now, by Lemma \ref{tec2} (2), there exists a constant $C:=C(p) > 0$, such that for $\bm u, \bm v \in S^{N-1}$, 
$$\e\left[G_{\bm u}(\bt) - G_{\bm v}(\bt)\right]^2 \leq \frac{C}{N} \sum_{i=1}^N (u_i - v_i)^2 =  \frac{C}{N} \e\left[H_{\bm u} - H_{\bm v}\right]^2.$$ Hence, by the Sudakov-Fernique inequality \cite[Theorem 1.1]{error}, 
\begin{align}\label{eq:Gx_expectation}
\e \left[\sup_{\bm u \in S^{N-1}} G_{\bm u}(\bt)\right]  \leq  \left(\frac{C}{N}\right)^{\frac{1}{2}} \e\left[\sup_{\bm u \in S^{N-1}} H_{\bm u}\right] & =   \left(\frac{C}{N}\right)^{\frac{1}{2}} \e\left[\left(\sum_{i=1}^N g_i^2\right)^{\frac{1}{2}}\right] \nonumber \\
&\leq  C^{\frac{1}{2}} : = D. 
\end{align}

Now, since by Lemma \ref{tec2} $(1)$ there exists a constant $K : = K(p) > 0$ such that $\sup_{\bm u \in S^{N-1}} \e \left[G_{\bm u} (\bt)^2\right] \leq \frac{K}{N}$, by the Borell-TIS inequality \cite[Theorem 2.1.1]{Gaussian}, for any $t > 0$, 
\begin{equation*}
\p\left(\sup_{\bm u \in S^{N-1}} G_{\bm u}(\bt) - \e \left[\sup_{\bm u \in S^{N-1}} G_{\bm u}(\bt)\right] > t\right) \leq e^{-\frac{N t^2}{2K} }.
\end{equation*}
This implies, by \eqref{eq:Gx_expectation},
\begin{equation*}
\p\left(\sup_{\bm u \in S^{N-1}} G_{\bm u}(\bt) > D+t\right) \leq e^{-\frac{N t^2}{2K} }.
\end{equation*} 
Then, taking $t = \sqrt{2K}$ in the inequality above gives, 
$$\p\left(\|\bm J_N(\bt)\|> D + \sqrt{2K}\right) \leq 2e^{-N},$$
since we have $\lambda_{\max}(\bm J_N(\bt)) \stackrel{D}{=} -\lambda_{\min}(\bm J_N(\bt))$, because  $\bm J_N(\bt) \stackrel{D}{=} - \bm J_N(\bt)$. Therefore, by an union bound, 
$$\p\left(\sup_{\bt \in \sa_N}\|\bm J_N(\bt)\|> D + \sqrt{2 K}\right) \leq 2^{N+1}e^{-N} = 2(e/2)^{-N}.$$ Hence, by the Borel-Cantelli lemma, $\limsup_{N\rightarrow \infty} \sup_{\bt \in \sa_N}\|\bm J_N(\bt)\| \leq D + \sqrt{2 K}$ with probability $1$, which establishes \eqref{eq:JNcondition} and hence, condition $(1)$ of Theorem \ref{chextension}.

\section{Proof of Corollary \ref{boundeddeg} } 
\label{sec:boundeddegpf}

We begin by showing that condition $(1)$ of Corollary \ref{boundeddeg} implies $\sup_{N \geq 1} \sup_{\bm x \in \cC_N} ||\bm J_N(\bm x)|| < \infty$ and, hence, condition $(1)$ of Theorem \ref{chextension}. To this end, fix $\bm x \in \cC_N$, and take $\bm u \in S^{N-1} := \{\bm t \in \mathbb{R}^N: \|\bm t\| = 1\}$. Then, 
\begin{align}\label{sh}
|\bm u^\top \bm J_N(\bm x) \bm u| = \Big|\sum_{1\leq i_1,i_2,\ldots,i_p\leq N} J_{i_1 i_2\ldots i_p} u_{i_1} u_{i_2} x_{i_3} \ldots x_{i_p}\Big| &\leq \sum_{1\leq i_1,i_2,\ldots,i_p\leq N} |J_{i_1 i_2\ldots i_p}| |u_{i_1}| |u_{i_2}| \nonumber\\&= (p-2)!~ |\bm u|^\top \bm D_{\bm J_N} |\bm u| \lesssim_p \|\bm D_{\bm J_N}\|,
\end{align} 
since $|\bm u| := (|u_1|,\ldots,|u_N|) \in S^{N-1}$. Taking supremum over all $\bm u \in S^{N-1}$ followed by the supremum over all $\bm x \in \sa_N$ and further followed by the supremum over all $N \geq 1$ throughout \eqref{sh}, we have:
$$\sup_{N \geq 1} \sup_{\bm x \in \cC_N} ||\bm J_N(\bm x)|| \lesssim_p \sup_{N\geq 1}\|\bm D_{\bm J_N}\| < \infty.$$

Next, we verify condition (2) in Theorem \ref{chextension}. To this end, we need the following lemma: 

\begin{lem}\label{oddp} For every $p \geq 2$, under the assumptions of Corollary \ref{boundeddeg}, $ |F_N^{(3)}(0)| = O(N)$, where $F^{(3)}(0)$ denotes the third derivative of $F_N(\beta)$ at $\beta=0$. 
\end{lem}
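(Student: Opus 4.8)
The plan is to compute $F_N^{(3)}(0)$ explicitly in terms of the cumulants of the Hamiltonian $H_N(\bm X)$ under the uniform measure on $\cC_N$, and then bound these cumulants combinatorially using condition (1) of Corollary \ref{boundeddeg}. Recall that $F_N(\beta) = \log Z_N(\beta)$ where $Z_N(\beta) = 2^{-N}\sum_{\bm X \in \cC_N} e^{\beta H_N(\bm X)}$, so $F_N$ is the cumulant generating function of $H_N(\bm X)$ under $\P_0$ (the uniform distribution on $\cC_N$). Hence $F_N^{(3)}(0) = \kappa_3(H_N(\bm X))$, the third cumulant of $H_N$ under $\P_0$. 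Since $\E_0[X_i] = 0$ and the coordinates $X_1,\dots,X_N$ are i.i.d.\ Rademacher under $\P_0$, we have $\E_0[H_N(\bm X)] = 0$ because every monomial $X_{i_1}\cdots X_{i_p}$ has at least one index of odd multiplicity (the diagonal-free condition forces all $p$ indices distinct, so every monomial has all multiplicities one). Therefore $F_N^{(3)}(0) = \E_0[H_N(\bm X)^3]$.

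Next I would expand $\E_0[H_N(\bm X)^3] = \sum_{\bm e, \bm f, \bm g \in [N]_p} w(\bm e) w(\bm f) w(\bm g)\, \E_0[X_{\bm e} X_{\bm f} X_{\bm g}]$. The expectation $\E_0[X_{\bm e}X_{\bm f}X_{\bm g}]$ is $1$ if every vertex appears an even number of times across the multiset $\bm e \cup \bm f \cup \bm g$, and $0$ otherwise. Since each of $\bm e,\bm f,\bm g$ contributes $p$ distinct vertices, the only way to get all even multiplicities is that the three hyperedges pairwise overlap in a way that each vertex used is covered exactly twice; in particular the support has size $\tfrac{3p}{2}$ (so $p$ must be even for a nonzero term when all three edges are distinct), or two of the edges coincide and then the third must be "doubled" within itself — impossible since its $p$ indices are distinct — so actually the surviving configurations are exactly those where the three edges are distinct and the vertex set splits into three pairwise-shared "pieces." The key structural observation is that in any surviving term, each vertex lies in exactly two of the three edges, so writing $\bm e \cap \bm f$, $\bm f \cap \bm g$, $\bm g \cap \bm e$ as the overlap sets, these are disjoint and partition the common support.

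The main obstacle — and the step requiring the most care — is bounding the resulting sum
$$\sum w(\bm e) w(\bm f) w(\bm g) \ind\{\text{each vertex in exactly two of } \bm e,\bm f,\bm g\}$$
by $O(N)$ using only $\sup_N \|\bm D_{\bm J_N}\| < \infty$. The idea is to think of a surviving triple as being glued along a "triangle" of co-degree interactions: fix the overlap sets and sum the weights. One can bound the sum by repeatedly invoking the co-degree matrix. Concretely, parametrize by a root vertex, then bound each subsequent summation over a shared vertex using $\max_{i_1}\sum_{i_2} d_{\bm J_N}(i_1,i_2) \lesssim_p \|\bm D_{\bm J_N}\|$ (cf.\ \eqref{eq:DN_bound}); each hyperedge beyond the first costs a bounded factor, and the free root contributes the single factor of $N$. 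Since there are only $O_p(1)$ combinatorial "shapes" of surviving triples (determined by how the $p$ slots of each edge are matched up), summing over all shapes still gives $O_p(N \cdot \|\bm D_{\bm J_N}\|^{O_p(1)}) = O(N)$. A clean way to organize this: absorb $|w(\bm e)|$ into the co-degree counts and show that for each fixed matching-pattern, the sum telescopes into a product of operator-norm-type bounds times $N$. This is the combinatorial heart of the argument; everything else is bookkeeping.
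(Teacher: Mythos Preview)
Your setup matches the paper: $F_N^{(3)}(0)=\E_0[H_N(\bm X)^3]$ since $\E_0[H_N]=0$; in the triple expansion the surviving terms are precisely those where every vertex has multiplicity two across the three hyperedges (so the support has size $3p/2$, $p$ must be even, and each pair of edges overlaps in exactly $p/2$ vertices); for odd $p$ the third moment vanishes. Up to here you and the paper agree.

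The gap is in your bounding step. You plan to ``bound each subsequent summation over a shared vertex using $\max_{i_1}\sum_{i_2} d_{\bm J_N}(i_1,i_2) \lesssim_p \|\bm D_{\bm J_N}\|$,'' citing \eqref{eq:DN_bound}. But \eqref{eq:DN_bound} is the \emph{reverse} inequality: it says $\|\bm D_{\bm J_N}\| \leq \max_{i_1}\sum_{i_2} d_{\bm J_N}(i_1,i_2)$. For a symmetric nonnegative matrix the maximum row sum can be arbitrarily larger than the spectral norm, so under only condition~(1) of Corollary~\ref{boundeddeg} the row-sum control you need is simply not available, and your telescoping scheme (root vertex plus bounded factors per additional edge) does not close.

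The paper's fix is short. Write $p=2q$ and denote the three $q$-blocks of shared indices by $A,B,C$. The surviving sum $\sum_{A,B,C}|J_{AB}||J_{BC}||J_{CA}|$ is bounded (term-by-term, since everything is nonnegative) by the larger sum in which the $p-2$ ``non-representative'' indices in each factor are summed \emph{independently} rather than tied across factors; picking one representative $i_1\in A$, $i_{q+1}\in B$, $i_{2q+1}\in C$ this larger sum is exactly
\[
\sum_{i_1,i_{q+1},i_{2q+1}} d_{\bm J_N}(i_1,i_{q+1})\,d_{\bm J_N}(i_{q+1},i_{2q+1})\,d_{\bm J_N}(i_{2q+1},i_1)
= \mathrm{Trace}\bigl(\bm D_{\bm J_N}^3\bigr)
\leq N\,\|\bm D_{\bm J_N}\|^3 = O(N),
\]
using only the spectral-norm assumption. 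So the correct route is a trace bound, not an $L_\infty$ row-sum bound.
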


The proof of the lemma is given below. First we show how it can be used to prove condition (2) in Theorem \ref{chextension}.  To begin with, note that condition (2) of Corollary \ref{boundeddeg} implies that 
\begin{align}\label{eq:FN_II}
F_N''(0) = \mathrm{Var}_0(H_N(\bs)) = \e_0 H_N^2(\bs) = (p!)^2\sum_{1 \leq i_1<\ldots<i_p \leq N} J_{i_1\ldots i_p}^2 = \Omega(N). 
\end{align} 
Hence, $\liminf_{N\rightarrow \infty} \frac{1}{N} F_N''(0) > 0$. Now, since by Lemma \ref{oddp}, $\limsup_{N\rightarrow \infty} \frac{1}{N} |F_N^{(3)}(0)| <\infty$, we can choose $\varepsilon > 0$ small enough, such that for all $N$ large enough, 
\begin{equation*}
\frac{\varepsilon}{6N} |F_N^{(3)}(0)| < \frac{F_N''(0)}{4N}.
\end{equation*} 
Therefore, because the fourth derivative $F_N^{(4)}(b) = \e_b H_N^4(\bs) \geq 0$ for all $b \geq 0$,  a Taylor expansion gives the following for all $\beta \in (0,\varepsilon)$:
\begin{equation*}
\frac{F_N(\beta)}{N} \geq \frac{\beta^2}{2N} F_N''(0) +  \frac{\beta^3}{6N} F_N^{(3)} (0) \geq \frac{\beta^2}{2N} F_N''(0) -  \frac{\beta^3}{6N} |F_N^{(3)} (0)| > \frac{\beta^2}{4N}F_N''(0) = \Omega(1),
\end{equation*}
where the last step uses \eqref{eq:FN_II}. This verifies  condition (2) of Theorem \ref{chextension} for all $\beta > 0$, by the monotonicity of $F_N$.

\subsubsection*{Proof of Lemma \ref{oddp}} To begin with observe that $F_N^{(3)}(0)=\e_0 H_N^3(\bs)$. Now, the proof of the lemma for odd $p$ is trivial. This is because, under $\p_0$, $\bs \stackrel{D}{=} - \bs$, and for odd $p$, $H_N(-\bs) = - H_N(\bs)$, which implies  $\e_0 H_N^3(\bs)$. Hence, we will assume that $p = 2 q$, for $q \geq 1$,  throughout the rest of the proof. Now, note that 
		\begin{equation}\label{exh3}
		 \e_0H_N^3(\bs) = \sum_{\substack{1 \leq i_1, \ldots, i_p \leq N \\ \text{distinct}}}  \sum_{\substack{1 \leq j_1, \ldots, j_p \leq N \\ \text{distinct}}} \sum_{\substack{1 \leq k_1, \ldots, k_p \leq N \\ \text{distinct}}}   J_{i_1, \ldots, i_p } J_{j_1, \ldots, j_p }  J_{k_1, \ldots, k_p }   \e_0\left( \prod_{s=1}^p X_{i_s} X_{j_s} X_{k_s} \right). 
		\end{equation}
Observe for each term in the sum above, the expectation is non-zero, if only if the multiplicity of each element in the multi-set $\{ i_1, \ldots, i_p \} \bigcup \{ j_1, \ldots, j_p\} \bigcup \{ k_1, \ldots, k_p\}$ is exactly $2$. This implies, the number of distinct elements in $\{ i_1, \ldots, i_p \} \bigcup \{ j_1, \ldots, j_p\} \bigcup \{ k_1, \ldots, k_p\}$ is  $3q$ and every pair of sets among  $\{ i_1, \ldots, i_p \}$, $\{ j_1, \ldots, j_p\}$, $\{ k_1, \ldots, k_p\}$  must have exactly $q$ elements in common. Therefore, from \eqref{exh3} and recalling the definition of the matrix $\bm D_{\bm J_N}$ from \eqref{eq:dJN} we get, 
\begin{align*}
\big|\e_0 H_N^3(\bs)\big| & \lesssim_p \sum_{\substack{1 \leq i_1, \ldots, i_q, i_{q+1}, \ldots, i_{2 q}, i_{2q+1}, \ldots, i_{3q} \leq N \\ \textrm{distinct}}} \big|J_{ i_1, \ldots, i_q, i_{q+1}, \ldots, i_{2 q}} J_{i_{q+1}, \ldots, i_{2 q} i_{2q+1}, \ldots, i_{3q} } J_{ i_{2q+1}, \ldots, i_{3q} i_1, \ldots i_q}\big| \nonumber \\ 
& \lesssim_p \sum_{\substack{1 \leq i_1,i_{q +1}, i_{2q+1} \leq N \\ \textrm{distinct}}} \bm d_{\bm J_N}(i_1, i_{q+1}) \bm d_{\bm J_N}(i_{q+1}, i_{2q+1}) \bm d_{\bm J_N}(i_1, i_{2q+1})  \nonumber \\  
& = \mathrm{Trace} (\bm D_{\bm J_N}^3)  \leq N \|\bm D_{\bm J_N}\| = O(N),
\end{align*}
where the last step uses the assumption that $\sup_{N \geq 1}\|\bm D_{\bm J_N}\| < \infty$. \qed

\section{Proof of Theorem \ref{sbmthr}}\label{proof6}

We start by proving Theorem \ref{sbmthr} (1). To this end, it suffices to verify Theorem \ref{chextension}. Note that condition $(1)$ is easily satisfied because the bounded maximum degree condition \eqref{eq:DN_bound} holds almost surely: 
$$\max_{1 \leq i \leq N} d_{\bm J_N}(i)= \frac{1}{N^{p-1}} \max_{1 \leq i \leq N} d_{\bm A_{H_N}}(i) = O(1),$$
since $d_{\bm A_{H_N}}(i) \leq N^{p-1}$, for all $1 \leq i \leq N$, in any $p$-uniform hypergraph $H_N$. Next, we verify condition (2) of Theorem \ref{chextension}. To this end, by the lower bound in \cite[Theorem 1.6]{CD16} (which is the mean-field lower bound to the Gibbs variational representation of the log-partition function), we have 
\begin{align}\label{lowbdgibbs}
\e {F}_N(\beta) & \geq \E\left[\sup_{\bm x \in [-1,1]^N}\left\{ \beta  \sum_{1 \leq i_1, \ldots, i_p \leq N} \e(J_{i_1\ldots i_p}) x_{i_1}\ldots x_{i_p} - \sum_{i=1}^N I(x_i) \right\} \right] \nonumber \\ 
& \geq \sup_{\bm x \in [-1,1]^N}\left\{ \frac{\beta}{N^{p-1}} \sum_{1 \leq i_1, \ldots, i_p \leq N} \e({a}_{i_1\ldots i_p}) x_{i_1}\ldots x_{i_p} - \sum_{i=1}^N I(x_i) \right\} . 
\end{align}
Now, take any $\bm t := (t_1,\ldots,t_K) \in [-1,1]^K$, and define $\bm x \in [-1,1]^N$ by taking $x_i := t_j$, if $i \in \cB_j$, where $\cB_1, \cB_2, \ldots, \cB_K$ are as in Definition \ref{defn:block}. Then, the term inside the supremum in the RHS of \eqref{lowbdgibbs} equals $N \phi_\beta(t_1,\ldots,t_K) + O(1)$ (recall the definition of the function $\phi_\beta(t_1,\ldots,t_K)$ from \eqref{eq:threshold_function}).  Hence, \eqref{lowbdgibbs} gives us, 
\begin{equation}\label{blocklb1}
\frac{\e F_N(\beta)}{N} \geq \sup_{(t_1, t_2, \ldots, t_K) \in [0,1]^K} \phi_\beta(t_1,\ldots,t_K) + o(1).
\end{equation}
The bound in \eqref{blocklb1} above combined with the definition of the threshold $\beta_{\mathrm{HSBM}}^*$ in \eqref{eq:beta_threshold} and now implies that for all $\beta >\beta_{\mathrm{HSBM}}^*$, $\liminf_{N\rightarrow \infty} \frac{1}{N} \e F_N(\beta) > 0$. Then by Lemma \ref{mcdiarmid} below, it follows that $\liminf_{N \rightarrow \infty} \frac{1}{N}F_{N}(\beta) > 0$ with probability $1$. This verifies condition (2) of Theorem \ref{chextension}, and shows that the MPL estimate $\hat \beta_N (\bm X)$ is $\sqrt N$-consistent for $\beta >\beta_{\mathrm{HSBM}}^*$.

\begin{lem}\label{mcdiarmid}
	Let $F_N(\beta)$ denote the log-partition function of the $p$-tensor stochastic block model as in Theorem \ref{sbmthr}. Then, for every $\beta > 0$, the sequence $F_N(\beta) - \e F_N(\beta)$ is bounded in probability.  
\end{lem}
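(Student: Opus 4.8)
The plan is to apply a bounded-differences (McDiarmid-type) concentration inequality to $F_N(\beta)$ viewed as a function of the independent edge-indicator random variables $\{a_{\bm e}\}_{\bm e}$ that define the HSBM. First I would set up the right probability space: the randomness in $F_N(\beta)$ comes entirely from the independent Bernoulli variables $a_{i_1 \ldots i_p}$ for $1 \le i_1 < \cdots < i_p \le N$, and $F_N(\beta) = \log Z_N(\beta)$ with $Z_N(\beta) = 2^{-N} \sum_{\bm x \in \cC_N} \exp\{\beta N^{1-p} \sum_{\bm e} a_{\bm e} x_{\bm e}\}$. So I would write $F_N(\beta) = \Psi(\{a_{\bm e}\})$ and estimate how much $\Psi$ changes when a single coordinate $a_{\bm e}$ is flipped.

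The key step is the bounded-differences estimate. Flipping one $a_{\bm e}$ from $0$ to $1$ (or vice versa) changes the Hamiltonian $H_N(\bm x)$ by at most a constant multiple of $N^{1-p} \cdot 1 \cdot (\text{number of ordered tuples giving the edge } \bm e) = O(N^{1-p})$ uniformly in $\bm x$, since $|x_{\bm e}| = 1$. Because $\log(\cdot)$ applied to a partition function is $1$-Lipschitz in the sup-norm of the Hamiltonian perturbation — concretely, if $H_N$ changes uniformly by at most $c$ then $F_N(\beta)$ changes by at most $\beta c$ — each coordinate of $\Psi$ has bounded difference $c_{\bm e} = O(\beta N^{1-p})$. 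There are $\binom{N}{p} = \Theta(N^p)$ coordinates, so $\sum_{\bm e} c_{\bm e}^2 = \Theta(N^p) \cdot O(N^{2-2p}) = O(N^{2-p})$. By McDiarmid's inequality,
\begin{equation*}
\P\left(|F_N(\beta) - \e F_N(\beta)| > t\right) \le 2 \exp\left(-\frac{2 t^2}{\sum_{\bm e} c_{\bm e}^2}\right) \le 2 \exp\left(-\frac{C t^2}{N^{2-p}}\right),
\end{equation*}
for a constant $C = C(\beta, p) > 0$. For $p \ge 2$ we have $N^{2-p} = O(1)$, so the right-hand side is $2 \exp(-C' t^2)$ with $C'$ independent of $N$; hence $F_N(\beta) - \e F_N(\beta)$ is bounded in probability (indeed it is uniformly sub-Gaussian, and for $p \ge 3$ it converges to $0$ almost surely via Borel--Cantelli). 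Combined with the lower bound \eqref{blocklb1} on $\frac{1}{N}\e F_N(\beta)$, this immediately gives $\liminf_{N\to\infty}\frac1N F_N(\beta) > 0$ almost surely whenever $\beta > \beta_{\mathrm{HSBM}}^*$, as used above.

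I expect the only mild subtlety — not a real obstacle — to be bookkeeping the combinatorial factors: one must be careful that a single hyperedge $\bm e = \{i_1,\dots,i_p\}$ corresponds to $p!$ ordered tuples in the symmetrized sum defining $H_N(\bm x)$, so the per-coordinate sensitivity is $O(p! \, \beta N^{1-p})$ rather than $O(\beta N^{1-p})$; since $p$ is fixed this is absorbed into the constant. Everything else is a direct invocation of McDiarmid's inequality, and the Lipschitz property of $\log$-partition functions under uniform Hamiltonian perturbations is standard (it follows by differentiating $\log Z_N$ along the perturbation and bounding the resulting Gibbs average). No new ideas beyond this are needed.
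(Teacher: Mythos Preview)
Your proposal is correct and follows essentially the same approach as the paper: view $F_N(\beta)$ as a function of the independent Bernoulli edge indicators, establish a per-coordinate bounded difference of order $\beta p!\,N^{1-p}$ via the Lipschitz property of the log-partition function under uniform Hamiltonian perturbations, and apply McDiarmid's inequality to get $\sum c_{\bm e}^2 = O(N^{2-p})$. The paper's proof is exactly this, with the same $p!$ bookkeeping you anticipated.
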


\begin{proof}
To start with, note that $ {F}_N(\beta)$ is a function of the collection of i.i.d. random variables $\mathcal{A} := \{A_{i_1\ldots i_p}\}_{1\leq i_1<\ldots < i_p \leq N}$, and so, it is convenient to denote $ {F}_N(\beta)$ by $ {F}_{N,\beta}(\mathcal{A})$. Let us take $\mathcal{A}' := \{A'_{i_1\ldots i_p}\}_{1\leq i_1<\ldots< i_p\leq N}$, where $A'_{123\ldots p} = 1- A_{123\ldots p}$ and $A'_{i_1\ldots i_p} = A_{i_1\ldots i_p}$ for all $(i_1,\ldots,i_p) \neq (1,2,3,\ldots,p)$.  Note that 
\begin{equation*}
\left| \sum_{i_1<\cdots<i_p} A_{i_1\ldots i_p} X_{i_1}\cdots X_{i_p} - \sum_{i_1<\cdots<i_p} A_{i_1\ldots i_p}' X_{i_1}\cdots X_{i_p}\right|= |X_1 X_2\ldots X_p| = 1.
\end{equation*} 
Hence,  
$$\exp\left\{\frac{\beta p! }{N^{p-1}} \sum_{i_1<\cdots<i_p} A_{i_1\ldots i_p} X_{i_1}\cdots X_{i_p}\right\} \leq \exp\left\{\frac{\beta p! }{N^{p-1}}\right\}\exp\left\{\frac{\beta p!}{N^{p-1}} \sum_{i_1<\cdots<i_p} A_{i_1\ldots i_p}' X_{i_1}\cdots X_{i_p}\right\}.$$
The above inequality implies that $ {F}_{N,\beta}(\mathcal{A}) \leq  {F}_{N,\beta}(\mathcal{A}') + \beta p! N^{1-p}$. Similarly, we also have $ {F}_{N,\beta}(\mathcal{A}') \leq  {F}_{N,\beta}(\mathcal{A}) + \beta p!  N^{1-p}$, and hence, $$\left| {F}_{N,\beta}(\mathcal{A}) -  {F}_{N,\beta}(\mathcal{A}')\right| \leq \beta p!  N^{1-p}.$$ Of course, the above arguments hold if $\mathcal{A}'$ is obtained by flipping any arbitrary entry of $\mathcal{A}$ (not necessarily the $(1,2,\ldots,p)$-th entry) and keeping all other entries unchanged. Hence, the assumption of McDiarmid's inequality \cite{mcdiarmid} holds with bounding constants $c_{i_1\ldots i_p} = \beta p! N^{1-p}$. Therefore, for every $t > 0$:
$$\mathbb{P}\left(\left| {F}_N(\beta) - \mathbb{E}  {F}_N(\beta)\right| \geq t\right) \leq 2\exp\left\{-\frac{2t^2}{\sum_{ i_1<\ldots<i_p}c_{i_1\ldots i_p}^2}\right\} \leq 2\exp\left\{-\frac{2t^2 N^{p-2}}{\beta^2(p!)^2}\right\},$$ 
which completes the proof of the lemma. 	 
\end{proof}

We will now use Lemma \ref{mcdiarmid} to prove Theorem \ref{sbmthr} (2). To this end, we will show that
\begin{align}\label{eq:FNbeta_H}
\e F_N(\beta) = O(1), \quad \text{ for } \beta < \beta_{\mathrm{HSBM}}^*, 
\end{align} 
the expectation in \eqref{eq:FNbeta_H} being taken with respect to the randomness of the HSBM. To see why this implies Theorem \ref{sbmthr} (2), assume, on the contrary, that there is a sequence of estimates which is consistent for $\beta < \beta_{\mathrm{HSBM}}^*$.  Using this sequence of estimates we can then construct a consistent sequence of tests $\{\phi_N\}_{N \geq 1}$ for the following hypothesis testing problem:\footnote{A sequence of tests $\{\phi_N\}_{N \geq 1}$ is said to be {\it consistent} 
	if both its Type I and Type II errors converge to zero as $N \rightarrow \infty$, that is, $\lim_{N\rightarrow\infty}\E_{H_0}\phi_N=0$, and the power $\lim_{N\rightarrow\infty}\E_{H_1}\phi_N=1$.}  
\begin{align}\label{eq:hypothesis} 
H_0: \beta = \beta_1\quad\quad\textrm{versus}\quad\quad H_1: \beta=\beta_2, 
\end{align}
if $\beta_1 < \beta_2 < \beta_{\mathrm{HSBM}}^*$. To this end, denote by $\mathbb{Q}_{\beta,p}$ the joint distribution of the HSBM and the $p$-tensor Ising model with parameter $\beta$.  Then a simple calculation shows that for any two positive real numbers $\beta_1 < \beta_2$, the Kullback-Leibler (KL) divergence between the joint measures $\mathbb{Q}_{\beta_1,p}$ and $\mathbb{Q}_{\beta_2,p}$ is given by:
\begin{align}\label{eq:DKL}
D_N(\mathbb{Q}_{\beta_1,p}\| \mathbb{Q}_{\beta_2,p}) = \e D_N(\p_{\beta_1,p}\| \p_{\beta_2,p}) =  \e F_N(\beta_2) - \e F_N(\beta_1) -(\beta_2-\beta_1) \e F_N'(\beta_1),
\end{align} 
where, as before, the expectation in \eqref{eq:DKL} is taken with respect to the randomness of the HSBM.  
Now, by the monotonicity of $F_N'(\cdot)$,		
$$0 = (\beta_2-\beta_1)F_N'(0) \leq (\beta_2-\beta_1)F_N'(\beta_1)\leq \int_{\beta_1}^{\beta_2} F_N'(t)\mathrm dt = F_N(\beta_2)-F_N(\beta_1).$$ 
Hence, by  \eqref{eq:FNbeta_H} and \eqref{eq:DKL}, $D_N(\mathbb{Q}_{\beta_1,p}\| \mathbb{Q}_{\beta_2,p}) = O(1)$. Then, by \cite[Proposition 6.1]{BM16}, there cannot exist any sequence of consistent tests for the hypothesis \eqref{eq:hypothesis}, which leads to a contradiction. This completes the proof of Theorem \ref{sbmthr} (2).

\subsection{Proof of (\ref{eq:FNbeta_H})} The proof of \eqref{eq:FNbeta_H} has the following two steps: 
\begin{itemize}
	\item[(I)] Define a new $p$-tensor Ising model on $N$ nodes, with interaction tensor $\tilde{\bm J}_N := \e {\bm J_N}$. We will call this model $\mathcal M_0 $. The first step in the proof of \eqref{eq:FNbeta_H} is to show that the log-partition function $\tilde{F}_N$ of the model $\mathcal M_0 $ is bounded, for every $\beta < \beta_{\mathrm{HSBM}}^*$.
	
	\item[(II)] The second step is to show that the expected log-partition function $\e F_N(\beta)$ of the original model is bounded, for $\beta < \beta_{\mathrm{HSBM}}^*$, by comparing it with the log-partition function $\tilde{F}_N$ of the model $\mathcal M_0 $. The result in \eqref{eq:FNbeta_H} then follows by an application of Lemma \ref{mcdiarmid}. 
\end{itemize}

\subsubsection{Proof of Step \em{(I)}}

Throughout this section we fix $\beta < \beta_{\mathrm{HSBM}}^*$ and denote by $\p_{\beta,\mathcal M_0 }$ the probability measure corresponding to the model $\mathcal M_0 $ at the parameter $\beta$ and $\e_{\beta, \mathcal M_0 }$  the expectation with respect to the probability measure $\p_{\beta,\mathcal M_0 }$.

\begin{lem}\label{lm:logpartition_I} Denote by $\tilde{F}_N(\beta)$ the log-partition function of the model $\cM_0$. Then for $\beta < \beta_{\mathrm{HSBM}}^*$, $\limsup_{N \rightarrow \infty} \tilde{F}_N(\beta) < \infty $. 
\end{lem}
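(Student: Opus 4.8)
The plan is to compute the log-partition function $\tilde{F}_N(\beta)$ of the averaged model $\cM_0$ essentially exactly, by exploiting the block structure of $\tilde{\bm J}_N = \e\bm J_N$. Since $\e a_{i_1\ldots i_p} = \theta_{j_1\ldots j_p}$ when $(i_1,\ldots,i_p)\in\cB_{j_1}\times\cdots\times\cB_{j_p}$, the Hamiltonian of $\cM_0$ evaluated at a configuration $\bm x\in\cC_N$ depends on $\bm x$ only through the block sums $S_j(\bm x) := \sum_{i\in\cB_j} x_i$, $j=1,\ldots,K$. Concretely, up to $O(1)$ corrections coming from the diagonal/coincidence terms (which contribute a bounded amount since $\bm J_N$ has entries of order $N^{1-p}$ and there are $O(N^{p-1})$ such terms),
\[
H_N^{\cM_0}(\bm x) = \frac{p!}{N^{p-1}}\sum_{j_1\le\cdots\le j_p}\theta_{j_1\ldots j_p}\,(\text{symmetrized product of }S_{j_\ell}) + O(1) = \frac{1}{N^{p-1}}\sum_{1\le j_1,\ldots,j_p\le K}\theta_{j_1\ldots j_p}\prod_{\ell=1}^p S_{j_\ell}(\bm x) + O(1).
\]
First I would make this reduction precise, so that
\[
\tilde{Z}_N(\beta) = \frac{1}{2^N}\sum_{\bm x\in\cC_N}\exp\Big\{\frac{\beta}{N^{p-1}}\sum_{j_1,\ldots,j_p}\theta_{j_1\ldots j_p}\prod_{\ell=1}^p S_{j_\ell}(\bm x)\Big\}e^{O(1)}.
\]

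Next I would pass from the sum over $\bm x$ to a sum over the block magnetizations $m_j := S_j(\bm x)/|\cB_j| \in [-1,1]$, $|\cB_j|\approx N\lambda_j$. The number of configurations in block $j$ with a given $S_j$ is a binomial coefficient, so by Stirling's formula the multiplicity is $\exp\{N\lambda_j(\log 2 - I(m_j)) + O(\log N)\}$, where $I$ is the binary entropy of the excerpt. Combining over the $K$ blocks and dividing by $2^N = \prod_j 2^{N\lambda_j}$, the $\log 2$ terms cancel and we get
\[
\tilde{F}_N(\beta) = \log\Big(\sum_{\text{admissible }(m_1,\ldots,m_K)} \exp\Big\{N\phi_\beta(m_1,\ldots,m_K) + O(\log N)\Big\}\Big) + O(1),
\]
where $\phi_\beta$ is exactly the function in \eqref{eq:threshold_function} (the polynomial term reproduces $\beta\sum\theta_{j_1\ldots j_p}\prod\lambda_{j_\ell}t_{j_\ell}$ and the entropy terms reproduce $-\sum_j\lambda_j I(m_j)$). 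Since there are only $O(N^K)$ admissible values of $(m_1,\ldots,m_K)$, this sum is at most $O(N^K)\exp\{N\sup_{[-1,1]^K}\phi_\beta\}$, hence
\[
\tilde{F}_N(\beta) \le N\sup_{(t_1,\ldots,t_K)\in[-1,1]^K}\phi_\beta(t_1,\ldots,t_K) + O(\log N).
\]
Here one uses that $\phi_\beta$ is even in each coordinate (as $I$ is even and $\theta_{j_1\ldots j_p}\ge 0$, $p\ge 2$, so the supremum over $[-1,1]^K$ equals that over $[0,1]^K$); I would note this symmetry explicitly. By the definition of $\beta^*_{\mathrm{HSBM}}$ in \eqref{eq:beta_threshold}, for $\beta < \beta^*_{\mathrm{HSBM}}$ the supremum $\sup_{[0,1]^K}\phi_\beta = 0$. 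Therefore $\tilde{F}_N(\beta) \le O(\log N)$, which is $o(N)$ but not yet $O(1)$.

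The main obstacle is upgrading the crude bound $O(\log N)$ to the sharp bound $\limsup_N \tilde{F}_N(\beta) < \infty$. The point is that when $\sup\phi_\beta = 0$ the maximizer is $t=0$ (strictly, for $\beta$ below threshold, since $\phi_\beta$ is strictly negative away from the origin — $\phi_\beta(\bm 0)=0$ and the Hessian of $\phi_\beta$ at $\bm 0$ is negative definite because the entropy term dominates the $p$-linear term near the origin for $p\ge 2$). So I would localize: split the sum over block magnetizations into a neighborhood $\|\bm m\|\le\delta$ of the origin and its complement. On the complement, $\phi_\beta(\bm m) \le -c < 0$ by compactness and strict negativity, so that piece contributes $O(N^K)e^{-cN}\to 0$. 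On the neighborhood, a second-order Taylor expansion gives $\phi_\beta(\bm m)\le -c'\|\bm m\|^2$ for some $c'>0$, so the contribution is bounded by $\sum_{\bm m}\exp\{-c' N\|\bm m\|^2\}$; since $\bm m$ ranges over a lattice of spacing $\asymp 1/N$ in each coordinate, this is a Riemann sum for $\int_{\mathbb{R}^K}\exp\{-c'N\|u\|^2\}\,N^K\,du = O(1)$ (the $N^{K/2}$ Gaussian volume cancels against $N^{K/2}$ from the lattice count). Hence the whole sum is $O(1)$, giving $\limsup_N\tilde{F}_N(\beta) < \infty$. The delicate bookkeeping is controlling the error terms in the Stirling approximation uniformly down to $\|\bm m\|\asymp 1/\sqrt N$ and making sure the $O(1)$ diagonal corrections to the Hamiltonian really are uniformly bounded; both are routine but need to be done carefully, and I would relegate them to a short computation.
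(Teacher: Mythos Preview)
Your approach is essentially the paper's: reduce to block magnetizations, use Stirling to rewrite the partition sum as $\sum_{\bm m}\exp\{N\phi_\beta(\bm m)\}\times(\text{prefactor})$, then show the sum is $O(1)$ via a quadratic upper bound on $\phi_\beta$ and a Gaussian Riemann sum. There are, however, two genuine bookkeeping errors that break the argument as written.

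First, absorbing the Stirling prefactor into $O(\log N)$ in the exponent is fatal for the sharp bound. Stirling gives
\[
\binom{|\cB_j|}{|\cB_j|(1+m_j)/2}=2^{|\cB_j|}e^{-|\cB_j|I(m_j)}\cdot\Theta\big(|\cB_j|^{-1/2}\big),
\]
so across the $K$ blocks there is an explicit $\Theta(N^{-K/2})$ factor. If you hide this as $e^{O(\log N)}$, after exponentiating you only control the partition sum up to an unspecified polynomial factor; and your Riemann sum $\sum_{\bm m}e^{-c'N\|\bm m\|^2}$ over a lattice of spacing $\asymp 1/N$ is $\Theta(N^{K/2})$, \emph{not} $O(1)$ (your parenthetical cancellation is mis-counted: $N^K$ lattice points per unit volume times Gaussian volume $N^{-K/2}$ gives $N^{K/2}$). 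The two $N^{K/2}$ factors only cancel if you carry the Stirling prefactor $N^{-K/2}$ explicitly, which is exactly what the paper does.

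Second, $\phi_\beta$ is \emph{not} even in each coordinate (the multilinear term is not). What is true, and sufficient, is $\phi_\beta(\bm t)\le\phi_\beta(|\bm t|)$, which follows from $\theta_{j_1\ldots j_p}\ge 0$ and the evenness of $I$; this reduces the sup over $[-1,1]^K$ to $[0,1]^K$.

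Finally, your route to the quadratic bound (negative-definite Hessian near $\bm 0$, compactness plus strict negativity away) does work, but the paper's device is cleaner and global: since $\beta<\beta^*_{\mathrm{HSBM}}$, pick $\varepsilon\in(0,1)$ with $\beta/(1-\varepsilon)<\beta^*_{\mathrm{HSBM}}$; then $\phi_{\beta/(1-\varepsilon)}\le 0$ rearranges to $\phi_\beta(\bm t)\le-\varepsilon\sum_j\lambda_jI(t_j)\le-\tfrac{\varepsilon}{2}\sum_j\lambda_jt_j^2$ on all of $[0,1]^K$. This avoids the case split and the Hessian analysis (which, for $p=2$, would itself require justification).
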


\noindent {\it Proof of Lemma} \ref{lm:logpartition_I}:  Denote the Hamiltonian of the model $\mathcal M_0 $ by $\tilde{H}_N(\bs)$, that is, 
$$\tilde{H}_N(\bs) : = \frac{1}{N^{p-1}} \sum_{1 \leq i_1, i_2, \ldots, i_p \leq N} \E(a_{i_1 i_2 \ldots i_p}) X_{i_1} X_{i_2} \ldots X_{i_p}.$$  
For each $\bs \in \sa_N$ and $1 \leq j \leq K$, define $S_j(\bs):= \sum_{i \in \cB_j} X_i$. With these notations, we have $\tilde{H}_N(\bs) = \overline{H}_N(\bs) + O(1)$, where
\begin{align}\label{eq:HN_tensor}
\overline{H}_N(\bs) = \frac{1}{N^{p-1} }  \sum_{1 \leq j_1,\ldots, j_p \leq K} \theta_{j_1\ldots j_p}  \prod_{\ell=1}^{p} S_{j_\ell}(\bs).
\end{align}  
Let us define $\overline{Z}_N(\beta) := \frac{1}{2^N} \sum_{\bs \in \sa_N} e^{\beta \overline{H}_N(\bs)}$. Since $\tilde{F}_N(\beta) = \log \overline{Z}_N(\beta) + O(1)$, it suffices to show that $\overline{Z}_N(\beta) = O(1)$. Towards this, for each $1 \leq j\leq K$, define the sets:
$$I_j:= \left\{-1,~ -1+\frac{2}{|\cB_j|},~ -1+ \frac{4}{|\cB_j|},~\ldots,~ 1-\frac{2}{|\cB_j|},~ 1\right\}\quad~\textrm{and}\quad A_s(j) := \{\bs \in \sa_N: S_j(\bs) = s\}.$$ 
Recall, $\cB_j=(N \sum_{i=1}^{j-1}\lambda_i, N \sum_{i=1}^j \lambda_i] \bigcap [N]$, hence $| |\cB_j| - N \lambda_j |\leq 2$. Now, note that 
\begin{align}
\overline{Z}_N(\beta) & = \frac{1}{2^N} \sum_{(\ell_1,\ldots,\ell_K) \in I_1 \times \cdots \times I_K} e^{\frac{\beta}{N^{p-1}} \sum_{1\leq j_1,\ldots,j_p\leq K} \theta_{j_1 \ldots j_p} \prod_{m=1}^p \ell_{j_m} |\cB_{j_m}|} \left| \bigcap_{j=1}^K A_{\ell_j |\cB_j|}(j)\right| \nonumber \\ 
\label{fe} &\lesssim \frac{1}{2^N} \sum_{(\ell_1,\ldots,\ell_K) \in I_1 \times \cdots \times I_K} e^{N\beta \sum_{1\leq j_1,\ldots,j_p\leq K} \theta_{j_1 \ldots j_p} \prod_{m=1}^p \lambda_{j_m} \ell_{j_m}} \left| \bigcap_{j=1}^K A_{\ell_j |\cB_j|}(j)\right|\\ 
\label{eq:T12} &= T_1 + T_2 ,
\end{align}
where the term $T_1$ is obtained by restricting the sum in the RHS of \eqref{fe} to the set $(I_1\times\cdots \times I_K) \bigcap [-\frac{1}{2} , \frac{1}{2}]^K$ and the term $T_2$ is the sum restricted to the set $(I_1\times\cdots \times I_K) \bigcap ([-\frac{1}{2} , \frac{1}{2}]^K)^c$. 

Let us bound $T_1$ first. Note that  
$$|A_{\ell_j |\cB_j|}(j)|= {|\cB_j| \choose \frac{|\cB_j|(1+\ell_j)}{2}}.$$
Then by the Stirling's approximation of the binomial coefficient (see, for example, \cite[Lemma B.5]{mlepaper}) and using the fact that the sets $\cB_1, \cB_2, \ldots, \cB_K$ are disjoint, we have for all $(\ell_1,\ldots,\ell_K) \in [-\frac{1}{2} , \frac{1}{2}]^K$,
$$\left| \bigcap_{j=1}^K A_{\ell_j |\cB_j|}(j)\right| = 2^N \exp\left\{-\sum_{j=1}^K |\cB_j| I(\ell_j) \right\} O\left(\frac{1}{\sqrt{\prod_{j=1}^K |\cB_j|}}\right).$$
Hence, denoting $\bm \ell := (\ell_1,\ldots,\ell_K)$ and $\cI := (I_1 \times \cdots \times I_K)$ gives, 
\begin{align*}
T_1 &=  \sum_{\bm \ell \in \cI \bigcap \left[-\frac{1}{2} , \frac{1}{2}\right]^K} e^{ N\beta \sum_{1\leq j_1,\ldots,j_p\leq K} \theta_{j_1 \ldots j_p} \prod_{m=1}^p \lambda_{j_m} \ell_{j_m} -\sum_{j=1}^K |\cB_j| I(\ell_j) } O\left(\frac{1}{\sqrt{\prod_{j=1}^K |\cB_j|}}\right)\\ 
&\lesssim \sum_{\bm \ell \in \cI \bigcap \left[-\frac{1}{2} , \frac{1}{2}\right]^K} e^{ N\beta \sum_{1\leq j_1,\ldots,j_p\leq K} \theta_{j_1 \ldots j_p} \prod_{m=1}^p \lambda_{j_m} \ell_{j_m} - N\sum_{j=1}^K \lambda_j I(\ell_j) } O\left(\frac{1}{\sqrt{\prod_{j=1}^K |\cB_j|}}\right)\\&= \sum_{\bm \ell \in \cI\bigcap \left[-\frac{1}{2} , \frac{1}{2}\right]^K} e^{N\phi_\beta(\ell_1,\ldots,\ell_K) } O\left(\frac{1}{\sqrt{\prod_{j=1}^K |\cB_j|}}\right)
\end{align*} 
Now, since $\beta < \beta_{\mathrm{HSBM}}^*$ we can choose $\varepsilon \in (0,1)$, such that $\beta/(1 - \varepsilon) < \beta_{\mathrm{HSBM}}^*$. Hence, recalling \eqref{eq:beta_threshold}, 
$$\phi_{\beta/(1-\varepsilon)}(\ell_1,\ldots,\ell_K) \leq 0,\quad\textrm{that is, }\quad
\phi_{\beta}(\ell_1,\ldots,\ell_K) \leq -\varepsilon \sum_{j=1}^K \lambda_j I(\ell_j) \leq -\frac{\varepsilon}{2} \sum_{j=1}^K \lambda_j \ell_j^2,$$
where the last step uses $I(x) \geq x^2/2$.  Then by a Riemann sum approximation (see \cite[Lemma B.2]{mlepaper}), 
\begin{align} 
\label{eq:T11}T_1 &\lesssim \sum_{\bm \ell \in \cI\bigcap \left[-\frac{1}{2} , \frac{1}{2}\right]^K} e^{-\frac{N\varepsilon}{2} \sum_{j=1}^K \lambda_j \ell_j^2 } O\left(\frac{1}{\sqrt{\prod_{j=1}^K |\cB_j|}}\right)  \\ 
&\leq  \frac{1}{2^{K}} O\left(\sqrt{\prod_{j=1}^K |\cB_j|}\right) \int\limits_{\mathbb{R}^K} \exp\left\{-\frac{N\varepsilon}{2} \sum_{j=1}^K \lambda_j x_j^2 \right\}~\mathrm dx_1\ldots \mathrm dx_K + O(1) \nonumber \\ 
&= \frac{1}{2^{K}} O\left(\sqrt{\prod_{j=1}^K \frac{|\cB_j|}{N}}\right) \int\limits_{\mathbb{R}^K} \exp\left\{-\frac{\varepsilon}{2} \sum_{j=1}^K \lambda_j y_j^2 \right\}~\mathrm dy_1\ldots \mathrm dy_K + O(1) \nonumber \\ 
\label{eq:T1} &\lesssim \int\limits_{\mathbb{R}^K} \exp\left\{-\frac{\varepsilon}{2} \sum_{j=1}^K \lambda_j y_j^2 \right\}~\mathrm dy_1\ldots \mathrm dy_K + O(1) = O(1) .
\end{align} 

Now, we bound $T_2$. For this we need the following combinatorial estimate:
\begin{obs}\cite[Equation (5.4)]{talagrand} For every integer $s$ and positive integer $m$, 
	\begin{equation}\label{new}
	\left|\left\{\bm x \in \{-1, 1\}^m: \sum_{i=1}^m x_i =  s\right\}\right| \leq 2^{m} e^{-m I\left(\frac{s}{m}\right)}. 
	\end{equation}  
\end{obs}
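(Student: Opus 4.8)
The plan is to recognize the cardinality on the left of \eqref{new} as $2^m$ times the probability that a sum of $m$ independent Rademacher signs equals $s$, and then to bound that probability by a Chernoff estimate whose exponent is precisely $m I(s/m)$. First dispose of the degenerate cases: if $|s|>m$ or $s$ and $m$ have opposite parity, the set $\{\bm x\in\{-1,1\}^m:\sum_i x_i=s\}$ is empty and there is nothing to prove. Otherwise, let $\varepsilon_1,\dots,\varepsilon_m$ be i.i.d.\ uniform on $\{-1,1\}$ and put $S_m:=\sum_{i=1}^m\varepsilon_i$; a direct count gives
\[
\left|\left\{\bm x\in\{-1,1\}^m:\sum_{i=1}^m x_i=s\right\}\right|=2^m\,\p(S_m=s).
\]

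Next I would apply the exponential Markov inequality. Since $\e[e^{\lambda\varepsilon_i}]=\cosh\lambda$, independence gives $\e[e^{\lambda S_m}]=(\cosh\lambda)^m$ for all $\lambda\in\R$, and for any $\lambda$ with $\lambda s\geq 0$ we get $\p(S_m=s)\leq\min\{\p(S_m\geq s),\p(S_m\leq s)\}\leq e^{-\lambda s}(\cosh\lambda)^m$. Since the unconstrained minimizer of $e^{-\lambda s}(\cosh\lambda)^m$ over $\lambda\in\R$ already satisfies $\lambda s\geq 0$, we may optimize freely:
\[
\p(S_m=s)\leq\exp\Big(-m\sup_{\lambda\in\R}\big\{\lambda\,(s/m)-\log\cosh\lambda\big\}\Big).
\]
The last ingredient is the elementary fact that $\log\cosh$ and $I$ are convex conjugates on $[-1,1]$: the supremum $\sup_\lambda\{\lambda t-\log\cosh\lambda\}$ is attained where $\tanh\lambda=t$, i.e.\ at $\lambda^\ast=\tfrac12\log\tfrac{1+t}{1-t}$, and substituting back gives exactly $\tfrac{1+t}{2}\log(1+t)+\tfrac{1-t}{2}\log(1-t)=I(t)$. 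Combining the three displays with $t=s/m$ yields \eqref{new}.

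A purely combinatorial route is also available and perhaps cleaner: the set has size $\binom{m}{(m+s)/2}$, and the textbook bound $\binom{m}{k}\leq e^{mH(k/m)}$, where $H(q):=-q\log q-(1-q)\log(1-q)$, together with the one-line identity $H\!\big(\tfrac{1+t}{2}\big)=\log 2-I(t)$, again produces $2^m e^{-mI(s/m)}$. Either way there is no serious obstacle; the only points needing a moment's care are the parity/range edge cases where the left-hand set is empty, and the Legendre-duality identity (equivalently the $H$-versus-$I$ computation), both of which are one-line verifications. One could also simply quote \eqref{new} from \cite{talagrand}, but the short Chernoff argument keeps the section self-contained at negligible cost.
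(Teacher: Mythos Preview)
Your argument is correct: both the Chernoff/Legendre-duality route and the binomial-coefficient route you sketch are valid and standard, and either one yields \eqref{new} immediately. Note, however, that the paper does not actually prove this observation at all---it simply quotes it as \cite[Equation (5.4)]{talagrand} and moves on---so there is no ``paper's own proof'' to compare against; your self-contained derivation is a bonus rather than a deviation.
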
	
	
Using the bound in \eqref{new} and recalling that the sets $\cB_1, \cB_2, \ldots, \cB_K$ are disjoint, we have for every $(\ell_1, \ell_2, \ldots, \ell_K) \in (I_1, I_2, \ldots, I_K)$, 
 \begin{equation*}
\left| \bigcap_{j=1}^K A_{\ell_j |\cB_j|}(j)\right|  \leq 2^N   \exp\left\{-\sum_{j=1}^K |\cB_j| I(\ell_j)\right\}.
\end{equation*}
Hence, by following the arguments used to obtain \eqref{eq:T11} we get,  
\begin{align}\label{eq:T2}
T_2 \lesssim \sum_{\bm \ell \in \cI\bigcap \left(\left[-\frac{1}{2} , \frac{1}{2}\right]^K\right)^c} e^{-\frac{N\varepsilon}{2} \sum_{j=1}^K \lambda_j \ell_j^2 } &\leq \left(\prod_{j=1}^K (|\cB_j|+1)\right)\max_{\bm \ell \in \cI\bigcap \left(\left[-\frac{1}{2} , \frac{1}{2}\right]^K\right)^c}  e^{-\frac{N\varepsilon}{2} \sum_{j=1}^K \lambda_j \ell_j^2 } 
\nonumber\\
&\leq \left(\prod_{j=1}^K(N\lambda_j+1)\right)\max_{1\leq j\leq K}  e^{-\frac{N\varepsilon \lambda_j}{8}} = o(1). 
\end{align}
This shows, combining \eqref{eq:T12}, \eqref{eq:T1}, and \eqref{eq:T2},  that $\overline{Z}_N(\beta) = O(1)$ for all $\beta < \beta_{\mathrm{HSBM}}^*$, completing the proof of Lemma \ref{lm:logpartition_I}.

\subsubsection{Proof of Step \em{(II)}}  We now show that $\e F_N(\beta)$ is bounded, for $\beta < \beta_{\mathrm{HSBM}}^*$, which will allow us to conclude, using Lemma \ref{mcdiarmid}, that  $F_N(\beta) = O(1)$ with probability $1$, for all $\beta < \beta_{\mathrm{HSBM}}^*$, that is, \eqref{eq:FNbeta_H} holds. 

\begin{lem}\label{st2} For every $\beta < \beta_{\mathrm{HSBM}}^*$, $\limsup_{N\rightarrow \infty} \e F_N(\beta) < \infty$. 
\end{lem}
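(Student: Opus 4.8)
The plan is to derive the bound on $\e F_N(\beta)$ from the \emph{deterministic} bound on the log-partition function $\tilde F_N$ of the averaged model $\cM_0$ established in Lemma \ref{lm:logpartition_I}, by a H\"older interpolation that exploits the fact that $\beta$ is \emph{strictly} below the threshold. First I would decompose the Hamiltonian of the HSBM Ising model as $H_N(\bm X) = \tilde H_N(\bm X) + R_N(\bm X)$, where $\tilde H_N$ is the Hamiltonian of $\cM_0$ and
\[
R_N(\bm X) = \frac{p!}{N^{p-1}} \sum_{1 \le i_1 < \cdots < i_p \le N} \big(a_{i_1\ldots i_p} - \e a_{i_1\ldots i_p}\big) X_{i_1}\cdots X_{i_p}
\]
collects the centered fluctuations of the adjacency tensor about its mean. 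Since $\beta < \beta_{\mathrm{HSBM}}^*$, I can fix $q > 1$ with $q\beta < \beta_{\mathrm{HSBM}}^*$ and let $q'$ be its conjugate exponent. Applying H\"older's inequality to $Z_N(\beta) = \frac{1}{2^N}\sum_{\bm X} e^{\beta\tilde H_N(\bm X)} \cdot e^{\beta R_N(\bm X)}$ with these exponents, and recognizing $\frac{1}{2^N}\sum_{\bm X} e^{q\beta\tilde H_N(\bm X)}$ as the partition function of $\cM_0$ at parameter $q\beta$, gives
\[
F_N(\beta) \;\le\; \frac{1}{q}\,\tilde F_N(q\beta) \;+\; \frac{1}{q'}\,\log\Big(\frac{1}{2^N}\sum_{\bm X \in \{-1,1\}^N} e^{q'\beta R_N(\bm X)}\Big).
\]

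Next I would take expectations over the randomness of the HSBM. The first term is $O(1)$ by Lemma \ref{lm:logpartition_I} applied at the parameter $q\beta$, which is still below the threshold. For the second term, I would move the expectation inside the logarithm using Jensen's inequality and then inside the finite sum over $\bm X$, which reduces the problem to a bound on $\e \exp\{ q'\beta R_N(\bm X)\}$ that is uniform in $\bm X \in \{-1,1\}^N$ and in $N$. For each fixed $\bm X$, $R_N(\bm X)$ is a weighted sum of the independent, centered random variables $a_{i_1\ldots i_p} - \e a_{i_1\ldots i_p}$, each lying in an interval of length $1$, with coefficients of magnitude $p!/N^{p-1}$ and at most $\binom{N}{p}$ summands; Hoeffding's lemma therefore bounds $\e \exp\{q'\beta R_N(\bm X)\}$ by $\exp\{ c_p (q'\beta)^2 \binom{N}{p}N^{-2(p-1)}\}$, and since $\binom{N}{p}N^{-2(p-1)} = O(N^{2-p}) = O(1)$ for $p \ge 2$, this is bounded uniformly. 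Averaging over $\bm X$ keeps it bounded, so the second term is $O(1)$, and combining the two gives $\limsup_N \e F_N(\beta) < \infty$.

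The argument itself is short; the point on which it hinges --- and where I expect the real substance to lie --- is that Lemma \ref{lm:logpartition_I} must be invoked not at $\beta$ but at the inflated parameter $q\beta$, so the scheme only closes because $\beta$ lies strictly below $\beta_{\mathrm{HSBM}}^*$, leaving a multiplicative margin in which to absorb the cost of detaching the fluctuation term $R_N$. The choice $q=1$ (splitting $\tilde H_N$ and $R_N$ by Jensen alone) fails because then $R_N$ is not a genuine perturbation, while $q=2$ (Cauchy--Schwarz) is too crude since $\cM_0$ may be supercritical at $2\beta$; the dimension-free concentration of $R_N$ supplied by Hoeffding's lemma is exactly what allows $q$ to be taken arbitrarily close to $1$. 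I would close by noting that this bound on $\e F_N(\beta)$, combined with the McDiarmid-type concentration in Lemma \ref{mcdiarmid}, upgrades to $F_N(\beta) = O(1)$ almost surely, which is \eqref{eq:FNbeta_H}.
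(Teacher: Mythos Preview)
Your proof is correct, and rests on the same two ingredients as the paper's---the decomposition $H_N=\tilde H_N+R_N$ and Hoeffding's lemma for the centered fluctuation $R_N$---but the paper arranges them more economically. Rather than applying H\"older in the $\bm X$-sum and then Jensen to the fluctuation term, the paper applies Jensen \emph{first}, writing $\e F_N(\beta)\le \log \e Z_N(\beta)$, and then exchanges the expectation over the HSBM with the finite sum over $\bm X$; Hoeffding's lemma on the independent centered summands of $R_N(\bm X)$ then gives, for each fixed $\bm X$,
\[
\e\, e^{\beta H_N(\bm X)} = e^{\beta \tilde H_N(\bm X)}\,\e\, e^{\beta R_N(\bm X)} \;\le\; e^{\beta \tilde H_N(\bm X)}\exp\!\left\{\frac{\beta^2 p!}{8N^{p-2}}\right\},
\]
so that $\e Z_N(\beta)\le e^{O(N^{2-p})}\tilde Z_N(\beta)$, and Lemma~\ref{lm:logpartition_I} is invoked at the parameter $\beta$ itself. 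Your H\"older step and the attendant inflation to $q\beta$ are thus unnecessary (though harmless); the reason your remark ``$q=1$ fails'' does not contradict this is that the paper's ordering---Jensen before the split, rather than after---sidesteps the degeneracy you identified.
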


\begin{proof} Fix $\beta < \beta_{\mathrm{HSBM}}^*$. Then the partition function $Z_N(\beta)$ becomes: 
	\begin{align*}
	Z_N(\beta) &= \frac{1}{2^N} \sum_{\bs \in \sa_N} e^{\frac{p! \beta}{N^{p-1}} \sum_{i_1<\cdots<i_p} A_{i_1\ldots i_p} X_{i_1}\cdots X_{i_p}}\\
	&= \frac{1}{2^N} \sum_{\bs \in \sa_N} e^{\frac{p! \beta}{N^{p-1}}  \sum_{i_1<\cdots<i_p} \left(A_{i_1\ldots i_p} - \e A_{i_1\ldots i_p}\right) X_{i_1}\cdots X_{i_p}}e^{\frac{p! \beta}{N^{p-1}}  \sum_{i_1<\cdots<i_p} \e A_{i_1\ldots i_p}  X_{i_1}\cdots X_{i_p}}.
	\end{align*}
	By Hoeffding's inequality, for each $\bs \in \sa_N$ and $i_1<\ldots<i_p$, 
	\begin{align*}
	\mathbb{E} \exp\left\{\frac{p! \beta}{N^{p-1}} \sum_{i_1<\cdots<i_p} \left(A_{i_1\ldots i_p} - \e A_{i_1\ldots i_p}\right) X_{i_1}\cdots X_{i_p}\right\}  
	&\leq \prod_{i_1<\cdots<i_p}\exp\left\{\frac{\beta^2(p!)^2}{8 N^{2p-2}}\right\}\\
	&\leq \exp\left\{\frac{\beta^2 p!}{8 N^{p-2}}\right\}.
	\end{align*}
	This shows that 
	$$\mathbb{E} Z_N(\beta) \leq \exp\left\{\frac{\beta^2 p!}{8 N^{p-2}}\right\} \left[\frac{1}{2^N} \sum_{\bs \in \sa_N}e^{\frac{p! \beta}{N^{p-1}}\sum_{i_1<\cdots<i_p} \e A_{i_1\ldots i_p} X_{i_1}\cdots X_{i_p}}\right] = \exp\left\{\frac{\beta^2 p!}{8 N^{p-2}}\right\}\tilde{Z}_N(\beta),$$
	where $\tilde{Z}_N$ is the partition function of the model $\mathcal M_0 $. Now, taking logarithms and using Lemma \ref{lm:logpartition_I} shows, $\limsup_{N \rightarrow \infty}\log \mathbb{E} Z_N(\beta) < \infty$, for $\beta < \beta_{\mathrm{HSBM}}^*$. Then, by Jensen's inequality, we conclude that $\limsup_{N \rightarrow \infty}\E F_N(\beta) < \infty$, for $\beta < \beta_{\mathrm{HSBM}}^*$, completing the proof of the lemma.  
\end{proof}

\section{Proofs from Section \ref{cltsec}}\label{sec:pfcwclt}

\subsection{Proof of Theorem \ref{thm:cwmplclt}}\label{sec:pf_cwmplclt}

Define the function $\phi_p: [-1,1]\mapsto (-\infty,\infty]$ as:  
\begin{align}\label{eq:2} 
\phi(t) = \phi_p(t) := 
\begin{cases}
p^{-1} t^{1-p}\tanh^{-1} (t) &\quad\text{if}~p~\textrm{is even and}~t \neq 0,\\
p^{-1} t^{1-p}\tanh^{-1} (t) &\quad\text{if}~p~\textrm{is odd and}~t > 0.\\
\infty &\quad\text{if}~p~\textrm{is odd and}~t < 0.\\
0 &\quad\text{if}~t = 0.\\
\end{cases}
\end{align}
Note that for every $t \neq 0$ when $p$ is even, and every $t > 0$ when $p$ is odd, the function $\vp$ is twice differentiable on some compact set containing $t$ in its interior, and $$\vp'(t) = -\frac{1}{pt^{p-1}}\left\{\frac{(p-1)\tanh^{-1}(t)}{t} -\frac{1}{1-t^2}\right\}.$$
Hence,  recalling the definition of the function $g$ from the statement of the theorem we have, 
\begin{equation}\label{derexp}
\vp'(m_*) = -\frac{g''(m_*)}{pm_*^{p-1}}.
\end{equation}
Moreover, from the definition of the MPLE in \eqref{mple} it is easy to see that in the Curie-Weiss model $\hat{\beta}_N(\bm X) = \phi(\cs)$. Note that $\vp(m_*) = \beta$, since $g'(m_*) = 0$. This implies, 
\begin{equation}\label{delstep}
\sqrt{N}(\hat{\beta}_N(\bm X) - \beta) = \sqrt{N}\left(\vp(\cs) - \vp(m_*)\right).
\end{equation}
We now consider the case $p$ is even and $p$ is odd separately. Throughout, we assume $\beta > \beta_{\mathrm{CW}}^*$: 
\begin{itemize}

\item {\it $p \geq 3$ is odd}: In this case, $m_*$ is the unique global maximizer of the function  $g$ on $[-1, 1]$. This implies, by \cite[Theorem 2.1~(1)]{mlepaper}, 
$$\sqrt{N}(\bar X_N - m_*) \dto N\left(0, -\frac{1}{g''(m_*)}\right). $$
Hence, by \eqref{delstep} and the delta method \cite[Theorem 1.8.12]{tpe}, 
$$\sqrt{N}(\hat{\beta}_N(\bm X) -\beta) ) \xrightarrow{D} N\left(0, -\frac{(\phi'(m_*))^2}{g''(m_*)}\right) \stackrel{D}= N\left(0, -\frac{g''(m_*)}{p^2m_*^{2p-2} }\right),$$
where the last step uses \eqref{derexp}. This completes the proof when $p$ is odd. 

\item {\it $p \geq 2$ is even}: In this case, the function  $g$ has two (non-zero) global maximizers on $[-1, 1]$, which are given by $m_*$ and $-m_*$ (as shown in \cite[Section C]{mlepaper}). This implies, by \cite[Theorem 2.1~(2)]{mlepaper} and the delta method, 
\begin{align}\label{eq:curie_weiss_mple_I}
\sqrt{N}(\hat{\beta}_N(\bm X) -\beta ) \Big| \{\cs \in (0, 1] \} \xrightarrow{D} N\left(0, -\frac{g''(m_*)}{p^2m_*^{2p-2} }\right). 
\end{align}
Similarly, observing that $\phi(m_*)=-\phi(-m_*)$ and $g''(m_*)=g''(-m_*)$, 
\begin{align}\label{eq:curie_weiss_mple_II}
\sqrt{N}(\hat{\beta}_N(\bm X) -\beta ) \Big| \{ \cs \in [-1, 0] \} \xrightarrow{D} N\left(0, -\frac{g''(m_*)}{p^2m_*^{2p-2} }\right). 
\end{align}
Combining \eqref{eq:curie_weiss_mple_I} and \eqref{eq:curie_weiss_mple_II}, gives the desired result when $p$ is even.

\end{itemize}

\subsection{Proof of Theorem \ref{thm:cw_threshold}}\label{sec:pf_cw_threshold} Fix $\beta = \beta_{\mathrm{CW}}^*(p)$.  We now consider the three cases in Theorem \ref{thm:cw_threshold} separately. 

\noindent{\it Proof of }(1): In this case, $p=2$ and, hence, by \eqref{eq:2}, $\hat{\beta}_N = \frac{\tanh^{-1} (\cs)}{2\cs}  \bm 1 \{\cs \neq 0\}$. Therefore, on the event $\cE_N:=\{\cs\neq 0, |\cs|< \frac{1}{2}\}$, 
	\begin{align}\label{inf1}
	N^\frac{1}{2}\left(\hat{\beta}_N -\frac{1}{2}\right) &= N^\frac{1}{2} \left(\frac{\tanh^{-1} (\cs)}{2\cs} - \frac{1}{2}\right)\nonumber\\ &= \tfrac{1}{2} N^\frac{1}{2}\sum_{s=1}^\infty \frac{\cs^{2s}}{2s+1} = \tfrac{1}{6} N^\frac{1}{2} \cs^2 + N^\frac{1}{2} O(\cs^4). 
	\end{align}
From \cite[Proposition 4.1]{comets} we know that $N^\frac{1}{4} \cs \xrightarrow{D} F$, where $F$ is as defined in the statement of Theorem \ref{thm:cw_threshold}. The result in \eqref{eq:threshold_F} now follows from \eqref{inf1}, and the observation that $\p_\beta(\cE_N^c) = o(1)$, since $\cs \pto 0$ and $\p_\beta(\cs =0 )=o(1)$ (from the proof of \cite[Lemma C.6]{mlepaper}). \\ 
	
\noindent{\it Proof of }(2): Assume $p \geq 3$. To begin with, define the the three intervals $A_1 := [-1,~-\frac{m_*}{2}]$ and $A_2 := (-\frac{m_*}{2},~\frac{m_*}{2})$ and $A_3 := [\frac{m_*}{2},~1]$ (recall that $m_*=m_*(p, \beta)$ is the unique positive maximizer of the function $g(t) := \beta t^p - I(t)$. We now consider the following two cases depending on whether $p \geq 3$ is odd or even: 
	
\begin{itemize}
\item[$\bullet$] {\it $p \geq 4$ is even:}  In this case, the function  $g$ has three global maximizers on $[-1, 1]$, which are given by $m_* > 0$ and $-m_*$, and $0$. Then, by \cite[Theorem 2.1 (2)]{mlepaper} and arguments as in \eqref{eq:curie_weiss_mple_I}, we have 
\begin{equation}\label{evp}
\sqrt{N}(\hat{\beta}_N(\bm X) - \beta)\big|\{\cs \in A_i\} \xrightarrow{D} N\left(0, -\frac{g''(m_*)}{p^2m_*^{2p-2}}\right), 
\end{equation}
for $i \in \{1, 3\}$. Next, recalling $\beta_N(\bm X) =\phi(\bar X_N)$, where $\phi(\cdot)$ as in \eqref{eq:2}, note that 
\begin{equation}\label{evp2}
\hat{\beta}_N(\bm X)\big|\{\cs \in A_2 \backslash \{0\}\} \xrightarrow{D} \infty,
\end{equation}
since $\frac{\tanh^{-1}(t)}{t^{p-1}} \rightarrow \infty$, as $|t| \rightarrow 0$. Now, since $\p (\cs=0) = o(1)$ and 
$\p_\beta(\cs \in A_2) \rightarrow \alpha$ (by \cite[Theorem 2.1 (2)]{mlepaper}), combining \eqref{evp} and \eqref{evp2} the result in \eqref{eq:threshold_mple} follows, if $p \geq 4$ is even.  

\item[$\bullet$] {\it $p \geq 3$ is odd:}   In this case, the function  $g$ has two global maximizers on $[-1, 1]$ one of which is non-positive and the other is $m_*> 0$. Then, by similar arguments as above, 
\begin{equation}\label{odp}
\sqrt{N}(\hat{\beta}_N(\bm X) - \beta)\big|\{\cs \in A_3\} \xrightarrow{D} N\left(0, -\frac{g''(m_*)}{p^2m_*^{2p-2}}\right). 
\end{equation}
Moreover, recalling \eqref{eq:2}, 
\begin{equation}\label{odp1}
\hat{\beta}_N(\bm X)\big|\{\cs \in \left(A_1\bigcup A_2 \right) \backslash \{0\}\} \xrightarrow{D} \infty.
\end{equation}
The result in \eqref{eq:threshold_mple} now follows from \eqref{odp}, \eqref{odp1}, and the fact that $\p_\beta(\cs \in A_1\bigcup A_2) \rightarrow \alpha$, if $p$ is odd (by \cite[Theorem 2.1 (2)]{mlepaper}).  \\ 
\end{itemize} 

\noindent{\it Proof of }(3): Here, we prove the finer asymptotics of $\hat{\beta}_N(\bm X)$. For this, note 
by \cite[Theorem 2.1 (2)]{mlepaper} that $\sqrt{N}\cs \big|\{\cs \in \cB\} \xrightarrow{D} N(0,1)$ for any interval $\cB$ containing $0$, but no other maximizer of $g$. We now consider the following two cases: 
\begin{itemize}
	\item [--] {\it $p\geq 4$ is even:} In this case, $\sqrt{N}\cs \big|\{\cs \in A_2\} \xrightarrow{D} N(0,1)$. Then, \eqref{eq:2} gives, 
\begin{equation*}
N^{1-\frac{p}{2}}\hat{\beta}_N(\bm X) = \frac{1}{p(\sqrt{N}\cs)^{p-2}}\cdot \frac{\tanh^{-1}(\cs)}{\cs} \bm 1\{\cs \neq 0\}.
\end{equation*}
Now, since $\frac{\tanh^{-1}(\cs)}{\cs} \bm 1 \{\cs \neq 0\} \big|\{\cs \in A_2\setminus\{0\} \} \xrightarrow{P} 1$, we have,   
	\begin{equation}\label{eq:threshold_II}
	N^{1-\frac{p}{2}} \hat{\beta}_N(\bm X) \big|\{\cs \in A_2\setminus \{0\} \} \xrightarrow{D} \frac{1}{p Z^{p-2}}, 
	\end{equation} 
	where $Z \sim N(0, 1)$. The result in now \eqref{eq:threshold_mple_I} follows from \eqref{eq:threshold_II}, by noting $\p (\cs=0) = o(1)$, that $\p_\beta(\cs \in A_2) \rightarrow \alpha$ and $N^{1-\frac{p}{2}} \hat{\beta}_N(\bm X) \big|\{\cs \in A_1\bigcup A_3\} \xrightarrow{P} 0$ (by  \eqref{evp}).

	\item[--] {\it $p\geq 3$ is odd:} In this case, since the function  $g$ has two global maximizers on $[-1, 1]$ one of which is non-positive and the other is $m_*> 0$, $\sqrt{N}\cs |\{\cs \in A_1\bigcup A_2\} \xrightarrow{D} N(0,1)$. Then, \eqref{eq:2} gives, 
\begin{equation}\label{eq:beta_p_mple}
N^{1-\frac{p}{2}}\hat{\beta}_N(\bm X) = \frac{1}{p(\sqrt{N}\cs)^{p-2}}\cdot \frac{\tanh^{-1}(\cs)}{\cs} \bm 1\{\cs > 0\} + \infty \bm 1\{\cs < 0\},
\end{equation}
where we adopt the convention infinity times zero is zero. It follows from \eqref{eq:beta_p_mple}, that $N^{1-p/2}\hat{\beta}_N(\bm X)$ is a non-negative random variable. Hence, denoting $\cA_{12}:=\{\cs \in A_1\bigcup A_2\}$ and taking $t \in [0,\infty)$ gives 
\begin{align*}
\p\left(N^{1-\frac{p}{2}} \hat{\beta}_N(\bm X)\leq t \big|\cA_{12} \right) 
&= \p\left(N^{1-\frac{p}{2}} \hat{\beta}_N(\bm X)\leq t,~\cs > 0 \big|\cA_{12}  \right) + o(1), 
\end{align*}
since $\p_\beta(\cs = 0| \cA_{12}) = o(1)$. Now, note that 
\begin{align*}
& \p\left( N^{1-\frac{p}{2}} \hat{\beta}_N(\bm X) \leq t,~\cs > 0 \big|\cA_{12}  \right) \nonumber \\ 
&= \p\left( \frac{1}{p(\sqrt{N}\cs)^{p-2}}\cdot \frac{\tanh^{-1}(\cs)}{\cs} \leq t,~\cs \neq 0 \big| \cA_{12}  \right) - \p_\beta(\cs < 0 \big| \cA_{12}) ,
\end{align*}
since on the the event $\{\cs < 0\}$, $\frac{1}{p(\sqrt{N}\cs)^{p-2}}\cdot \frac{\tanh^{-1}(\cs)}{\cs} < 0 \leq t$. Note that $\p_\beta(\cs < 0 \big| \cA_{12}) \rightarrow \frac{1}{2}$, since $\sqrt{N}\cs | \cA_{12}  \xrightarrow{D} N(0,1)$. Then, by arguments as in \eqref{eq:threshold_II}, it follows that 
\begin{align*}
\p\left( N^{1-\frac{p}{2}} \hat{\beta}_N(\bm X) \leq t \big|\cA_{12}  \right)  \rightarrow  \p\left(\frac{1}{pZ^{p-2}} \leq t\right) -\frac{1}{2} 
& = \frac{1}{2}\p\left(\frac{1}{p|Z|^{p-2}} \leq t\right).
\end{align*}
Therefore, by \eqref{eq:beta_p_mple},
	\begin{equation}\label{eq:threshold_p}
	N^{1-\frac{p}{2}} \hat{\beta}_N(\bm X) \big|\cA_{12} \xrightarrow{D} \frac{1}{2} \left(\frac{1}{p |Z|^{p-2}}\right) +\frac{1}{2}\delta_\infty.
	\end{equation}
The result in \eqref{eq:threshold_mple_II} follows now from \eqref{eq:threshold_p}, by noting that $\p_\beta(\cs \in A_1\bigcup A_2) \rightarrow \alpha$ and $N^{1-\frac{p}{2}} \hat{\beta}_N(\bm X) \big|\{\cs \in A_3\} \xrightarrow{P} 0$ (by \eqref{odp}). 
\end{itemize}

\appendix

\section{Properties of the Curie-Weiss Threshold} 
\label{cwtp}

Here, we will prove various properties of the Curie-Weiss threshold $\bw=\beta_{\mathrm{ER}}^*(p, 1)$ (recall \eqref{hypergraph_random}). 

\begin{lem} The Curie-Weiss threshold $\bw$ has the following properties: 
	\begin{itemize}
		\item[(1)] $\lim_{p \rightarrow \infty} \bw = \log 2$. 
		
		\item[(2)] The sequence $\{\bw\}_{p\geq 2}$ is strictly increasing. 
		
		\item[(3)]  $\beta_{\mathrm{CW}}^*(2) = 0.5$. 
	\end{itemize} 
\end{lem}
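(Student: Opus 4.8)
The plan is to reduce all three statements to the analysis of a single one–variable infimum. Since $I(0)=0$, the point $t=0$ always makes $\beta t^{p}-I(t)=0$, so $\sup_{t\in[0,1]}\{\beta t^{p}-I(t)\}\ge 0$, with equality iff $\beta t^{p}\le I(t)$ for every $t\in(0,1]$; recalling \eqref{hypergraph_random} this yields the clean formula
\[
\beta_{\mathrm{CW}}^*(p)=\inf_{t\in(0,1]}\frac{I(t)}{t^{p}}.
\]
I would then record the elementary facts about the binary entropy function that are needed: $I$ is even, strictly convex and strictly increasing on $[0,1]$, with $I(0)=0$, $I(1)=\log 2$, $I(t)\ge t^{2}/2$ (already used elsewhere in the paper), $I(t)=\tfrac{t^{2}}{2}+o(t^{2})$ as $t\to 0$ (a two–term Taylor expansion in which the odd–order terms cancel), and $I'(t)=\tanh^{-1}(t)\to\infty$ as $t\to 1^{-}$.

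Part (3) is then immediate: with $p=2$, the inequality $I(t)\ge t^{2}/2$ gives $I(t)/t^{2}\ge 1/2$ on $(0,1]$, while the Taylor expansion gives $I(t)/t^{2}\to 1/2$ as $t\to 0^{+}$, so the infimum equals $1/2$. Before attacking (1) and (2) I would establish the key preliminary that $\beta_{\mathrm{CW}}^*(p)<\log 2$ for every $p\ge 2$: setting $h_{p}(t):=t^{p}\log 2-I(t)$ one has $h_{p}(1)=0$ and $h_{p}'(t)=p t^{p-1}\log 2-\tanh^{-1}(t)\to-\infty$ as $t\to 1^{-}$, so $h_{p}>0$ just to the left of $1$, forcing $I(t)/t^{p}<\log 2$ there and hence $\inf_{(0,1]}I(t)/t^{p}<\log 2$. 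A consequence I would extract is that for $p\ge 3$ the infimum is attained at some interior point $t_{p}\in(0,1)$: it is approached in the interior because $I(t)/t^{p}\sim \tfrac12 t^{2-p}\to\infty$ as $t\to 0^{+}$, and it is not attained at $t=1$ since that would give the value $\log 2$.

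For (2), fix $q>p\ge 2$. For every $t\in(0,1)$ we have $I(t)/t^{q}>I(t)/t^{p}\ge \beta_{\mathrm{CW}}^*(p)$, and at $t=1$, $I(1)/1^{q}=\log 2>\beta_{\mathrm{CW}}^*(p)$ by the preliminary; thus $I(t)/t^{q}>\beta_{\mathrm{CW}}^*(p)$ for all $t\in(0,1]$. Since $q\ge 3$ the infimum defining $\beta_{\mathrm{CW}}^*(q)$ is attained at some $t_{q}\in(0,1)$, so $\beta_{\mathrm{CW}}^*(q)=I(t_{q})/t_{q}^{q}>\beta_{\mathrm{CW}}^*(p)$; applying this to consecutive exponents gives strict monotonicity. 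For (1), the upper bound $\beta_{\mathrm{CW}}^*(p)\le I(1)/1^{p}=\log 2$ is trivial. For the matching lower bound, fix $\epsilon>0$ and pick $\delta\in(0,1)$ with $I(1-\delta)>\log 2-\epsilon$ (possible since $I$ is continuous with $I(1)=\log 2$); then on $[1-\delta,1]$, $I(t)/t^{p}\ge I(t)\ge I(1-\delta)>\log 2-\epsilon$ because $t^{p}\le 1$ and $I$ is increasing, while on $(0,1-\delta]$, $I(t)/t^{p}\ge \tfrac12 t^{2-p}\ge \tfrac12(1-\delta)^{-(p-2)}$, which exceeds $\log 2$ once $p$ is large. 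Hence $\beta_{\mathrm{CW}}^*(p)\ge\log 2-\epsilon$ for all large $p$, and combined with the upper bound this gives $\lim_{p\to\infty}\beta_{\mathrm{CW}}^*(p)=\log 2$.

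I expect the only subtle point to be the preliminary inequality $\beta_{\mathrm{CW}}^*(p)<\log 2$—equivalently, that for $p\ge 3$ the minimizing $t_{p}$ lies strictly inside $(0,1)$—since this is precisely what upgrades the trivial monotonicity $\beta_{\mathrm{CW}}^*(q)\ge\beta_{\mathrm{CW}}^*(p)$ to a strict one and also underpins the lower bound in (1). It rests on the endpoint blow-up $I'(t)\to\infty$ as $t\to 1^{-}$ dominating the finite slope $pt^{p-1}\log 2$ of $t\mapsto t^{p}\log 2$; everything else is routine one-variable calculus.
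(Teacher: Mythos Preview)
Your proof is correct and takes a genuinely different route from the paper. The pivotal move is your reformulation $\beta_{\mathrm{CW}}^*(p)=\inf_{t\in(0,1]}I(t)/t^{p}$, which the paper never writes down; the paper instead works directly with $g_{\beta,p}(t)=\beta t^{p}-I(t)$ and argues each part via second-derivative calculus. For (1), the paper fixes $\beta<\log 2$, finds $r\in(0,1)$ with $g_{\beta,2}<0$ on $[r,1]$, and then uses $g_{\beta,p}''(t)\le\beta p(p-1)t^{p-2}-1$ together with $p(p-1)r^{p-2}\to 0$ to force concavity (hence nonpositivity) of $g_{\beta,p}$ on $[0,r]$ for large $p$; you reach the same limit more directly via the pointwise lower bound $I(t)/t^{p}\ge\tfrac12 t^{2-p}$ away from $1$. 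For (2), the paper \emph{imports} an external result (\cite[Lemma F.1]{mlepaper}) guaranteeing that at $\beta=\beta_{\mathrm{CW}}^*(p)$ there is an interior $a\in(0,1)$ with $g_{\beta_{\mathrm{CW}}^*(p),p}(a)=0$, and then bumps $p$ down by one; your argument is self-contained, obtaining the interior attainment from the endpoint blow-up $I'(t)=\tanh^{-1}(t)\to\infty$ near $1$. For (3), the paper again uses the second derivative, while you use $I(t)\ge t^{2}/2$ together with the Taylor expansion. The payoff of your approach is that it avoids any external citation and reduces everything to monotonicity of $t\mapsto I(t)/t^{p}$ in $p$ plus simple endpoint behavior; the paper's approach, by contrast, keeps the analysis in terms of $g_{\beta,p}$ and its convexity, which aligns more naturally with how the threshold is defined in \eqref{hypergraph_random} but at the cost of invoking an outside lemma for (2).
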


\begin{proof} Define the function $g_{\beta,p}(t) := \beta t^p - I(t).$ Since $g_{\beta,p}$(1)$ = \beta - \log 2$, recalling \eqref{hypergraph_random},  it immediately follows that $\bw \leq \log 2$. Now, take any $\beta < \log 2$. Note that $g_{\beta,2}$(1)$ < 0$ and the function $t \mapsto g_{\beta,2}(t)$ is continuous at $1$. Therefore, there exists $r \in (0,1)$, such that $g_{\beta,2}(t) < 0$ for all $t \in [r,1]$. Clearly, $g_{\beta,p}(t) \leq g_{\beta,2}(t) <0$ for all $p\geq 2$ and $t \in [r,1]$. Now, note that for all $t \in [0,1)$,
	\begin{equation}\label{conx}
	g_{\beta,p}''(t) = \beta p (p-1) t^{p-2} - \frac{1}{1-t^2} \leq  \beta p (p-1) t^{p-2} - 1.
	\end{equation}
	Since $\lim_{p\rightarrow \infty} p(p-1) r^{p-2} = 0$, there exists $p(\beta) \geq 2$, such that $g_{\beta,p}''(r) < 0$ for all $p \geq p(\beta)$. Hence, $g_{\beta,p}''(t) < 0$ for all $t \in [0,r]$ and $p \geq p(\beta)$. This, together with the fact that $g_{\beta,p}'(0) = 0$, implies that $g_{\beta,p}$ is strictly decreasing on $[0,r]$ for all $p \geq p(\beta)$. Moreover, because $g_{\beta,p}(0) = 0$, it follows that $g_{\beta,p}(t) \leq 0$ for all $t \in [0,r]$ and $p \geq p(\beta)$. Hence, $g_{\beta,p}(t) \leq 0$ for all $t \in [0,1]$ and $p \geq p(\beta)$, i.e. $\bw \geq \beta$ for all $p \geq p(\beta)$. This shows that $\bw \rightarrow \log 2$, as $p \rightarrow \infty$.
	
	Next, we show that the sequence $\{\bw\}_{p\geq 2}$ is strictly increasing. Towards this, take any $p\geq 3$. It follows from \cite[Lemma F.1]{mlepaper}, that there exists $a \in (0,1)$, such that $0$ and $a$ are both global maximizers of $g_{\bw,p}$. In particular, $g_{\bw,p}(a) = 0$, and hence, $g_{\bw,p-1}(a) > 0$. The function $\beta \mapsto g_{\beta,p-1}(a)$ being continuous, there exists $\beta < \bw$, such that $g_{\beta,p-1}(a) > 0$. Hence, $\beta_{\mathrm{CW}}^*(p-1) \leq \beta$, establishing that $\beta_{\mathrm{CW}}^*(p-1) < \bw$.
	
	Finally, we show that $\beta_{\mathrm{CW}}^*(2) = 0.5$. By \eqref{conx}, $\sup_{t\in [0,1]} g_{\beta,p}''(t) <0$ for all $\beta < 0.5$. This, coupled with the facts that $g_{\beta,p}'$ and $g_{\beta,p}$ vanish at $0$, implies that $\sup_{t \in [0,1]} g_{\beta,p}(t) = 0$ for all $\beta < 0.5$. Hence, $\beta_{\mathrm{CW}}^*(2) \geq 0.5$. On the other hand, for any $\beta > 0.5$, $g_{\beta,p}''(0) = 2\beta - 1 > 0$ and hence, by continuity of the function $g_{\beta,p}''$ at $0$, there exists $\varepsilon > 0$, such that $\inf_{t\in [0,\varepsilon]} g_{\beta,p}''(t) > 0$. Once again, since $g_{\beta,p}'$ and $g_{\beta,p}$ vanish at $0$,	we have $g_{\beta,p}(t) > 0$ for all $t \in (0,\varepsilon]$. This shows that $\beta_{\mathrm{CW}}^*(2) \leq 0.5$. 
\end{proof}

\end{document}